\title[Periodic orbits in the Brusselator system]{Computer-assisted validation of the existence of periodic orbits in the Brusselator system}
\author[Jakub Bana\'{s}kiewicz, Piotr Kalita, Piotr Zgliczyński]{Jakub Bana\'{s}kiewicz$^{\dagger,\star}$, Piotr Kalita$^\star$, Piotr Zgliczyński$^\star$}
\address{$^\dagger$Faculty of Mathematics and Computer Science, Jagiellonian University, ul. Łojasiewicza 6, 30-348, Kraków, Poland}
\address{$^\star$AGH University of Science and Technology, al. Mickiewicza 30, 30-059 Kraków, Poland}
\email{banaskiewicz@agh.edu.pl, piotr.kalita@ii.uj.edu.pl, umzglicz@cyf-kr.edu.pl}
\thanks{The work of all three authors  was supported by National Science Center (NCN) of Poland under project No. UMO-2016/22/A/ST1/00077.  The research of JB for this publication has
	been supported by
	a grant from the Antropocen Priority Research Area under the Strategic
	Programme Excellence Initiative at
	Jagiellonian University. The work of PK was also partially supported by NCN of Poland under project No. DEC-2017/25/B/ST1/00302 and by Ministerio de Ciencia e Innovación of Kingdom of Spain under project No.
	 PID2021-122991NB-C21.}
\date{\today}
\newcommand{\sgn}{\text{sgn}}
\newcommand{\norm}[1]{\left\|{#1}\right\|}
\newcommand{\skalarProduct}[2]{\left<{#1},{#2} \right>}
\newcommand{\set}[1]{\{{#1}\}}
\newcommand{\R}[0]{\mathbb{R}}
\newtheorem{theorem}{Theorem}[section]
\newtheorem{lemma}[theorem]{Lemma}
\newtheorem{proposition}{Proposition}[section]
\theoremstyle{definition}
\newtheorem{definition}[proposition]{Definition}
\newtheorem{remark}[proposition]{Remark}
\numberwithin{equation}{section}
\begin{document}
\begin{abstract}
    We investigate the Brusselator system with diffusion  and Dirichlet boundary conditions on one dimensional space interval. Our proof demonstrates that, for certain parameter values, a periodic orbit exists. This proof is computer-assisted and rooted in the rigorous integration of partial differential equations. Additionally, we present the evidence of the occurrence of period-doubling bifurcation.
\end{abstract}
\maketitle
\input{}

\section{Introduction}
In this paper, we study the dynamics of the Brusselator system with diffusion, described by the following initial and boundary value problem consisting of two mutually coupled partial differential equations.
\begin{equation}\label{eq:BrusselatorPDE}
\begin{cases}
 u_t = d_1  u_{xx}- (B+1)u +u^2v + A \sin(x) \;  \text{for}\; (x,t)\in (0,\pi)\times(0,\infty),\\
 v_t = d_2 v_{xx} + Bu - u^2v  \;  \text{for}\; (x,t)\in (0,\pi)\times(0,\infty),\\
u(t,x)= v(t,x) = 0\; \text{for}\; (x,t)\in \{0,\pi\} \times (0,\infty),
\\u(0,x) = u^0(x),\ v(0,x) = v^0(x)\ \textrm{for}\ x\in(0,\pi).
\end{cases}
\end{equation}
The system models autocatalytic reactions in form
\begin{align*}
    &A \rightarrow u,
    \\&2u + v \rightarrow 3u,
     \\&B + u \rightarrow v + D,
     \\&u \rightarrow E.
\end{align*}
The coefficients $A,B$ in the system are given and they correspond to densities of the substances $A$ and $B$ in the above reactions. The unknown functions $u(t,x)$ and $v(t,x)$ describe the densities of two substances which we also denote by $u$ and $v.$ The system \eqref{eq:BrusselatorPDE} is an extension of the planar Brusselator ODE \eqref{eq:BrusselatorODE} where the substances $u$ and $v$ are homogeneously spread in the domain.
The terms $u_{xx}$ and $v_{xx}$ represent the diffusion of substances. The coefficients
$d_1,d_2$ are the corresponding diffusion rates.

If we drop the diffusion and the dependence on the variable $x$ in the term $A\sin(x)$ from the system \eqref{eq:BrusselatorPDE}, we obtain the following planar ODE
\begin{equation}\label{eq:BrusselatorODE}
	\begin{cases}
		u' = - (B+1)u +u^2v + A  \ \   \text{for}\ \  t\in \R,\\
		v' = Bu - u^2v  \ \   \text{for}\ \  t\in \R.
	\end{cases}
\end{equation}
The planarity of the above system implies that the invariant sets consist of fixed points, periodic orbits, and heteroclinic connections between them. In fact, it is known that in \eqref{eq:BrusselatorODE}, there can exist an attracting periodic orbit that arises via the Hopf bifurcation \cite[Theorem 3]{HopfBifurcationGeneralBrusselator}.
The analytical results about the Brusselator system of PDEs with diffusion are limited.
In the article \cite{YouBrusselatorAttractor} the existence of the global attractor for the Brusselator system on the 3-dimensional domain is proved. While this global attractor is known to exist, the question about its structure, which pertains to the understanding of the problem dynamics, remains unanswered. Some partial analytical results about the dynamics are available for Neumann boundary conditions, where the homogeneous steady state is known from the solutions of the corresponding ODE \eqref{eq:BrusselatorODE}. In this case, one can linearize the system of PDEs in its vicinity. Such results are available for example in \cite{BROWN19951713} and \cite{StabilityPeñaBegoña}.
The case with Dirichlet conditions, which we consider, appears to be much more challenging. In \cite{AUCHMUTY1975323}, the stability analysis of the steady state was carried out for Dirichlet non-homogeneous conditions. In such case there exists a nonzero homogeneous steady state. However, this type of analysis is not possible for the problem we are dealing with, as the homogeneous steady state does not exist (it would have to be equal to zero). Based on numerical observations, the system  \eqref{eq:BrusselatorPDE} possesses a periodic orbit for some range of parameters $d_1, d_2, A, B$. We anticipate that this periodic orbit arises from a mechanism similar to the one known for the planar ODE \eqref{eq:BrusselatorODE}, namely through a Hopf bifurcation.

We deal with the apparent impossibility of obtaining purely analytical results on the periodic orbit existence by using the computer assisted techniques.

Specifically, we perform a computer-assisted proof of the following theorem.
\begin{theorem} \label{th:BrusselatorPeriodicOrbit}
For the parameters $d_1 = 0.2$, $d_2 = 0.02$, $A = 1$, and $B= 2$, the Brusselator system has a time-periodic orbit.
\end{theorem}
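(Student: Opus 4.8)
The goal is to prove the existence of a time-periodic orbit for the PDE system by constructing a rigorous computer-assisted proof. The natural strategy is to reduce the problem of finding a periodic orbit to finding a fixed point of a suitable Poincar\'e map and to apply a topological fixed-point argument on an explicitly constructed set.

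\medskip

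\emph{Plan.} First I would recast \eqref{eq:BrusselatorPDE} as an abstract evolution equation on a Hilbert space of functions satisfying the Dirichlet boundary conditions, and expand the unknowns $u$ and $v$ in the sine basis $\{\sin(kx)\}_{k\ge 1}$ that diagonalizes the Laplacian with these boundary conditions. This turns the PDE into an infinite system of ODEs for the Fourier coefficients, in which the linear diffusion terms become the diagonal factors $-d_1 k^2$ and $-d_2 k^2$, while the polynomial nonlinearities $u^2 v$ and the forcing $A\sin(x)$ become explicit (quadratic and cubic) convolution-type couplings among the modes. The strong dissipativity of the high modes, governed by these diagonal decay rates, is what makes rigorous integration tractable: one splits the phase space into a finite-dimensional \emph{Galerkin} projection carrying the essential dynamics and an infinite-dimensional \emph{tail} whose influence is controlled by rigorous a priori bounds.

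\medskip

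Next I would fix a transversal section $\Sigma$ near the numerically observed periodic orbit and define the associated Poincar\'e return map $P$. Using rigorous integration of the PDE -- that is, interval-arithmetic enclosures of the flow on the finite Galerkin part combined with analytic self-consistent bounds on the tail (in the spirit of the $C^1$-algorithms for dissipative PDEs) -- I would propagate an explicitly chosen set $N\subset\Sigma$ (for instance a product of a finite-dimensional box with a decaying geometric tail) forward for one return. The aim is to verify rigorously that $P(N)\subset N$, or more robustly to verify the hypotheses of a topological fixed-point theorem such as Brouwer's theorem or a covering/Conley-index argument; the existence of a fixed point of $P$ then yields a periodic orbit of \eqref{eq:BrusselatorPDE}. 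Establishing transversality of $\Sigma$ and that trajectories starting in $N$ do return to $\Sigma$ in bounded time, with enclosures tight enough to close the argument, is part of this step.

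\medskip

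The main obstacle I anticipate is the rigorous control of the tail over a full return time: the nonlinear couplings feed energy from the low modes into arbitrarily high ones, so the self-consistent bounds must be chosen to absorb the nonlinear convolution sums while remaining compatible with the diffusive decay, and they must stay sharp enough that the accumulated interval-arithmetic overestimation (the wrapping effect) does not destroy the inclusion $P(N)\subset N$. This requires carefully tuned a priori estimates on the polynomial nonlinearities in a suitable weighted norm, a sufficiently high Galerkin truncation dimension, and a Lohner-type representation of the enclosures to mitigate error growth during the long integration. Once these rigorous bounds are in place and the numerics confirm the enclosures are consistent, the topological fixed-point step is essentially automatic, and Theorem~\ref{th:BrusselatorPeriodicOrbit} follows.
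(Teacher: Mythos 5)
Your overall strategy coincides with the paper's: expand $u,v$ in the sine basis that diagonalizes the Dirichlet Laplacian, split the phase space into a finite-dimensional part and an infinite tail controlled by rigorous decay bounds, integrate rigorously (differential inclusion on the low modes, dissipative a priori bounds on the tail) to compute a Poincar\'e map $\mathcal{P}$ on a section near the numerically found orbit, and verify a self-mapping inclusion of an explicit set. This is exactly the structure of Sections \ref{sec:AlgorithmIntegration} and \ref{sec:comAssisedProof}.

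There is, however, a genuine gap at the fixed-point step, which you describe as ``essentially automatic'': you propose to conclude via Brouwer's theorem (or a covering argument), but your set $N$ --- a finite-dimensional box times a decaying tail --- is an infinite-dimensional subset of the phase space $Y=C_0\times C_0$, and Brouwer's theorem does not apply there. The paper instead uses the Schauder fixed point theorem (Theorem \ref{th:PoincareFixedPoint}), and this forces two verifications that are not automatic: (i) the initial set $X^0$ must be compact and convex in $Y$ --- compactness holds precisely because the polynomial tail decay $|u_k|,|v_k|\leq |k|^{-5}$ for $k>59$ makes $X^0$ closed in $Y$ and bounded in $H^1_0\times H^1_0$, which embeds compactly into $C_0$ in one space dimension; and (ii) the Poincar\'e map must be continuous on $X^0$ in the $Y$-norm, which the paper obtains from the continuity of the crossing time (Lemma \ref{lem:cross}) under the transversality condition $\sum_{i}\alpha_i F_i(x)>0$ on the enclosure of $\varphi([t,t+\tau],X^0)$. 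An alternative repair --- Brouwer on each Galerkin projection plus a passage to the limit --- is the route of the earlier Kuramoto--Sivashinsky papers, from which the present paper explicitly departs; your sketch mentions neither that limit argument nor the compactness needed for Schauder, so as written the topological step would fail. Two minor further discrepancies: the paper's tail bounds are polynomial rather than geometric (its convolution algebra in Section \ref{sec:algebra} is built for polynomial decay, and the compatibility discussion around the logistic example \eqref{eq:logisticEquation} shows the admissible decay rate is a delicate issue for Dirichlet problems), and no $C^1$ (variational) integration is needed for existence --- the paper notes $C^1$ computations would be required only to prove the orbit attracting, which is left open.
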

\begin{figure}[h]
    \includegraphics[width=0.9\linewidth, height=6cm]
    {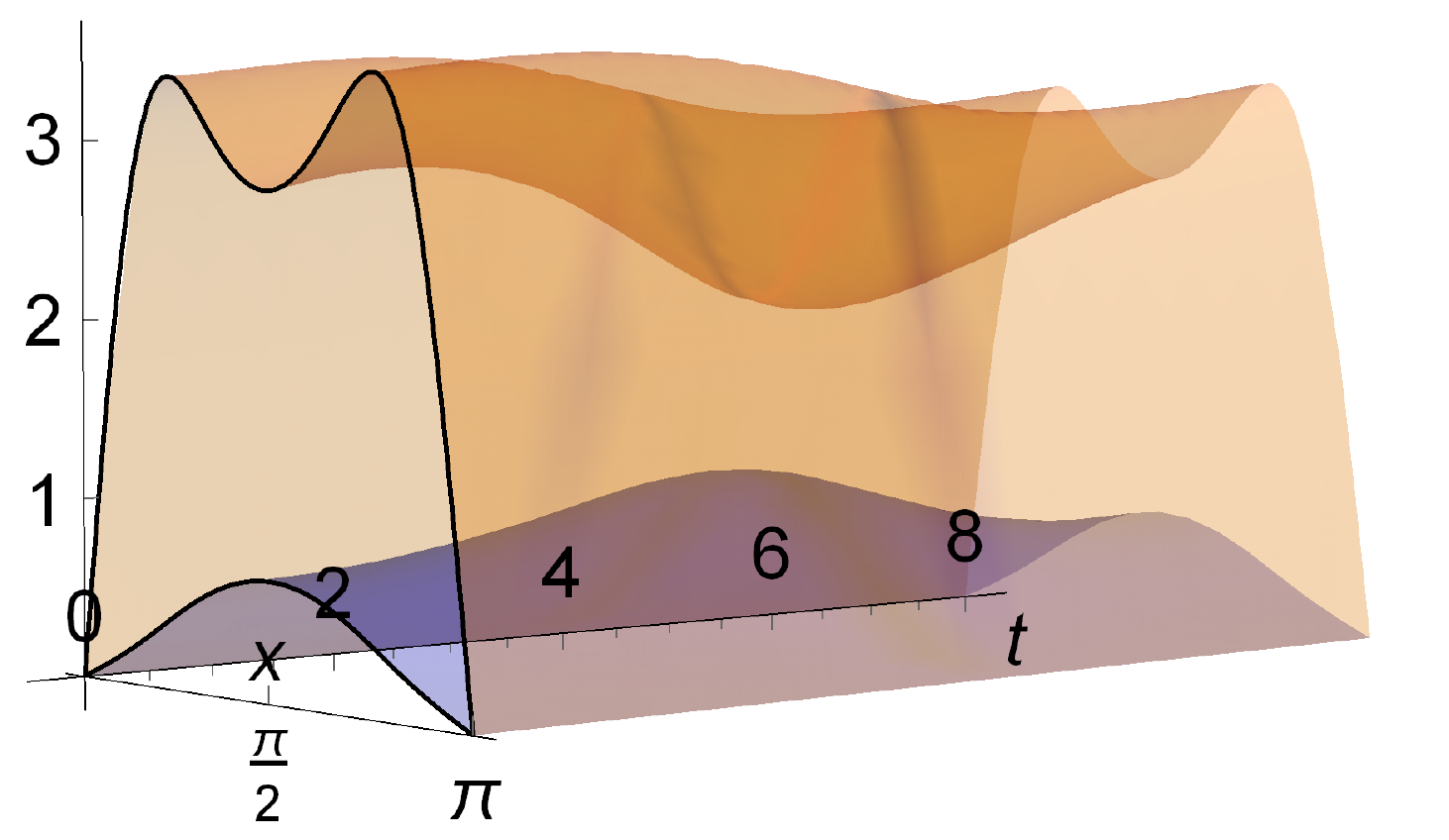}
\label{fig:image3}
\caption{The numerical approximation of the time-periodic orbit obtained in Theorem \ref{th:BrusselatorPeriodicOrbit}.
Blue and orange surfaces
correspond to plots of $u(t,x)$ and $v(t,x)$ respectively.
}
\end{figure}

It is worth noting that while numerical simulations suggest that this orbit from Theorem \ref{th:BrusselatorPeriodicOrbit} appears to be attracting, it has not been rigorously proven to be so.

The results on the existence of periodic orbit for the Brusselator were also obtained recently, together with the proof of the Hopf bifurcation, also using the computer assisted techniques, in the paper \cite{ArioliBrusselator}. The approach employed there is based on the Newton--Kantorovich method. The author demonstrates that a specific Newton-type operator has a fixed point, enabling him to establish the existence of a periodic orbit. This class of methods has been successfully applied for many problems governed by PDEs (see for example \cite{NSArioliHopf,NSLessBreden,EllipticSurvay,ElipticUnboundedPlum}).
The monograph \cite{ValidationBookPlum} contains detailed description and up to date overview of these methods together with numerous applications.
 In our  proof of Theorem \ref{th:BrusselatorPeriodicOrbit} we are using different method.  Namely, our approach   is based on the algorithm of rigorous forward integration of the dissipative systems.
This method was developed in articles  \cite{ZPKuramotoII,ZPKuramotoIII} and applied there for the Kuramoto--Shivasinski equation.
Similarly, as in the above works, we express the solution in terms of the Fourier series.
Specifically, as the solutions $u(t,x)$ and $v(t,x)$ satisfy the Dirichlet boundary conditions on $[0,\pi]$, they are represented as the sum of sine Fourier series
\begin{equation}\label{eq:decomposion}
u(t,x) = \sum_{i=1}^\infty u_i(t)\sin(i x),\quad
v(t,x) = \sum_{i=1}^\infty v_i(t)\sin(i x).
\end{equation}
The ability to work with all Fourier coefficients facilitates a straightforward and efficient integration of the Brusselator system. It is noteworthy, however, that in Theorem \ref{th:Arioli}, we obtain the same periodic orbit as in \cite{ArioliBrusselator}, cross-validating the accuracy of the two distinct methods.
Other approaches of representing the solution are also possible. In the work \cite{FEMPiotrek} the solution to the Burgers equation was represented by the first order finite element basis together with the estimates on the norms in the Sobolev spaces.

Note that the constant term $A$ from the ODE \eqref{eq:BrusselatorODE} has been replaced by the term $A\sin(x)$ in our PDE extension of the Brusselator system. In principle, we could consider the PDE version of the problem with the constant term $A$, that is the following system 
\begin{equation}\label{eq:BrusselatorPDEILL}
\begin{cases}
 u_t = d_1  u_{xx}- (B+1)u +u^2v + A  \;  \text{for}\; (x,t)\in (0,\pi)\times(0,\infty),\\
 v_t = d_2 v_{xx} + Bu - u^2v  \;  \text{for}\; (x,t)\in (0,\pi)\times(0,\infty),\\
u(t,x)= v(t,x) = 0\; \text{for}\; (x,t)\in \{0,\pi\} \times (0,\infty),
\\u(0,x) = u^0(x),\ v(0,x) = v^0(x)\ \textrm{for}\ x\in(0,\pi).
\end{cases}
\end{equation}
In such a case, we observe that $u_{xx}(t,x) = -\frac{A}{d_1}\not = 0$ for the boundary points  $x\in \{0,\pi\}.$ and for every $t\in\mathbb{R}^+,$ which means that compatibly conditions are not met. 
Let us represent $u$ in term of the sine Fourier series \eqref{eq:decomposion}. Assume that the series $\sum_{i=1}^\infty |u_i(t)|i^2$ is convergent.  Then, we can differentiate the Fourier series  twice, and we obtain 
\begin{equation*}
    u_{xx}(t,x) = \sum_{i=1}^\infty -u_i(t)i^2\sin(i x)
\end{equation*}
Therefore $u_{xx}(t,x) = 0$ for $x\in\{0,\pi\},$ which is not true, and hence, by contradiction, the series  $\sum_{i=1}^\infty |u_i(t)|i^2$ has to diverge.   Therefore, the Fourier series cannot converge to the solution fast. This unwelcome effect does not occur when we consider term $A\sin(x)$ instead of $A$ in the system \eqref{eq:BrusselatorPDE}, because the function $Asin(x)$ on $[0,\pi]$ is a restriction of a smooth, odd and $2\pi$ periodic function.

The novelty of the present paper is the proof of the periodic orbit existence for the Brusselator systems using the rigorous forward integration techniques. This integration algorithm is, according to our knowledge, applied by us for the first time for a system of PDEs: in our case two mutually coupled nonlinear parabolic PDEs with polynomials of order $3$ in the nonlinear term.  We  underline that the rigorous integration scheme which we use is the same as in  \cite{ZPKuramotoII, ZPKuramotoIII}, where it was used for the Kuramoto--Sivashinski equation with odd-periodic boundary conditions. We show its applicability for the problem with higher degree of nonlinearity: is our case the two equations of the system are coupled through the cubic term, while the nonlinearity in the Kuramoto--Sivashinsky equation is quadratic. The key concept which makes it possible for the integration scheme to work is the same in our case as in  \cite{ZPKuramotoII, ZPKuramotoIII}. Namely, the dissipativity of the leading linear operator together with appropriate a priori estimates for the nonlinearity, which is of lower order, allow the linear  terms to dominate over the nonlinear ones at appropriately high modes in  the Fourier expansion. This allows us to treat the tail of the Fourier expansion uniformly, by controlling a polynomial decay of the coefficients in every time step, cf. Lemma~\ref{lem:whyItWorks}.  Our techniques hold potential for wider applicability. To this end, in Section \ref{sec:algebra}, we establish estimates on the convolution of sine and cosine Fourier series, which can be utilized to calculate nonlinear terms for a general dissipative system with polynomial nonlinearities in one spatial dimension. These estimates are a crucial component in rigorous integration algorithms for such systems.
    A further innovation is that in our theoretical results, which ensure the validity of the algorithm, we don't require the use of Galerkin projections of the solution. Instead, we work directly with the solution of the PDE system on the level of abstract theorems as can be seen in Sections \ref{sec:AlgorithmIntegration} and \ref{sec:comAssisedProof}. This simplifies our assumptions and hence it makes the results more accessible to the dissipative PDE community. Additionally, we examine the limitations of our algorithm for PDEs with nonlinearities that do not meet the compatibility condition on the boundary:   this is the case even with the constant $A$ in place of  $A\sin(x)$ in the equation for $u_t$ in the Brusselator system \eqref{eq:BrusselatorPDE}. We illustrate this issue by considering the problem governed by the diffusive logistic equation \eqref{eq:logisticEquation} as an example.

 In Section \ref{sec:numerical}, we observe that for a sufficiently large parameter $B$, the system exhibits slow-fast behavior, as expected for the Brusselator system and known in the ODE case. This effect is stronger for higher Fourier modes. To demonstrate this, we establish the existence of periodic orbits for parameters $d_1 = 0.2,\;d_1 = 0.02,\;A=1,\;B=2+\frac{i}{10}$ for $i\in{0,\dots,11}$. Figures \ref{fig:OrbitsPlots1} and \ref{fig:OrbitsPlots2} show that some of these orbits exhibit slow-fast behavior.

 In \cite{ArioliBrusselator} Arioli observed a period doubling bifurcation, a phenomenon that cannot occur in the planar ODE \eqref{eq:BrusselatorODE}. Thus, the dynamics of \eqref{eq:BrusselatorPDE} is expected to be more complicated than that of the planar ODE \eqref{eq:BrusselatorODE}. Although we do not rigorously prove the bifurcation, we also show that the minimal period of the found orbits approximately doubles with a small increase in the parameter $B$, as seen in Theorem \ref{th:podwojenieOkresu}. In this range of parameter $B$ there should  also exist one unstable orbit with the period that is not doubled. To prove the existence of this kind of orbit we could use the rigorous solver for the Brusselator system together with the concept of $h$-sets and covering relations (see for example \cite[Section 2.1]{KuramotoAutomaticDiff} and \cite[Section 10.2]{ZPKuramotoIII}). Such approach has been successfully used to prove the existence of unstable periodic orbits before \cite[Theorem 45, Theorem 46]{ZPKuramotoIII}).

Other nontrivial dynamics of the Brusselator system, such as the existence of 2-dimensional attracting tori and chaos, were numerically investigated in \cite{BrusselatorChaosAnd2DTori}. Our numerical observations support the existence of 2-dimensional attracting tori for small diffusion parameters, although a rigorous proof of their existence remains an open problem. There are many avenues for further research on this topic. Numerical simulations indicate that the periodic orbit established in Theorem \ref{th:BrusselatorPeriodicOrbit} is attracting. However, providing a rigorous computer-assisted proof of this observation is challenging, as it requires a rigorous $C^1$ calculation, i.e. the integration of the variational equation for the Brusselator system.

The structure of the article is as follows.
In  Section \ref{sec:AlgorithmIntegration} we describe the algorithm of rigorous integration for dissipative equations. In Section \ref{sec:comAssisedProof} we describe the computer assisted proof of  Theorem
\ref{th:BrusselatorPeriodicOrbit}.
 In Section \ref{sec:crossing}, we address the algorithm for computing the Poincaré map and prove Theorem \ref{th:PoincareFixedPoint}, which pertains to the fixed point of this map which corresponds to the periodic orbit of the system. In the remaining part of Section \ref{sec:comAssisedProof} we describe the validation of the assumptions of this theorem for the Brusselator system.
 Section \ref{sec:numerical} contains numerical and rigorous results for various parameters of the Brusselator system.
Finally, in Section \ref{sec:algebra}, we provide results on the algebra of infinite series, which is utilized in the algorithms.

The code which was used in computer assisted proofs is published at GitHub \cite{Code} and based on  CAPD library \cite{CAPD,CAPDArticle}.

\begin{comment}

Brusselator system in ODE case has a periodic orbit for some parameters which arises from Hopf biffurcation of stationary point.
In this paper we are proving that periodic orbit also exists for the system with diffusion and Dirichlet boundary conditions. Prove is computer assisted.
\end{comment}

\section{Algorithm of integration}\label{sec:AlgorithmIntegration}
In this section we  present our version of the technique of integration for infinite dimensional dissipative systems proposed in \cite{ZPKuramotoII}, where it has been used for the Kuramoto--Shivashinsky equation. This method relies of the rigorous integration of a differential inclusion and it can be used for many dissipative problems in mathematical physics. We  discuss it in the abstract setting but some details will be specified for the Brusselator system. The other approach based on the automatic differentation is presented in \cite{KuramotoAutomaticDiff}. We summarize the content of this section. We start, in Section \ref{sec21}, with the formulation of the abstract problem for which the algorithm can be applied, and in Section \ref{sec:abbru} we discuss its realization for Brusselator PDEs.
In Section \ref{sect22} we briefly describe the goal of the algorithm and its main steps. The sets of states and their representation are discussed in Section \ref{sec:descritionSet}, and the way to compute the nonlinearities present in the system on those sets is described in Section \ref{sect24}.
In Section \ref{sec:enclosure}, we outline the steps for determining the enclosure and provide a justification for its correctness. The algorithm of evolution of sets is discussed in Section \ref{sec:Evolving set}.

\subsection{Abstract Problem}\label{sec21}

Let $H$ be a real Hilbert space with the scalar product $\skalarProduct{.}{.}$
and $Y$ a be a Banach space which continuously and densely embeds in $H$, that is $\norm{x}_H\leq C\norm{x}_Y$ for $x\in Y.$
We assume that  $\{e_1,e_2,\ldots\}$ is an orthogonal basis of $H$ such that $e_i\in Y$ for every $i\in\mathbb{N}.$

For a given $x\in H$ by $x_i$ we will denote the Fourier coefficient $x_i = \frac{\skalarProduct{e_i}{x}}{\skalarProduct{e_i}{e_i}}.$
We consider following problem
\begin{equation}\label{eq:AbstractProblem}
\begin{cases}
   \frac{d}{dt}x(t) = Lx(t) + f(x(t)) = F(x(t)),
   \\ x(0) = x^0,
\end{cases}
\end{equation}
where $L$ is a diagonal operator such that $Le_i=\lambda_i e_i$ where $\lambda\in\mathbb{R}\setminus\{0\}$, which generates a $C^0$ semigroup $e^{tL}:Y\to Y$.
We have $e^{Lt}e_i = e^{\lambda_it}e_i .$
We assume that $x^0\in Y$, and $f:Y\to Y$ is a continuous mapping. For a given $x\in Y$, we will use the notation $f_i(x)$ and $F_i(x)$ for $ \frac{\langle f(x),e_i\rangle}{\skalarProduct{e_i}{e_i}}$ and   $f_i(x) + \lambda_i x_i$, respectively.

The following lemma provides the criteria for local in time existence and uniqueness of mild solutions to problem \eqref{eq:AbstractProblem}. \begin{lemma}\label{lem:solExistence}
Assume that
\begin{enumerate}
    \item[(A1)] For some $C_1>0$ there holds $ \norm{e^{tL}}_Y \leq e^{C_1 t}.$
    \item[(A2)] For every $R>0$ there exists $C(R)>0$ such that for every $x,y\in Y$ such that $\norm{x}_Y,\norm{y}_Y\leq R$  there holds $\norm{f(x) - f(y)}_{Y}\leq C(R) \norm{x-y}_{Y}.$
\end{enumerate}
Then for every initial data $x^0\in Y$  there exists the unique local in time solution to problem \eqref{eq:AbstractProblem} understood in the following sense
\begin{equation}\label{eq:Duhamel}
x(t) = e^{Lt}x^0 + \int_0^t e^{L(t-s)}f(x(s))\, ds,
\end{equation}
where the equality in $Y$ is supposed to hold for every $t\in [0,T]$, where $T$ may depend on $x_0$.

\end{lemma}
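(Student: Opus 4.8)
The plan is to recast the Duhamel formula \eqref{eq:Duhamel} as a fixed-point equation and apply the Banach contraction principle on a suitable complete metric space. Concretely, for $T>0$ I would work in the Banach space $X_T = C([0,T];Y)$ equipped with the norm $\|x\|_{X_T} = \sup_{t\in[0,T]}\|x(t)\|_Y$, and define the Picard operator
\[
(\Phi x)(t) = e^{Lt}x^0 + \int_0^t e^{L(t-s)} f(x(s))\,ds.
\]
A mild solution in the sense of \eqref{eq:Duhamel} is exactly a fixed point of $\Phi$, so it suffices to produce a $T$ for which $\Phi$ is a contraction of a closed ball into itself.

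First I would fix the geometry of the ball in a non-circular way. Since $T$ will eventually be taken small, restrict attention to $T\le 1$ and set $M = e^{C_1}\|x^0\|_Y$, which by (A1) bounds $\|e^{Lt}x^0\|_Y$ uniformly for $t\in[0,1]$. Choosing the radius $R = M+1$ fixes, via (A2) applied between a point and $0$, the bound $\|f(z)\|_Y \le C(R)R + \|f(0)\|_Y =: K$ for all $\|z\|_Y\le R$, with $K$ independent of $T$. Then for $x$ in the closed ball $B_R = \{x\in X_T : \|x\|_{X_T}\le R\}$, the estimate $\|(\Phi x)(t)\|_Y \le M + e^{C_1 T}KT$ follows by inserting (A1) into the integral; hence $\Phi(B_R)\subseteq B_R$ as soon as $e^{C_1 T}KT \le 1$. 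For the contraction, the free term cancels and (A1) together with (A2) give $\|(\Phi x)(t)-(\Phi y)(t)\|_Y \le e^{C_1 T}C(R)T\,\|x-y\|_{X_T}$, so $\Phi$ is a contraction once $e^{C_1 T}C(R)T \le \tfrac12$. Both conditions hold for every sufficiently small $T$, and $B_R$ is complete, so Banach's theorem yields a unique fixed point in $B_R$.

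The one step that needs genuine care — and which I expect to be the main technical point rather than the estimates — is verifying that $\Phi$ actually maps into $C([0,T];Y)$, i.e. that $t\mapsto (\Phi x)(t)$ is $Y$-continuous. Here I would use precisely the hypothesis that $L$ generates a $C^0$ (strongly continuous) semigroup on $Y$: the free term $t\mapsto e^{Lt}x^0$ is then continuous, while continuity of the Duhamel integral $t\mapsto \int_0^t e^{L(t-s)}g(s)\,ds$ for $g\in C([0,T];Y)$ follows from strong continuity of $e^{Lt}$ combined with the uniform bound (A1), by splitting the difference of two such integrals into a short tail $\int_{t_1}^{t_2}$ (controlled by $\|g\|_{X_T}$ and the length $t_2-t_1$) and the remainder $\int_0^{t_1}(e^{L(t_2-s)}-e^{L(t_1-s)})g(s)\,ds$ (controlled by strong continuity and dominated convergence). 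The mere exponential bound (A1) would not suffice for this without strong continuity, which is why the $C^0$ assumption is essential.

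Finally, to upgrade uniqueness from \emph{within} $B_R$ to genuine local uniqueness, I would argue that any two mild solutions must coincide near $t=0$: both are $Y$-continuous and equal $x^0$ at $t=0$, so on a short enough interval they both lie in $B_R$, where the fixed point is unique; a standard Gronwall and continuation argument then propagates the coincidence along the whole common interval of existence. This completes the proof of Lemma \ref{lem:solExistence}.
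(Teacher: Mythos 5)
Your proposal is correct and follows essentially the same route as the paper: a Banach fixed-point argument for the Picard operator on a closed ball of $C([0,T];Y)$ with $T$ chosen small, using (A1) to bound the semigroup term and (A2) for the Lipschitz/contraction estimate. The only differences are cosmetic: the paper obtains the lemma as a special case of a more general result proved under (B1)--(B3) (allowing a singular bound $C_3 e^{C_4 t} t^{-\gamma}$ on the semigroup), its invariant set additionally fixes $y(0)=x^0$, and it does not spell out the $Y$-continuity of the Duhamel integral or uniqueness among all mild solutions beyond the ball, both of which you treat explicitly.
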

Instead of proving Lemma  \ref{lem:solExistence}, we will show more general result which implies it. Namely, we can replace (A1) and (A2) with  more general conditions. The following result generalises Lemma \ref{lem:solExistence} to the case when is $f$ is a continuous map from $Y$ to $Y^1,$ where $Y^1$ is a Banach space such that $Y$ is continuously embedded in $Y^1,$ and $Y^1$ is continuously embedded in $H.$ This more general result is useful, for example, if the nonlinear term in the problem depends not only on the value of the solution but also on the values of its spatial derivatives, which is the case for the Burgers or Kuramoto--Shivashinsky equations.
\begin{lemma}
Assume that
\begin{enumerate}
    \item[(B1)] For some $C_1,C_2>0$ and every $t\in[0,\infty)$ there holds $ \norm{e^{tL}}_Y \leq  C_1 e^{C_2 t}.$
    \item[(B2)] For every $R>0$ there exists $C(R)>0$ such that for every $x,y\in Y$ such that $\norm{x}_Y,\norm{y}_Y\leq R$  there holds $\norm{f(x) - f(y)}_{Y^1}\leq C(R) \norm{x-y}_{Y}.$
    \item[(B3)]  The semigroup $e^{tL}$ can be extended to the $C^0$ semigroup on $Y^1$. There exist constants $C_3,C_4>0$ and $\gamma\in[0,1)$ such that for every $z\in Y^1$ and $t\in(0,\infty)$ there holds $\norm{e^{tL}z }_Y\leq C_3e^{C_4 t}\frac{1}{t^\gamma}\norm{z}_{Y^1}$.
\end{enumerate}
Then, for every initial data $x^0 \in Y$, there exists a unique time-local solution to problem \eqref{eq:AbstractProblem} in the following sense:
$$
x(t) = e^{Lt}x^0 + \int_0^t e^{L(t-s)}f(x(s))\, ds,
$$
where the equality holds in $Y$ for all $t \in [0, T]$, where $T$ may depend on $x_0$.
\end{lemma}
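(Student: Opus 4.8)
The plan is to recast the mild formulation as a fixed point problem and apply the Banach contraction principle on a space of $Y$-valued continuous paths. Define the Duhamel operator
$$(\Phi x)(t) = e^{Lt}x^0 + \int_0^t e^{L(t-s)}f(x(s))\,ds, \qquad t \in [0,T],$$
on the Banach space $X_T = C([0,T];Y)$ with norm $\norm{x}_{X_T} = \sup_{t\in[0,T]}\norm{x(t)}_Y$. Since $e^{tL}$ generates a $C^0$ semigroup on $Y$, the free term $t\mapsto e^{Lt}x^0$ is continuous, and (B1) gives, after fixing $T\le 1$, the bound $M_0 := \sup_{t\in[0,T]}\norm{e^{Lt}x^0}_Y \le C_1 e^{C_2}\norm{x^0}_Y < \infty$ uniformly in such $T$. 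I would work in the closed ball $\mathcal{B}_R = \{x\in X_T : \norm{x}_{X_T}\le R\}$ with $R = M_0 + 1$, which is a complete metric space, and choose $T$ small enough that $\Phi$ is a self-map of $\mathcal{B}_R$ and a strict contraction there.

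For the self-mapping property, the first step is to bound the nonlinearity in the $Y^1$-norm. Applying (B2) with $y = 0$ yields $\norm{f(x(s))}_{Y^1} \le \norm{f(0)}_{Y^1} + C(R) R =: K$ whenever $\norm{x(s)}_Y \le R$. Combining this with the smoothing estimate (B3) gives
$$\norm{(\Phi x)(t)}_Y \le M_0 + C_3 e^{C_4 T} K \int_0^t (t-s)^{-\gamma}\,ds = M_0 + C_3 e^{C_4 T} K\,\frac{T^{1-\gamma}}{1-\gamma}.$$
The decisive point is that the kernel singularity is integrable precisely because $\gamma \in [0,1)$, so the last term tends to $0$ as $T\to 0$ and can be made $\le 1 = R - M_0$; hence $\Phi(\mathcal{B}_R)\subseteq \mathcal{B}_R$. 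One must also verify that $\Phi x$ is genuinely continuous in $t$ with values in $Y$: this is standard and follows by splitting the integral near $s=t$ and using strong continuity of the semigroup on $Y$ together with dominated convergence, the dominating function again being integrable thanks to $\gamma<1$.

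The contraction estimate is obtained analogously: for $x,y\in\mathcal{B}_R$, using (B3) followed by (B2),
$$\norm{(\Phi x)(t)-(\Phi y)(t)}_Y \le C_3 e^{C_4 T} C(R)\int_0^t (t-s)^{-\gamma}\norm{x(s)-y(s)}_Y\,ds,$$
which after bounding the integral yields $\norm{\Phi x - \Phi y}_{X_T} \le C_3 e^{C_4 T} C(R)\,\tfrac{T^{1-\gamma}}{1-\gamma}\,\norm{x-y}_{X_T}$. Shrinking $T$ so that $C_3 e^{C_4 T} C(R)\,T^{1-\gamma}/(1-\gamma) < 1$ makes $\Phi$ a strict contraction, and the Banach fixed point theorem produces a unique fixed point $x\in\mathcal{B}_R$, which is by construction the desired mild solution on $[0,T]$. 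To upgrade uniqueness in the ball to uniqueness among all $Y$-valued solutions, I would take two solutions, observe that on a sufficiently short subinterval both lie in a common ball, and apply a singular (Henry-type) Gronwall inequality adapted to the weakly singular kernel $(t-s)^{-\gamma}$ to force them to coincide, then continue this across $[0,T]$.

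I expect the only genuine subtlety to be the weakly singular kernel coming from (B3): it forbids a naive Gronwall argument and requires the observation that $\int_0^t(t-s)^{-\gamma}\,ds<\infty$ for $\gamma<1$, together with the corresponding singular Gronwall lemma for the uniqueness step. Everything else, namely the self-mapping, the contraction, and the time-continuity of the Duhamel term, is routine once this integrability is exploited, and the hypotheses (B1)--(B3) are tailored exactly so that all constants assemble into a prefactor that vanishes as $T\to 0$.
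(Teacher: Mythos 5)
Your proposal is correct and follows essentially the same route as the paper: a Banach fixed point argument for the Duhamel operator on a ball in $C([0,T];Y)$, with the self-map and contraction estimates both hinging on the integrability of the weakly singular kernel $(t-s)^{-\gamma}$ from (B3) and on shrinking $T$ so the prefactor drops below $1$. Your added remark on upgrading uniqueness from the ball to all $Y$-valued mild solutions via a singular Gronwall argument is a small strengthening that the paper's proof leaves implicit, but it does not change the substance of the argument.
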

\begin{proof}
For a given $x^0\in Y$ and $\delta>0$ consider the set
\begin{equation*}
    S_\delta:= \{y\in C([0,\delta];Y)\colon y(0) = x^0\;  \text{and for every} \ \ t\in[0,\delta] \text{ we have } \norm{y(t)}_Y\leq 1+C_1\norm{x^0}_Y  \},
\end{equation*}
and define the mapping  $T:C([0,\delta];Y) \to C([0,\delta];Y)$ by the formula
\begin{equation*}
    T(y)(t) = e^{Lt}x^0 + \int_0^te^{L(t-s)}f(y(s))ds.
\end{equation*}
The space $C([0,\delta];Y)$ is equipped with the norm $\sup_{t\in[0,\delta]}\norm{y(t)}_{Y}$.
We have
\begin{align*}
    \norm{T(y)(t)}_Y
    &\leq
    C_1e^{C_2t}\norm{x^0}_{Y}+
    \int_0^t\norm{e^{L(t-s)} f(y(s))}_Y ds
    \\&\leq
    C_1e^{C_2t}\norm{x^0}_{Y}+
    C_4\int_0^t e^{(t-s)C_3}\frac{1}{(t-s)^{\gamma}}\norm{ f(y(s))}_{Y_1} ds
    \\&\leq
     C_1e^{C_2t}\norm{x^0}_{Y}+
    C_4 e^{tC_3}\int_0^t\frac{1}{(t-s)^{\gamma}} ds
     \sup_{s\in[0,t]}\norm{f(y(s))}_{Y_1}
     \\&\leq
     C_1e^{C_2t}\norm{x^0}_{Y}+
     \frac{t^{1-\gamma}}{1-\gamma}
    C_4  e^{tC_3}
    \left(C(R)\sup_{s\in[0,t]}\norm{y(t)}_Y+\norm{f(0)}_{Y_1} \right),
\end{align*}
where $R = 1+C_1\norm{x^0}_Y.$
If we pick $\delta>0$ such that
\begin{equation*}
  e^{C_2\delta}\leq 1+\frac{1}{2C_1\norm{x_0}_Y},
  \quad\text{and}\quad
    \delta^{1-\gamma}e^{\delta C_3}\leq \frac{1-\gamma}{2C_4\left(C(R)R+\norm{f(0)}_{Y_1} \right)}
\end{equation*}

we have that $T(S_\delta) \subset S_\delta.$ We have also
\begin{align*}
  \norm{T(y_1)(t)-T(y_2)(t)}_Y \leq  C_4e^{tC_3}C(R)\frac{t^{1-\gamma}}{1-\gamma}
  \sup_{s\in[0,t]} \norm{y_1(s)-y_2(s)}_Y.
\end{align*}
If we take $\delta$ such that
\begin{equation*}
    \delta^{1-\gamma}e^{\delta C_3}\leq \frac{1-\gamma}{2C_4C(R)},
\end{equation*}
the mapping $T$ is also a contraction in the set $S_\delta.$ From Banach fixed point theorem we that $T$ has a unique fixed point which is a solution to \eqref{eq:AbstractProblem}.
\end{proof}
\begin{lemma}\label{re:flowCon}
Assume $(A1)-(A2)$ or $(B1)-(B3)$. For every $x^0\in Y$ there exist $t_{max}(x^0)\in(0,\infty]$ such that
for the
unique solution $x:[0,t_{max}(x^0)) \to Y$ of \eqref{eq:AbstractProblem} the interval $[0,t_{max}(x^0))$ is maximal interval of existence of this solution. Additionally if we consider the set
$$\Omega :=\left\{(t,x)\in \R^+\times Y: t\in\left[0,t_{max}(x)\right)\right\},$$ then the function $\varphi:\Omega\to Y$ given by  formula
\begin{equation}
    \varphi(t,x^0) = e^{Lt}x^0 + \int_0^t e^{L(t-s)} f(x(s))\, ds.
\end{equation}
defines a local semigroup.
\end{lemma}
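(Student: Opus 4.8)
The plan is to construct the maximal solution by patching local solutions together via uniqueness, then to establish the blow-up alternative, and finally to read off the local semigroup structure, again from uniqueness.

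First I would invoke the preceding existence-uniqueness lemma: for each $x^0 \in Y$ it yields a mild solution, in the Duhamel sense \eqref{eq:Duhamel}, on some interval $[0,\delta]$ with $\delta > 0$. Let $\mathcal{T}$ denote the set of all $T > 0$ for which a solution exists on $[0,T]$, and set $t_{max}(x^0) = \sup \mathcal{T} \in (0, \infty]$, which is positive because $\delta>0$. The uniqueness assertion of the same lemma shows that any two such solutions agree on the intersection of their intervals of definition, so they glue consistently into a single solution $x : [0, t_{max}(x^0)) \to Y$, which I take as the definition of $\varphi(\cdot,x^0)$.

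The next and most delicate step is the continuation argument showing that $[0, t_{max}(x^0))$ is genuinely the maximal interval of existence. The key observation is that the length $\delta$ of the existence interval produced by the Banach fixed point argument in the preceding lemma depends only on $\norm{x^0}_Y$ — through $R = 1 + C_1 \norm{x^0}_Y$ and the explicit smallness conditions imposed on $\delta$ — and can therefore be bounded below uniformly for $\norm{x^0}_Y$ in a bounded set. Suppose, for contradiction, that $t_{max}(x^0) < \infty$ and $\liminf_{t \to t_{max}^-} \norm{x(t)}_Y = M < \infty$. Choosing $t_n \uparrow t_{max}$ with $\norm{x(t_n)}_Y \le M + 1$, the solution restarted from $x(t_n)$ exists for a uniform time $\delta_0 = \delta_0(M+1) > 0$; taking $t_n$ so close to $t_{max}$ that $t_n + \delta_0 > t_{max}$ and invoking uniqueness to identify this restarted solution with $x$ on the overlap, I extend $x$ past $t_{max}$, contradicting the definition of $t_{max}$. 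Hence either $t_{max}(x^0) = \infty$ or $\norm{x(t)}_Y \to \infty$ as $t \to t_{max}(x^0)^-$, which is exactly maximality.

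Finally I would deduce the local semigroup property from uniqueness. One has $\varphi(0, x^0) = x^0$ immediately. Fix $s \in [0, t_{max}(x^0))$ and let $y^0 = \varphi(s, x^0)$. Splitting the Duhamel integral for $x(t+s)$ at the point $s$, using the semigroup identity $e^{L(t+s-r)} = e^{Lt} e^{L(s-r)}$ for $r \le s$, and substituting $r = \sigma + s$ in the remaining integral, I obtain that $z(t) := x(t+s)$ satisfies $z(t) = e^{Lt} y^0 + \int_0^t e^{L(t - \sigma)} f(z(\sigma))\, d\sigma$, i.e.\ $z$ is the mild solution issuing from $y^0$. By uniqueness $z(t) = \varphi(t, y^0)$, which is the cocycle relation $\varphi(t + s, x^0) = \varphi(t, \varphi(s, x^0))$; the same identity forces $t_{max}(x^0) = s + t_{max}(y^0)$, so the domain $\Omega$ is consistent and $\varphi$ is a local semigroup. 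The main obstacle is the continuation step, which hinges on the uniform-in-bounded-data lower bound for the local existence time extracted from the fixed point construction; the semigroup identity, by contrast, is a routine consequence of uniqueness once the change of variables in the Duhamel formula is carried out.
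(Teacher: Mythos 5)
Your proof is correct, but note that the paper offers no proof of this lemma at all: it is stated without proof, as a standard consequence of the preceding existence--uniqueness lemma. So there is nothing to compare against; what you have written is exactly the continuation-plus-uniqueness argument the authors leave implicit. Your central observation is the right one and is the only non-routine ingredient: in the fixed-point construction the admissible step $\delta$ is determined by explicit inequalities involving $\norm{x^0}_Y$ only (through $R = 1 + C_1\norm{x^0}_Y$), hence is bounded below uniformly for initial data in a bounded subset of $Y$. This is what makes the restart-and-glue step, and hence the blow-up alternative, work. The Duhamel splitting giving the cocycle identity is also carried out correctly.

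Two points deserve tightening in a final write-up, though neither is a genuine gap. First, the Banach fixed point theorem in the preceding lemma yields uniqueness only within the set $S_\delta$, i.e.\ among solutions staying in a prescribed ball of $C([0,\delta];Y)$, not among all continuous mild solutions; to glue solutions one needs the standard connectedness argument (the agreement set of two solutions is closed, and open because by continuity both solutions restarted from a common value lie in $S_{\delta'}$ for $\delta'$ small, where the fixed point is unique). Invoking the lemma's stated uniqueness hides this small step. Second, in the identity $t_{max}(x^0) = s + t_{max}(\varphi(s,x^0))$ the inequality $\geq$ requires concatenating the solution on $[0,s]$ with the solution issuing from $\varphi(s,x^0)$ and checking, by reversing your change of variables, that the concatenation satisfies \eqref{eq:Duhamel}; you assert this without carrying it out. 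Finally, if ``local semigroup'' is understood to include continuous dependence of $\varphi(t,\cdot)$ on the initial datum, an additional Gronwall-type estimate based on (A2) or (B2)--(B3) would be needed; your argument, like the literal statement of the lemma, establishes the domain structure and the algebraic cocycle property together with continuity in $t$.
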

\begin{remark}\label{lem:AbstractFourierCoef}
Let $x:[0,\tau]\to Y$ satisfy \eqref{eq:Duhamel}.
Assume $(A1)-(A2)$ or $(B1)-(B3)$.
Then for every $i\in\mathbb{N},$ the Fourier coefficients $x_i$ satisfy the following non-autonomous ODE
\begin{equation*}
    \frac{dx_i}{dt}(t) = \lambda_ix_i(t)+f_i(x(t)),
    \quad \text{for every $t\in(0,\tau)$}.
\end{equation*}
\end{remark}
\begin{proof}
If $x(t)$ satisfies \eqref{eq:Duhamel} then for every  $i\in\mathbb{N}$ we have
\begin{equation}\label{eq:formula1}
    x_i(t) = e^{t \lambda_i}x^0_i+
    \int_0^te^{(t-s)\lambda_i} f_i(x(s))ds.
\end{equation}
Observe that if $(A1)-(A2)$ or $(B1)-(B3)$ hold, then for every $i\in\mathbb{N}$ the function $f_i:Y\to\mathbb{R}$ is locally Lipschitz.
Hence, in the formula \eqref{eq:formula1} the function under integral is continuous.
Therefore we can differentiate this formula and get the ODE for $i$-th Fourier coefficient.
\end{proof}

\subsection{The Brusselator system.} \label{sec:abbru} We discuss how to represent the Brusselator system in the abstract framework presented in the previous section. We use the notation  $L^2$ for $L^2(0,\pi)$ equipped with the norm $\norm{u}_{L^2} = \sqrt{\int_0^{\pi}u(x)^2\, dx}$ and $C_0$ for $\{ u\in C([0,\pi])\,:\ u(0)=u(\pi) = 0\}$ equipped with the norm $\|u\|_{C_0} = \max_{x\in [0,\pi]}\{|u(x)|\}$, and we consider the following two product spaces: the Hilbert space $$
H = L^2\times L^2 \ \ \textrm{with the norm} \ \ \norm{(u,v)}_H^2 = \norm{u}_{L^2}^2 + \norm{v}_{L^2}^2 \ \ \textrm{for}\ \  (u,v)\in H$$ and the Banach space
$$Y = C_0\times C_0\ \ \textrm{with the norm}\ \ \norm{(u,v)}_{Y}
= \max\{\norm{u}_{C_0},\norm{v}_{C_0}\}
\ \ \textrm{for}\ \  (u,v)\in Y.$$
In the space $H$ system of functions  $\{e_k\}_{k=1}^\infty$ defined in following way
\begin{equation*}
    e_{2k-1} = (\sin(kx),0)\quad e_{2k} =(0,\sin(kx))\ \ \textrm{for}\ \  k\in \{1,2,\ldots\}.
\end{equation*}
is the orthogonal basis.  For $u,v\in L^2$ we denote
by $u_k$ and $v_k$ the $k$-th coefficients in the Fourier expansion in the sine series, of $u$ and $v$ respectively.
We define the operator
$$
L:D(L)\supset Y \to Y\ \ \textrm{as}\ \ L(u,v) = (d_1u_{xx} - (B+1)u, d_2 v_{xx}),$$
where $D(L) = \{ (u,v)\in H^1_0 \times H^1_0 \colon \  L(u,v) \in Y\}$. The operator $L$ defines a $C^0$ semigroup on $Y$, denoted by $e^{tL}$, cf. \cite[Proposition 2.6.7 and Theorem 3.1.1]{HarauxCazenave}. Observe that
$Le_{2k-1} = -(d_1k^2 +B+ 1) e_{2k-1}$ and $Le_{2k} = -d_2k^2e_{2k} ,$
so  $\{e_k\}_{k=1}^\infty$ are the eigenfunctions of $L.$  We define $f(u,v) = (u^2v + A\sin(x),Bu-u^2v).$
We can write the Brusselator system \eqref{eq:BrusselatorPDE} as the following abstract problem
\begin{equation}\label{eq:AbstractBrusselator}
    \begin{cases}
     \frac{d}{dt}(u(t),v(t)) = L(u(t),v(t)) + f(u(t),v(t)),\\
      (u(0),v(0)) = (u^0,v^0).
    \end{cases}
\end{equation}

We apply Lemma \ref{lem:solExistence} to the above system, which gives the following result.
\begin{theorem}\label{ref:exbru}
For every $(u^0,v^0)\in Y$ there exists a function $(u,v):[0,t_{max}(u_0,v_0))\to Y$ which is the unique solution to the
\eqref{eq:AbstractBrusselator}, satisfying the Duhamel formula
\begin{equation*}
    (u(t),v(t)) = (u^0,u^0) + \int_0^te^{L(t-s)}f(u(s),v(s))ds.
\end{equation*}
\end{theorem}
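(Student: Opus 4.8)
The plan is to verify the two hypotheses (A1) and (A2) of Lemma~\ref{lem:solExistence} for the concrete operator $L$ and nonlinearity $f$ attached to the Brusselator system, and then to apply that lemma directly. Recall that $e^{tL}$ acts diagonally on the two components of $Y$: on the first factor it is generated by $d_1\partial_{xx}-(B+1)$ and on the second by $d_2\partial_{xx}$, both with homogeneous Dirichlet conditions.

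First I would establish (A1). By the parabolic maximum principle the Dirichlet heat semigroup $e^{td_j\partial_{xx}}$ is a contraction on $C_0$ in the supremum norm for $j=1,2$, while the zeroth-order term $-(B+1)$ contributes only a decaying factor $e^{-(B+1)t}\le 1$. Since $\norm{(u,v)}_Y$ is the maximum of the two component norms, this gives $\norm{e^{tL}}_Y\le 1\le e^{t}$, so (A1) holds with $C_1=1$; alternatively the contractivity can be read off from the references already cited for the generation of the semigroup.

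Next I would check (A2), namely that $f(u,v)=(u^2v+A\sin x,\,Bu-u^2v)$ maps $Y$ into $Y$ and is locally Lipschitz. The mapping into $Y$ is exactly where the choice of the forcing $A\sin x$ rather than the constant $A$ matters: since $u,v\in C_0$ vanish at $x\in\{0,\pi\}$, the products $u^2v$ and $Bu$ are continuous and vanish at the endpoints, and $A\sin x$ vanishes there too, so each component of $f(u,v)$ lies in $C_0$. For the Lipschitz bound I would use that $(C_0,\norm{\cdot}_{C_0})$ is a Banach algebra together with the identity
$$u_1^2v_1-u_2^2v_2 = u_1^2(v_1-v_2)+(u_1-u_2)(u_1+u_2)v_2,$$
and the trivial expansion of $Bu_1-Bu_2$. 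On the ball $\norm{(u_j,v_j)}_Y\le R$ this yields $\norm{f(u_1,v_1)-f(u_2,v_2)}_Y\le C(R)\norm{(u_1,v_1)-(u_2,v_2)}_Y$ with, for instance, $C(R)=B+3R^2$; the constant summand $A\sin x$ cancels in the difference and is irrelevant here.

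With (A1) and (A2) in hand, Lemma~\ref{lem:solExistence} applies and yields, for each $(u^0,v^0)\in Y$, a unique time-local mild solution obeying the Duhamel formula. I expect the only genuinely non-routine step to be the contractivity in (A1), which rests on the maximum principle for the Dirichlet heat equation and its compatibility with the product max-norm on $Y$; condition (A2) is then a direct consequence of the algebra structure of the supremum norm. The observation that $f$ preserves $Y$ is elementary here, but it is precisely the structural feature that breaks down for the constant-forcing variant \eqref{eq:BrusselatorPDEILL}, as emphasized in the introduction.
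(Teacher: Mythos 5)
Your proposal is correct and follows essentially the same route as the paper's proof: both verify (A1) via the maximum principle for the Dirichlet heat semigroup and (A2) via the identical algebraic splitting $u_1^2v_1-u_2^2v_2=u_1^2(v_1-v_2)+(u_1-u_2)(u_1+u_2)v_2$, and then invoke Lemma~\ref{lem:solExistence}. The extra details you supply (the explicit constant $C(R)=B+3R^2$ and the check that $f$ maps $Y$ into $Y$, which hinges on the forcing $A\sin x$ vanishing at the endpoints) are left implicit in the paper but are consistent with it.
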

\begin{proof}
We demonstrate that for the Brusselator problem \eqref{eq:AbstractBrusselator} assumptions (A1) and (A2) hold and we can use Lemma \ref{lem:solExistence}. Namely
\begin{equation*}
   \norm{e^{Lt}}_Y\leq 1,\text{ for every }t\in[0,\infty),
\end{equation*}
which follows from the maximum principle for the heat equation. Furthermore
\begin{equation*}
    \norm{u^2v-\Bar{u}^2\Bar{v}}_{C_0}
    \leq
    \norm{u}^2_{C_0}\norm{v-\Bar{v}}_{C_0}+
    \norm{u+\Bar{u}}_{C_0}
    \norm{\Bar{v}}_{C_0}
    \norm{u-\Bar{u}}_{C_0}\ \ \textrm{for}\ \ u,v\in C_0.
\end{equation*}
So for every $R>0$ there exist $C(R)$ such that for every $\norm{(u,v)}_Y,
\norm{(\Bar{u},\Bar{v})}_Y\leq R$
\begin{equation*}
  \norm{f(u,v)-f(\Bar{u},\Bar{v})}_Y\leq C(R)
  \norm{(u,v)-(\Bar{u},\Bar{v})}_Y,
\end{equation*}
which concludes the proof.
\end{proof}
From Remark \ref{lem:AbstractFourierCoef} and the formula for expanding the expression $u^2v$ in terms of sine Fourier series, we obtain the following lemma.

\begin{lemma}
Let  $(u(t),v(t))$ solve the system \eqref{eq:BrusselatorPDE}. For every $k\in\mathbb{N}$ there holds
\begin{equation}\label{eq:BrusselatorInfODE}
  \begin{cases}
   \frac{d}{dt}u_k = -u_k (d_1k^2 + 1 + B) + N(u,v)_k + A\delta_{1k},
   \\\frac{d}{dt}v_k =-v_k d_2k^2  + u_kB - N(u,v)_k,
  \end{cases}
  \end{equation}
  where
 \begin{equation}
       N(u,v)_k = \frac{1}{4}
\sum_{i_1 + i_2 + i_3 = k}u_{|i_1|}u_{|i_2|}v_{|i_3|} \sgn(-i_1i_2i_3) \text{ with $i_1,i_2,i_3\in\mathbb{Z}\setminus\{0\}$}.
\end{equation}
\end{lemma}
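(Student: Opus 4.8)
The plan is to obtain the system \eqref{eq:BrusselatorInfODE} directly from Remark \ref{lem:AbstractFourierCoef} applied to the abstract formulation \eqref{eq:AbstractBrusselator}, once the eigenvalues of $L$ and the sine Fourier coefficients of the nonlinearity $f(u,v)=(u^2v+A\sin(x),\,Bu-u^2v)$ have been identified. Theorem \ref{ref:exbru} guarantees that the solution lies in $Y$ and satisfies the Duhamel formula, so Remark \ref{lem:AbstractFourierCoef} applies and yields, for each basis index $j$, the scalar ODE $\frac{d}{dt}x_j=\lambda_j x_j+f_j(x)$. It then remains only to split this according to the two families of basis vectors $e_{2k-1}=(\sin(kx),0)$ and $e_{2k}=(0,\sin(kx))$, whose coefficients are exactly $u_k$ and $v_k$.

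First I would record the linear part, which is immediate. From Section \ref{sec:abbru} the eigenvalues are $\lambda_{2k-1}=-(d_1k^2+B+1)$ and $\lambda_{2k}=-d_2k^2$, producing the terms $-u_k(d_1k^2+1+B)$ and $-v_kd_2k^2$. The forcing $A\sin(x)$ sits entirely in the first mode of the first component, contributing $A\delta_{1k}$, while the linear term $Bu$ in the second component contributes its $k$-th sine coefficient $Bu_k$.

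The only substantive step is to expand $u^2v$ in the sine basis and to show that its $k$-th coefficient equals $N(u,v)_k$; the same expansion, with an opposite sign, supplies the $-u^2v$ appearing in the second equation. Here I would pass to the odd $2\pi$-periodic extension of the coefficients, setting $u_{-i}=-u_i$, $v_{-i}=-v_i$ and $u_0=v_0=0$, so that $u(x)=\frac{1}{2\mathrm{i}}\sum_{i\in\mathbb{Z}}u_ie^{\mathrm{i}ix}$ and analogously for $v$, where $\mathrm{i}$ denotes the imaginary unit. Multiplying three such series gives
\[
u^2v=\Big(\tfrac{1}{2\mathrm{i}}\Big)^3\sum_{i_1,i_2,i_3\in\mathbb{Z}}u_{i_1}u_{i_2}v_{i_3}\,e^{\mathrm{i}(i_1+i_2+i_3)x},
\]
and collecting the exponentials with $i_1+i_2+i_3=k$ and comparing with the odd-extended representation $u^2v=\frac{1}{2\mathrm{i}}\sum_{k\in\mathbb{Z}}N_k\,e^{\mathrm{i}kx}$ yields $N_k=-\frac14\sum_{i_1+i_2+i_3=k}u_{i_1}u_{i_2}v_{i_3}$. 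Rewriting the extended coefficients via $u_i=\sgn(i)u_{|i|}$ turns the summand into $\sgn(i_1i_2i_3)u_{|i_1|}u_{|i_2|}v_{|i_3|}$, and absorbing the overall minus sign into $\sgn(-i_1i_2i_3)$ produces exactly the stated formula for $N(u,v)_k$; since $u_0=v_0=0$, the summation effectively ranges over $i_1,i_2,i_3\in\mathbb{Z}\setminus\{0\}$.

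I expect the bookkeeping of the signs and of the prefactor $\big(\tfrac{1}{2\mathrm{i}}\big)^3=\tfrac{\mathrm{i}}{8}$ to be the main obstacle. One must verify that $u^2v$ is again odd, so that it admits a pure sine expansion with the same symmetry used for $u$ and $v$; justify the termwise multiplication and rearrangement of the series using the regularity $u,v\in C_0$ supplied by Theorem \ref{ref:exbru}; and confirm that the three factors of $\sgn$ combine so that the resulting coefficient is real and carries the claimed weight $\tfrac14\sgn(-i_1i_2i_3)$. None of this is deep, but it is precisely the part where a sign slip is easiest to make.
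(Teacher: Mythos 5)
Your proposal is correct and follows exactly the route the paper intends: the paper gives no separate proof of this lemma, deriving it in one line from Remark \ref{lem:AbstractFourierCoef} (via Theorem \ref{ref:exbru}) together with the sine-series expansion of $u^2v$, which is precisely what you carry out. Your sign bookkeeping via the odd extension and the prefactor $\bigl(\tfrac{1}{2\mathrm{i}}\bigr)^3=\tfrac{\mathrm{i}}{8}$ checks out and reproduces $N(u,v)_k$ as stated.
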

Finally, we present the result that states the fact that the space of functions with only odd Fourier coefficients being nonzero for functions $u$ and $v$ is forward-invariant and corresponds to the space of functions that are symmetric with respect to the point $x=\frac{\pi}{2}$.
\begin{proposition}\label{prop:OddSubspace}
Space
$W := \{(u,v)\in Y: u_{2i} = v_{2i} = 0, \;\text{for}\;i\in\mathbb{N}\}$ is invariant for system \eqref{eq:BrusselatorPDE}. Specifically, if $(u_0,v_0)\in W$ then $u(t,\frac{\pi}{2}+x) = u(t,\frac{\pi}{2}-x)$ and
$v(t,\frac{\pi}{2}+x) = v(t,\frac{\pi}{2}-x)$
for every $t\in [0,t_{max}(u_0,v_0))$ and almost every $x\in[0,\frac{\pi}{2}].$
\end{proposition}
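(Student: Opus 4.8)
The plan is to identify $W$ first as a symmetry-invariant subspace and then to exploit the equivariance of the Brusselator flow under the spatial reflection about $x=\pi/2$.

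First I would record the elementary identity $\sin\!\big(k(\pi-x)\big)=(-1)^{k+1}\sin(kx)$, which shows that the sine mode $\sin(kx)$ is symmetric with respect to $x=\pi/2$ (i.e.\ invariant under $x\mapsto \pi-x$) exactly when $k$ is odd, and antisymmetric when $k$ is even. Consequently, for $u\in L^2$ with sine coefficients $u_k$, the condition $u_{2i}=0$ for all $i$ is equivalent to $u(\pi-x)=u(x)$ for a.e.\ $x$. Applying this to both components identifies $W$ with the set of pairs fixed by the reflection operator $S$ defined on $Y$ (and on $H$) componentwise by $(Su)(x):=u(\pi-x)$; that is, $W=\{(u,v)\colon Su=u,\ Sv=v\}$. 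This already yields the ``specifically\dots'' part of the statement once invariance is established, since the symmetry $u(t,\tfrac{\pi}{2}+x)=u(t,\tfrac{\pi}{2}-x)$ is exactly the condition $Su(t)=u(t)$.

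Second, for forward invariance I would show that the whole flow commutes with $S$ and then appeal to uniqueness. The operator $S$ commutes with $L$: the second derivative $\partial_{xx}$ is unchanged by $x\mapsto\pi-x$, multiplication by $-(B+1)$ is pointwise, and $S$ preserves the Dirichlet conditions and the domain $D(L)$; hence $Se^{tL}=e^{tL}S$. The operator $S$ also commutes with the nonlinearity $f(u,v)=(u^2v+A\sin x,\ Bu-u^2v)$: the polynomial terms are pointwise products, so $S(u^2v)=(Su)^2(Sv)$, while the forcing is symmetric because $\sin(\pi-x)=\sin x$; thus $Sf(u,v)=f(Su,Sv)$. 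Feeding these into the Duhamel formula of Theorem~\ref{ref:exbru} shows that if $(u,v)$ solves the system with data $(u^0,v^0)$, then $(Su,Sv)$ solves it with data $(Su^0,Sv^0)$. For $(u^0,v^0)\in W$ the two data coincide, so by uniqueness $(Su(t),Sv(t))=(u(t),v(t))$ for all $t\in[0,t_{max})$, i.e.\ the solution remains in $W$, which is both the required invariance and the claimed symmetry.

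Alternatively --- and this is the concrete mechanism underlying the equivariance --- one can argue directly on the modal ODEs \eqref{eq:BrusselatorInfODE}. When $(u,v)\in W$, only odd indices contribute to the convolution $N(u,v)_k=\frac14\sum_{i_1+i_2+i_3=k}u_{|i_1|}u_{|i_2|}v_{|i_3|}\sgn(-i_1i_2i_3)$, and a sum of three odd integers is odd, so $N(u,v)_k=0$ for every even $k$; since $\delta_{1k}=0$ for even $k$ as well, the even modes obey the closed linear homogeneous system $\frac{d}{dt}u_{2k}=-u_{2k}(4d_1k^2+1+B)$, $\frac{d}{dt}v_{2k}=-4d_2k^2v_{2k}+Bu_{2k}$, whose solution with zero initial data is identically zero. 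I expect the only genuine subtlety --- the main obstacle --- to be the justification that this formal modal reasoning actually determines the $Y$-valued solution; this is precisely why I would phrase the invariance through the equivariance of $S$ together with the uniqueness from Theorem~\ref{ref:exbru}, rather than rely on the infinite coupled ODE system alone. The one point not to overlook is the forcing term $A\sin x$: it lies in $W$ exactly because $\sin$ is symmetric about $x=\pi/2$, and this is what makes the reflection a true symmetry of \eqref{eq:BrusselatorPDE}.
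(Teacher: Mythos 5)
Your proposal is correct. A point of comparison worth noting: the paper states Proposition~\ref{prop:OddSubspace} \emph{without proof}, so there is no argument in the text to measure yours against --- your write-up in fact supplies the missing justification. The route you take (identify $W$ as the fixed-point set of the reflection $S(u,v)(x)=(u(\pi-x),v(\pi-x))$ via $\sin(k(\pi-x))=(-1)^{k+1}\sin(kx)$, check that $S$ commutes with $L$, with $e^{tL}$, and with $f$ --- the latter using that the forcing $A\sin x$ is itself reflection-symmetric --- and then conclude from the Duhamel formula and the uniqueness in Theorem~\ref{ref:exbru} that $(Su,Sv)$ and $(u,v)$ are both solutions with the same data, hence equal) is the standard equivariance-plus-uniqueness argument and is complete. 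You are also right to flag that the alternative modal argument is not self-contained as stated: the even-mode equations form a closed homogeneous linear system only \emph{after} one knows $N(u,v)_{2k}=0$, i.e.\ only along a solution already known to lie in $W$, so by itself it is circular; your decision to rest the proof on the equivariance of $S$ together with uniqueness, and to treat the modal computation merely as the underlying mechanism, resolves exactly this issue. One last detail you could make explicit if this were to be written out in full: $S$ is a bounded linear operator on $Y$, so it passes through the Bochner integral in the Duhamel formula, and it commutes with $e^{tL}$ because both operators are diagonal (up to sign) on the basis $\{\sin(kx)\}$, whose finite linear combinations are dense in $C_0$.
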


\begin{remark}
In the proof of Theorem \ref{ref:exbru} we verify that assumptions (A1) and (A2) hold for the Brusselator system. If the lower order nonlinearity depends on the derivatives of the unknown, then we need  (B1)--(B3). Indeed, consider the Kuramoto--Sivashinsky equation 
$$
u_t = - \nu u_{xxxx} -  u_{xx} + (u^2)_x\ \ \textrm{for}\ \ (x,t)\in (0,\pi)\times (0,\infty), 
$$ 
with odd-periodic boundary conditions $u(0,t) = u(\pi,t)=u_{xx}(0,t) = u_{xx}(\pi,t)=0$ studied in \cite{ZPKuramotoII, ZPKuramotoIII}. We assume that the constant $\nu$ is positive. The system $\{ \sin(kx)\}_{k=1}^\infty$ constitutes the orthogonal in $L^2(0,\pi)$ basis of eigenfunctions of the leading linear operator $Lu = -\nu u_{xxxx}- u_{xx}$ with the eigenvalues $\lambda_k = -\nu k^4 + k^2$. To verify (B1)-(B3) we take $H=L^2(0,\pi)$, $Y^1=\{u\in H^3(0,\pi)\,: u(0)=u(\pi)=u_{xx}(0)=u_{xx}(\pi) = 0\}$, and $Y=H^4(0,\pi)\cap Y^1$. If $u=\sum_{k=1}^\infty u_k\sin(kx)$, then 
$$
\|u\|_{Y^1}^2 = \frac{2}{\pi}\sum_{k=1}^\infty k^6|u_k|^2,\ \  \|u\|_{Y}^2 = \frac{2}{\pi}\sum_{k=1}^\infty k^8|u_k|^2.
$$  
The operator $L$ is diagonal and the evolution of the $k$-th mode via the linear semigroup $e^{tL}$ is given by the formula
$$
u_k(t) = u_k(0)e^{(-\nu k^4+k^2)t}.
$$
It is easy to verify that the function $k\to -\nu k^4 + k^2$ has its maximum equal to $\frac{1}{4\nu}$ at $k=\frac{1}{\sqrt{2\nu}}$. This leads to the estimates $\|e^{tL}\|_{\mathcal{L}(Y^1;Y^1)}\leq e^{\frac{t}{4\nu}}$ and $\|e^{tL}\|_{\mathcal{L}(Y;Y)}\leq e^{\frac{t}{4\nu}}$. Verification of (B3) follows the concept of \cite[Lemma 3.1]{Tadmor}. Indeed, assuming that $u^0\in Y^1$ is the initial data, we obtain
$$
\|e^{tL}u^0\|^2_{Y} = \frac{2}{\pi}\sum_{k=1}^\infty k^6|u^0_k|^2 k^2e^{2(k^2-\nu k^4)t}.
$$
A straightforward computation which involves the maximization over $k\geq 0$ shows that 
$$
ke^{(k^2-\nu k^4)t} \leq \frac{C}{\sqrt[4]{\nu t}}e^{\frac{t}{\nu}}, 
$$
where $C$ is independent of $k, \nu, t$. We deduce that
$$
\|e^{tL}u^0\|_{Y} \leq \frac{C}{\sqrt[4]{\nu t}}e^{\frac{t}{\nu}} \|u^0\|_{Y^1},
$$
and (B3) is proved. To get (B2) it is enough that
$$
\|2uu_x-2vv_x\|_{H^3} \leq C(R)\|u-v\|_{H^4},
$$
where $\|u\|_{H^4}, \|v\|_{H^4}\leq R$, which is straightforward to verify. 
\end{remark}

\subsection{Overview of algorithm}\label{sect22}
 First of all, since the phase space $Y$ of our  abstract problem \eqref{eq:AbstractProblem} is infinite dimensional, we need a suitable representation of sets from this space. Once we have such representation, the key concept of the method is, for a given set $X(0)$ of initial data and a time-step $\tau$, to effectively construct another set $X(\tau)$, such that it is guaranteed that every solution starting from $X(0)$ at time $t=0$ belongs to $X(\tau)$ at time $t=\tau$. Of course sets $X(0)$ and $X(\tau)$ must be described in previously defined representation. In other words, if $\varphi:\Omega\to \mathbb{R}$ is the local semigroup governed by the solutions of \eqref{eq:AbstractProblem} we need to be able to construct the set $X(\tau)$ such that $\varphi(\tau ,X(0)) \subset X(\tau)$. Furthermore, as we do not exclude the possibility of blow-up arbitrarily, the algorithm should also ensure that the value of $\varphi(\tau,x)$ is well-defined for every $x \in X(0)$.
 At the same time, the set $X(\tau)$ should be as small as possible, as we iterate the above procedure to find sets that contain all solutions originating from a given set of initial data at large times.
The chosen representation always results in an overestimation at each iteration. Because we need to represent sets $X(0)$ and $X(\tau)$ in the computer memory, which is finite, we represent those sets as finite objects.
In the abstract problem \eqref{eq:AbstractProblem} we assume that  our phase space $Y$ is embedded in a Hilbert space $H$ with the basis $\{e_k\}_{k=1}^\infty .$ We represent $H = H_P\oplus H_Q.$
where $H_P = \text{span}\{e_1,\dots,e_n\} \cong \mathbb{R}^n$ and $H_Q$ is an orthogonal complement of $H_P$ in $H.$ By $P,Q$ we will denote orthogonal projections on the spaces $H_P$ and $H_Q$ respectively.
We represent the sets $X(0), X(\tau) \subset Y$ as
\begin{equation*}
    X(0) =  X_{P}(0)  + X_{Q}(0)\qquad X(\tau) =  X_{P}(\tau) + X_{Q}(\tau),
\end{equation*}
where $X_{P}(0), X_{P}(\tau) \subset H_P$ are sets in finite dimensional space and $X_{Q}(0),X_{Q}(\tau) \subset H_Q$ are infinite dimensional sets, which need some finite representation. We realize this representation by giving some inequalities, which are uniform with respect to coefficients in the Fourier representation. Such decomposition of the set will be called a $P, Q$ representation.
Now, essentially, we divide algorithm into two parts
\begin{enumerate}
    \item  Find the set $X([0,\tau])$, given in the same $P, Q$ representation as the set  $X(0)$, such that every the solution to the considered problem satisfies $\varphi(t ,X(0)) \subset X([0,\tau])$ for every $t\in [0,\tau]$.  We will equivalently write $\varphi([0,t] ,X(0)) \subset X([0,\tau])$ and we will call such set an enclosure.
    The details of finding this enclosure are given in  Section \ref{sec:enclosure}.

    \item Use the obtained enclosure to find the set  $X(\tau).$ To this end, we use the following procedures to find separately the sets $X_P(\tau)$ and $X_Q(\tau).$

    \begin{itemize}
        \item
        We formulate a differential inclusion in $\mathbb{R}^n$ for the $P$ component of the solution. In practice this $P$ component consists of a finite number of Fourier coefficients (with respect to the space variable) of the solutions of the system \eqref{eq:BrusselatorInfODE}. The influence from the omitted variables, i.e. the ones from $Q$, is estimated from the  enclosure obtained in the first step.
        The differential inclusion is integrated rigorously over time interval $\tau$ and for initial values belonging to the set $X_P(0)$. As a result we obtain the bounds for the coordinates of $X_P(\tau)$. There are two possibilities to get these bounds:
        \begin{itemize}
            \item Study the evolution of variables belonging  to $P$ separately, coordinate by coordinate, by solving the linear differential inequalities. That is, for every $i\in\mathbb{N}$ we are estimating the evolution of the Fourier modes from the equation
            \begin{equation*}
                \frac{d}{dt}x_i(t) = \lambda_i x_i(t) + f_i(x(t)).
            \end{equation*}
            \item Solve rigorously the following vector differential inclusion, obtained by considering the Galerkin projection of the problem \eqref{eq:AbstractProblem} on the space $H_P,$ and estimating the influence the omitted terms in the equation through the multivalued expression $I$.

            \begin{equation*}
                \frac{d}{dt}Px \in PF(Px) + I.
            \end{equation*}

            The  rigorous integration algorithm for finite dimensional vector inclusions such as the one above is described in \cite{ControlKapelaZgliczynski}.
        \end{itemize}
        We  intersect the estimates obtained by two above techniques in order to obtain the sharper bounds.

        \item Use the a priori estimates coming from the dissipativity of the linear part of the problem to obtain the representation of $X_Q(\tau)$. The influence of the nonlinear terms is estimated from the enclosure found in the first step.
    \end{itemize}
    The details of this step of the algorithm are given in Section
    \ref{sec:Evolving set}.
\end{enumerate}
\subsection{Representation of sets}\label{sec:descritionSet}
The important role in the algorithm will be  played by the sequences $\{V_i\}_{i=1}^\infty$ of intervals which we will call infinite interval vectors. For a given infinite interval vector $V$ we will denote the $i$-th interval by $V_i$ and its left and right ends by
$V_i^-,V_i^+$, respectively.
If for every sequence  $\{v_i\}_{i=1}^\infty$, such that $v_i\in V_i$, the
series
$\sum_{i=1}^\infty e_iv_i$ converges in $H$, we will call the set $\{\sum_{i=1}^\infty e_iv_i \in H : v_i\in V_i \}$ a representation of infinite interval vector and we will say that the infinite vector is representable in the space $H.$
It is possible that infinite interval vector $V$ does not represent subset of $H$ as it can happen that the series
$\sum_{i=1}^\infty e_iv_i$ where $v_i\in V_i$ does not converge in $H$.
 Whenever it will not lead to confusion we will use the same nonantion for infinite interval vectors and their representations.

We define several useful operation on the infinite interval vectors. First, for infinite vector  $V$ we
define the quantities
\begin{equation*}
    V^- = \{[V^-_i,V^-_i]\}_{i=1}^\infty,
    \quad
    V^+ = \{[V^+_i,V^+_i]\}_{i=1}^\infty.
\end{equation*}
For a given interval $I$ the we define multiplication an infinite interval vector $V$ by the interval $I$ as
\begin{equation*}
I\, V= V\, I =  \{I V_i\}_{i=1}^\infty.
\end{equation*}
For two infinite intervals vectors
$V$ and
$W$ we define their sum and element-wise product as
\begin{equation*}
    V+W = \{V_i + W_i\}_{i=1}^\infty,\quad V*W = \{V_iW_i\}_{i=1}^\infty.
\end{equation*}
We say that vector $V$ is a subset of $W$ and denote by
$V\subset W$ if and only if
$V_i\subset W_i$ for every $i\in\mathbb{N}.$
Additionally we define
$V\subset_\text{int} W$ if and only if
$V_i\subset \text{int} W_i$ for every $i\in\mathbb{N}.$ We define the convex hull of two infinite intervals vectors as
\begin{equation*}
  \text{conv}\{V,W\} =\{ \text{conv}\{V_i\cup W_i\} \}_{i=1}^\infty.
\end{equation*}
The intersection of two infinite vectors is defined in the following way
\begin{equation*}
  V \cap W =\{ V_i\cap W_i \}_{i=1}^\infty.
\end{equation*}
Note that all above operations make sense for all infinite interval vectors, and not only the representable ones.

In the algorithm we consider such sets $X = X_P + X_Q$, for which there exist infinite interval vectors whose representation contains the set $X.$
Specifically, for the Brusselator system we will work with the pairs of interval infinite vectors which are given in the form $(U,V) =\{(U_i,V_i)\}_{i\in\mathbb{N}^+}$ where $U_i$ and $V_i$ are the intervals. Such vectors  can be easily re-indexed into the form described previously. We will work with class of infinite interval vectors, with polynomial estimates on the tail. That means that for some $n\in\mathbb{N}$ and $s\in\mathbb{R}$ and every sequence $u_i\in U_i$ and $v_i\in V_i$ we have
\begin{equation}\label{eq:representationInfiniteVectors}
    u_i\in \frac{[C_U^-,C_U^+]}{k^s},\quad
    v_i\in \frac{[C_V^-,C_V^+]}{k^s},\quad
    \text{for $i\geq n$},
\end{equation}
where $C_U^-\leq C_U^+$ and $C_V^-\leq C_V^+$ are constants. In this manner, the tail of the Fourier expansion (for $i\geq n$) can be represented by specifying the decay rate $s$ of the coefficients and four additional constants $C_U^-, C_U^+, C_V^-, C_V^+$.
Lemmas \ref{lem:add} and \ref{lem:mult} are helpful in the  implementation of operations of element-wise multiplication and  addition for such class of infinite interval vectors.

Since for the Bruselator, we consider the system of two PDEs, the states are the sets $X = X_P+ X_Q$ which are the subsets of $Y$. Their elements are pairs $(u,v).$ To represent these pairs we use
the Fourier basis which consists of the eigenfunctions of the operator $L$, i.e. $(\sin(kx),0)$ and $(0,\sin(kx))$ for  $k\in \mathbb{N}^+$. The finite dimensional space  $H_P,$ in which the sets $X_P$ are always contained, is equal to
$\text{span}\{(\sin(x),0),(0,\sin(x)),\ldots ,(\sin(kx),0),(0,\sin(kx)) \}.$

The set $X$ has to be a subset of some representable infinite interval vector vector $(U,V)$.
In the algorithm, this means that $X_P$ is a subset of some cube in $H_P.$
The cubes are the simplest examples of possible representations of the set $X_P.$
More sophisticated parallelepiped-type objects can also used. They can reduce overestimation of the integration results for the rigorous ODE solvers and overcome so calling wrapping effect  (for example see \cite{CAPDArticle}).
The element $(u,v) \in H_Q$ belongs to $X_Q$ if it satisfies \eqref{eq:representationInfiniteVectors} with $s>1.$ This guarantees that $X_Q$ is a subset of $Y$. In that case we can estimate the result of series multiplication by using Lemmas \ref{lem:sinTimesSin} and \ref{lem:cosTimesSin}.
  Finally, we note that we may restrict our space to the space of functions $u,v$ with only odd nonzero coefficients. Then the set $X$ is a subset of $W := \{(u,v)\in Y: u_{2i} = v_{2i} = 0, \;\text{for}\;i\in\mathbb{N}\}.$
   The representation of set $X$ is roughly the same, except that we have to enforce that $u_i=v_i=0$ if $i$ is even.

\subsection{Computation of nonlinear terms.}\label{sect24} In the course of the algorithm for a given set $X=X_P+ X_Q$, we need to compute the set $X^1$ such that $ f(X)\subset X^1.$
This set, represented as $X^1 = X_P^1+ X_Q^1$  constitutes the estimates for $f(X)$ and hence it should be as small as  possible.
The set $X^1$ is used in further steps of algorithm. For the Brusselator problem we have that
\begin{equation*}
    f(u,v) = (u^2v + A\sin(x),Bu-u^2v).
\end{equation*}
For functions $(u,v)\in Y$ the components of $f$ can be represented in the following sine Fourier series with the coefficients $a_i, b_i$ dependent on $u, v$
\begin{equation*}
    u^2v + A\sin(x) = \sum_{i=1}^\infty a_i \sin(i x),\quad Bu-u^2v = \sum_{i=1}^\infty b_i \sin(i x).
\end{equation*}
The set $X^1_P$ is represented as the cube (or parallelogram) in $H_P$ and $X^1_Q$  is  described by the polynomial decay of  Fourier coefficients.
The first step of finding $X^1$ is estimating the square of $u$ which is represented in the cosine Fourier series. For this purpose we use Lemma  \ref{lem:sinTimesSin}.
Having computed the coefficients of $u^2$ we need to find the coefficients of $(u^2)v.$ To this end we use Lemma \ref{lem:cosTimesSin}. Finally, we use Lemma
\ref{lem:add} to compute the representation of sums of particular terms which appear in the definition of $f$.
We also need to compute the image $L(X)$ but as $L$ is a diagonal operator we only need to multiply every given coefficient by the corresponding eigenvalue of $L.$ The result of such multiplication is given in Lemma \ref{lem:mult}.
Additionally in the algorithm we  need the decomposition
 \begin{equation*}
     f(p+q) = f(p) +f_2(p,q),
 \end{equation*}
 where $p\in H_P$, $q\in H_Q$ and $f_2(p,q) = f(p+q)-f(p)$.  This decomposition is required for the formulation of the differential inclusion.
For the Brusselator system we can write
$f(u_P+u_Q,v_P+v_Q) = f(u_P,v_P) + f_2(u_P,v_P,u_Q,v_Q)$ where
\begin{equation*}
    f_2(u_P,v_P,u_Q,v_Q) = (
    (2u_Pu_Q+u_Q^2)(v_P+v_Q)+ u_P^2v_Q,
    Bu_Q-(2u_Pu_Q+u_Q^2)(v_P+v_Q)- u_P^2v_Q
    ).
\end{equation*}

\subsection{Computation of  the enclosure.}\label{sec:enclosure} We start this section with the definition of a enclosure.
\begin{definition}
The set $X([0,\tau])$ is a enclosure of the set $X^0 \subset Y$ for time $\tau > 0$ if $\varphi(t,X^0) \subset X([0,\tau])$ for every $t\in [0,\tau]$.
\end{definition}
The following Lemma can be used in order to validate if for the given set of initial data $X^0$ the set $X^0+Z$ is an enclosure.
\begin{lemma}\label{lemma:enclosure}
Assume that (A1) and (A2) hold and let $\{X^0_i\}_{i=1}^\infty$ be a countable family of intervals $X^0_i = [x_i^-,x_i^+]$ such that the set $X^0:= \set{\sum_{i=1}^\infty e_i x_i: x_i\in X_i^0}$ is bounded in $Y$. Moreover, define another set
$Z:= \set{\sum_{i=1}^\infty e_i z_i: z_i\in Z_i},$ also bounded in $Y$, where $Z_i = [z^-_i,z^+_i]$ are intervals containing zero.
 Let $x^0\in X^0$ and $n\in\mathbb{N}$ . We assume that for every $i\in \mathbb{N}$ there holds
\begin{equation}\label{eq:EnclosureCondition}
    [g_i^-,g_i^+]\cap
    [h_i^-,h_i^+]\subset \text{int } Z_i,
\end{equation}
where
\begin{equation}\label{eq:gminusgplus}
        g^-_i =
        \min_{t\in[0,\tau]}
        \left[(e^{\lambda_i t} - 1)
        \left(\frac{f^-_i}{\lambda_i} +x^-_i\right)\right],
        \quad
        g^+_i =
        \max_{t\in[0,\tau]}
        \left[(e^{\lambda_i t} - 1)
        \left(\frac{f^+_i}{\lambda_i} +x^+_i\right)\right],
    \end{equation}
\begin{equation}\label{eq:hminushplus}
    h^-_i = \min_{t\in[0,\tau]}
        t(\lambda_i(x_i^-+z_i^-) +f_i^-) ,
        \quad
        h^+_i =
        \max_{t\in[0,\tau]}
        t(\lambda_i(x_i^+ + z_i^+) +f_i^+) ,
\end{equation}
     and $f^-_i,f^+_i\in\mathbb{R}$  satisfy
    \begin{equation}\label{eq:fpm}
        f^-_i\leq f_i(X^0+Z)\leq f^+_i.
    \end{equation}
Then for every $x^0\in X^0$ there exists a continuous function $x:[0,\tau]\to Y$
which is a unique solution to $\eqref{eq:AbstractProblem}.$ Moreover for every $t\in[0,\tau]$ we have
\begin{enumerate}
    \item  $x_i(t)\in x^0_i+[0,t](f_i(X^0+Z)+\lambda_i(X^0+Z))$ for every $i\in \mathbb{N},$
    \item $x^-_i+g_i^-\leq x_i(t)\leq x^+_i+g_i^+$ for every $i\in \mathbb{N},$
    \item $x_i(t)\in e^{t \lambda_i} x^0_i + \frac{e^{\lambda_i t} -1}{\lambda_i}[f^-_i,f^+_i]$ for every $i\in \mathbb{N}.$
\end{enumerate}
\end{lemma}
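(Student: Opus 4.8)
The plan is to reduce everything to the scalar, non-autonomous ODE $\dot{x}_i=\lambda_i x_i+f_i(x(t))$ supplied by Remark~\ref{lem:AbstractFourierCoef}, to read off the three enclosures (1)--(3) from two independent ways of estimating that scalar trajectory, and finally to upgrade these \emph{a priori} bounds into a genuine non-escape statement by a continuation argument. First I would fix $x^{0}\in X^{0}$ and invoke Lemma~\ref{lem:solExistence} (assumptions (A1)--(A2) are in force) to obtain the unique maximal mild solution $x:[0,t_{max})\to Y$; by Remark~\ref{lem:AbstractFourierCoef} each coordinate satisfies \eqref{eq:formula1}. All quantitative work is then carried out under the running hypothesis that $x(s)\in X^{0}+Z$ for $s\le t$, which by \eqref{eq:fpm} forces $f_i(x(s))\in[f^{-}_i,f^{+}_i]$.

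Granting that hypothesis, the three conclusions are the \emph{same} trajectory enclosed in two complementary ways, each naturally phrased as a Minkowski shift of the box $X^{0}_i=[x^{-}_i,x^{+}_i]$. Conclusion (1) comes from the integral form $x_i(t)-x^{0}_i=\int_0^t(\lambda_i x_i(s)+f_i(x(s)))\,ds$: bounding the integrand over $X^{0}+Z$ gives $x_i(t)\in X^{0}_i+[0,t]\big(\lambda_i(X^{0}+Z)_i+[f^{-}_i,f^{+}_i]\big)$, and extremising over $t\in[0,\tau]$ yields the constants $h^{\pm}_i$ of \eqref{eq:hminushplus}, i.e.\ $x_i(t)\in X^{0}_i+[h^{-}_i,h^{+}_i]$. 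Conclusion (3) comes instead from variation of constants: since $e^{\lambda_i(t-s)}>0$ and $\int_0^t e^{\lambda_i(t-s)}\,ds=\tfrac{e^{\lambda_i t}-1}{\lambda_i}\ge 0$, freezing $f_i(x(s))$ into $[f^{-}_i,f^{+}_i]$ gives $x_i(t)\in e^{\lambda_i t}x^{0}_i+\tfrac{e^{\lambda_i t}-1}{\lambda_i}[f^{-}_i,f^{+}_i]$. Finally (2) is a purely algebraic rearrangement of (3): writing $e^{\lambda_i t}x^{0}_i=x^{0}_i+(e^{\lambda_i t}-1)x^{0}_i$, using $x^{0}_i\in[x^{-}_i,x^{+}_i]$ and the sign of $\tfrac{e^{\lambda_i t}-1}{\lambda_i}$, factors the bound as $x^{\pm}_i+(e^{\lambda_i t}-1)\big(\tfrac{f^{\pm}_i}{\lambda_i}+x^{\pm}_i\big)$, whose extremes over $t\in[0,\tau]$ are exactly the $g^{\pm}_i$ of \eqref{eq:gminusgplus}, so $x_i(t)\in X^{0}_i+[g^{-}_i,g^{+}_i]$.

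With the bounds in hand I would close by a standard connectedness/no-escape scheme. Let $T^{*}=\sup\{t\in[0,\min(\tau,t_{max})):x([0,t])\subset X^{0}+Z\}$; it is positive because $0\in Z_i$ gives $x^{0}\in X^{0}+Z$ and $x$ is $Y$-continuous. On $[0,T^{*})$ the running hypothesis holds, so (1)--(3) are valid there and, by continuity, at $T^{*}$. Combining the two enclosures of the previous paragraph yields $x_i(T^{*})\in X^{0}_i+\big([g^{-}_i,g^{+}_i]\cap[h^{-}_i,h^{+}_i]\big)$, which by hypothesis \eqref{eq:EnclosureCondition} sits in $X^{0}_i+\operatorname{int}Z_i\subset\operatorname{int}(X^{0}_i+Z_i)$; hence every coordinate of $x(T^{*})$ is \emph{strictly} interior to $X^{0}_i+Z_i$. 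Since $X^{0}+Z$ is bounded in $Y$, the solution cannot leave every bounded set, so by the maximality in Lemma~\ref{re:flowCon} we have $t_{max}>T^{*}$; were $T^{*}<\tau$, strict interiority together with $Y$-continuity would keep the trajectory inside $X^{0}+Z$ slightly beyond $T^{*}$, contradicting the definition of $T^{*}$. Thus $T^{*}=\tau$, the solution exists on all of $[0,\tau]$, and (1)--(3) hold throughout.

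The main obstacle is precisely this last step in the infinite-dimensional setting: strict interiority coordinate by coordinate, $x_i(T^{*})\in\operatorname{int}(X^{0}_i+Z_i)$, does \emph{not} by itself furnish a single $Y$-neighbourhood in which membership persists, because $\operatorname{dist}(x_i(T^{*}),\partial(X^{0}_i+Z_i))$ may tend to $0$ as $i\to\infty$. I would resolve this by splitting the modes: for the finitely many low modes $i\le N$, $Y$-continuity of $x$ gives the required margin directly, while for the tail the strong dissipativity $\lambda_i\to-\infty$ makes the variation-of-constants enclosure (3) contract $x_i(t)$ toward the small set $\tfrac{1}{|\lambda_i|}[f^{-}_i,f^{+}_i]$ uniformly in $t$, so membership in the tail of $X^{0}+Z$ is preserved automatically. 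The only other care needed is consistent interval bookkeeping of the signs of $\lambda_i$ and of $e^{\lambda_i t}-1$ when passing between \eqref{eq:gminusgplus} and \eqref{eq:hminushplus}; this is routine but must be done uniformly across all modes.
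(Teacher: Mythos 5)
Your derivations of the three per-coordinate enclosures (1)--(3) are sound and coincide with the computations inside the paper's proof, but the continuation (``no-escape'') step that is supposed to turn them into $x(t)\in X^0+Z$ on all of $[0,\tau]$ has a genuine gap --- exactly the one you flag yourself --- and your proposed repair does not close it, for two reasons. First, it invokes dissipativity, $\lambda_i\to-\infty$, which is not among the hypotheses of the lemma: only (A1)--(A2) are assumed, and (A1) merely forces $\lambda_i\leq C_1$ (apply $e^{tL}$ to $e_i$), so the eigenvalues may, for instance, all be positive and bounded, leaving no ``tail contraction'' to appeal to; dissipativity enters the paper only later, as assumption (II) of Lemma \ref{lem:whyItWorks}. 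Second, even granting $\lambda_i\to-\infty$, the tail argument is circular: enclosure (3) was derived under the running hypothesis $x(s)\in X^0+Z$ for $s\leq t$, so it is available only up to $T^*$; invoking it to control $x_i(t)$ slightly beyond $T^*$ presupposes what you are trying to prove, and condition \eqref{eq:EnclosureCondition} supplies no uniform-in-$i$ margin inside $\operatorname{int} Z_i$ that could substitute for it. (Your appeal to ``maximality in Lemma \ref{re:flowCon}'' to conclude $t_{max}>T^*$ also uses a boundedness-implies-continuation dichotomy that the paper never establishes, though this is a lesser issue.)

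The paper sidesteps this difficulty by never running the escape argument on the solution itself. It works with the Duhamel operator $T(g)(t)=e^{Lt}x^0+\int_0^te^{L(t-s)}f(g(s))\,ds$ on the set $S_\tau$ of continuous curves starting at $x^0$ that remain in $X^0+Z$ on \emph{all} of $[0,\tau]$. For $g\in S_\tau$ the forcing satisfies $f_i(g(s))\in[f_i^-,f_i^+]$ for every $s\in[0,\tau]$ a priori, so $y=T(g)$ solves, coordinate by coordinate, a scalar linear ODE with forcing confined to a known interval on the whole time interval; your two enclosures then show that each coordinate $y_i$, treated separately, can never reach the boundary of $[x_i^-+z_i^-,x_i^++z_i^+]$, and since this argument runs for each $i$ independently, no uniformity in $i$ is ever needed. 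This yields $T(S_\tau)\subset S_\tau$; a contraction estimate in the weighted norm $\|y\|_\alpha=\sup_{t\in[0,\tau]}e^{-\alpha t}\|y(t)\|_Y$ and the Banach fixed point theorem then produce the solution on $[0,\tau]$ already lying inside $X^0+Z$, so existence on $[0,\tau]$ and the bounds (1)--(3) emerge simultaneously, with no continuation argument at all. If you wish to salvage your direct approach, the cleanest repair is precisely this change of viewpoint: replace ``the solution restricted to $[0,T^*]$'' by ``an arbitrary curve known to lie in $X^0+Z$ on all of $[0,\tau]$,'' i.e.\ pass to the fixed-point formulation.
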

\begin{proof}
We define the operator $T:C([0,\tau];Y)\to C([0,\tau];Y) $ in the following way
\begin{equation}
    T(g)(t) = e^{Lt}x^0 + \int_0^t e^{L(t-s)}f(g(s))ds.
\end{equation}
We consider the set
\begin{equation*}
    S_\tau = \{g\in C([0,\tau];Y):g(0)=x^0\; \text{and for every $t\in[0,\tau]$ we have } g(t)\in X^0+ Z\}.
\end{equation*}
We prove first that for every $x^0\in X^0$ the mapping $T$ leads from $S_\tau$ to itself.
We observe that $y_i(t) = T_i(g)(t)$ is a solution to the non-autonomous ODE

\begin{equation}
    \frac{d}{dt}y_i(t) = \lambda_iy_i(t) + f_i(g(t)),
\end{equation}
for every $i\in\mathbb{N.}$
We will prove that $y_i(t) \in [x^-_i+z^-_i,x^+_i +z^+_i ],$ for every $t\in[0,\tau].$ For the sake of contradiction assume that $\tau_1 < \tau,$ where
$\tau_1 := \sup_{t\in[0,\tau]}\{y_i(s)\in [x^-_i+z^-_i,x^+_i +z^+_i ]\ \text{for every } s\leq t \}.$ We observe that
\begin{equation*}
    y_i(t) = x_0+ \int_0^t\lambda y_i(s)+f_i(g(s))ds,\quad
    y_i(t) = e^{\lambda_i t} x_0+ \int_0^t e^{\lambda(t-s)}f_i(g(s))ds.
\end{equation*}
So for $t\leq \tau_1$ we have
\begin{equation*}
    y_i(t)\leq x^0_i+\int_0^t(\lambda (x^+_i+z^+_i)+f_i^+)ds\leq x^+_i + h_i^+.
\end{equation*}
Similarly for $t\leq \tau_1$ we have  $x^-_i + h_i^-\leq y_i(t).$
We observe that for $t\leq \tau_1$
\begin{equation*}
y_i(t)\leq e^{\lambda_i t}x^0_i+ \int_0^t e^{\lambda_i(t-s)}f_i^+ds\leq
e^{\lambda_i t}x^+_i + \frac{e^{\lambda_i t}-1}{\lambda_i}f_i^+ = x^+_i + g_i^+.
\end{equation*}
We also see that $x^-_i + g_i^-\leq y_i(t).$ This implies that for every
$t\in [0,\tau_1]$ we have $x^-_i+z^-_i <y_i(t)<x^+_i +z^+_i.$ By the continuity of $y_i(t)$ we can find $\delta>0$ such that $y_i(t)\in [x^-_i+z^-_i,x^+_i +z^+_i ]$ for every $t\leq\tau_1+\delta.$
This is a contradiction, so $\tau=\tau_1$. We deduce that $T(S_\tau)\subset S_\tau.$
 To show that $T$ is a contraction  we equip the space  $C([0,\tau],Y)$ with the  norm $\|y\|_{\alpha} = \sup_{t\in[0,\tau]} \norm{y(t)}_Y e^{-t\alpha},$ where $\alpha>0$ is an appropriately chosen positive constant. For $g_1,g_2\in S_\tau$ we have
\begin{align*}
     \left\|\int_0^t e^{L(t-s) }( f(g_2(s)) -f(g_1(s)))ds\right\|_Y
     &\leq e^{t C_1} \int_0^te^{s\alpha } e^{-s\alpha} \norm { f(g_1(s)) -f(g_2(s)))}_Y  ds\\
      &\leq
      \frac{e^{t C_1}  e^{t\alpha }}{ \alpha}
      \sup_{s\in[0,t]} \left(e^{-s\alpha}  \norm { f(g_1(s)) -f(g_2(s))}_Y\right)
      \\
      &\leq \frac{C(R) e^{t C_1}  e^{t\alpha }}{ \alpha}
      \sup_{s\in[0,t]} e^{-s\alpha}  \norm { g_1(s) -g_2(s)}_Y.
\end{align*}
If we pick
$\alpha = \sup_{s\in[0,\tau]} \frac{1}{2 C(R) e^{s C_1}  }$ then
\begin{equation*}
    \left\|T(g_1) - T(g_2) \right\|_\alpha \leq \sup_{s\in[0,\tau]} \frac{C(R)e^{sC_1} }{\alpha}\norm{g_1 - g_2 }_{\alpha}\leq\frac{1}{2}\norm{g_1 - g_2 }_{\alpha}.
\end{equation*}
where $R>0$ is a bound in the norm $\norm{.}_Y$ of elements of the set $X_0+Z.$ This shows that $T$ is a contraction. From the Banach fixed point theorem, we deduce that $T$ must have a unique fixed point in $S_\tau$.  Now, formulae (1)-(3) are straightforward, which completes the proof.
\end{proof}
\begin{remark}
The assertion of  Lemma \ref{lemma:enclosure} holds if we replace conditions $(A1)-(A2)$ by their more general counterparts $(B1)-(B3)$.
\end{remark}
\begin{proof}
We consider the same operator $T$ and the set $S_\tau$ as in proof of Lemma \ref{lemma:enclosure}. The argument that $T(S_\tau)\subset S_\tau $ is the same, only the proof of contractivity changes. We will show that in this more general case we can find $\alpha>0$ such that $T$ is a contraction with respect to the norm $\norm{y}_{\alpha} = \sup_{t\in [0,\tau]} e^{-t\alpha}\norm{y(t)}_{Y}.$
For $g_1,g_2\in S_\tau$ we compute
\begin{align*}
     \left\|\int_0^t e^{L(t-s) }( f(g_2(s)) -f(g_1(s)))ds\right\|_Y
     &\leq C_3 e^{t C_4} \int_0^t \frac{1}{(t-s)^{\gamma}}
     e^{s\alpha } e^{-s\alpha} \norm { f(g_1(s)) -f(g_2(s)))}_{Y^1}  ds\\
      &\leq
      C_3e^{\tau C_4}C(R)
  \norm { g_1 -g_2}_{\alpha}
      \int_0^t\frac{1}{(t-s)^\gamma}e^{\alpha s} ds
      \\
      &=
      C_3e^{\tau C_4}C(R)e^{t\alpha}\norm{g_1-g_2}_{\alpha}
      \int_0^t\frac{1}{s^\gamma}e^{-s\alpha} ds
      .
\end{align*}
where $R>0$ is a bound of the norm $\norm{
\cdot
}_Y$ satisfied by all elements of the set $X_0+Z.$ For every $\delta\leq t$ we have
\begin{equation*}
    \int_0^t\frac{1}{s^\gamma}e^{-s\alpha} ds\leq \int_0^\delta
    \frac{1}{s^\gamma}ds+\frac{1}{\delta^\gamma}\int_\delta^te^{-s\alpha}ds\leq
    \frac{\delta^{1-\gamma}}{1-\gamma} +\frac{1}{\delta^\gamma}\frac{1}{\alpha}
\end{equation*}
So, if we pick $\delta,\alpha$ such that
\begin{equation*}
    \delta^{1-\gamma}\leq\frac{1}{4(1-\gamma)}C_3 e^{\tau C_4} C(R),\quad \alpha \geq \frac{1}{4\delta^\gamma}C_3e^{\tau C_4}C(R),
\end{equation*}
then for every $t\leq \tau$ we have
\begin{equation*}
    \left\|\int_0^t e^{L(t-s) }( f(g_2(s)) -f(g_1(s)))ds\right\|_Y \leq \frac{1}{2}e^{t\alpha}\norm{g_1-g_2}_{\alpha}.
\end{equation*}
Hence, $T$ is a contraction on the set $S_\delta$ equipped with the norm $\norm{\cdot}_\alpha$. The Banach fixed point theorem gives the unique fixed point of the map $T$, and the proof is complete.
\end{proof}
   In Lemma \ref{lemma:enclosure} the sets $X^0,Z$ are
  representable infinite interval vectors.
 If $X^0$  is not a representable infinite interval vector, we can still find a enclosure, provided we can choose the representable interval vector $X^1$ such that $X^0 \subset X^1.$
 The following algorithm takes as an input the infinite interval vector $X^0,$ the infinite interval vector $Z$ such   $0\in Z_i$ for every $i\in\mathbb{N},$  and the step $\tau>0.$ The algorithm  validates if $X^0 + Z$ is an enclosure for $\tau$ by checking assumption \eqref{eq:EnclosureCondition} of Lemma  \ref{lemma:enclosure}.

 \begin{enumerate}
     \item Find the infinite vectors $V^f$ and $V^L$
     such that $f(X+Z) \subset V^f $ and
     $L(X+Z)\subset V^L.$
     \item Construct the infinite vector $V^{\text{ND}}$ such that
     $[0,\tau](V^L+V^f)\subset V^{\text{ND}} $
    \item Compute the infinite vectors $V^{E_1}$ and $V^{L^{-1}}$ which satisfy
    \begin{equation*}
        e^{[0,\tau]\lambda_i} - 1 \subset (V^{E_1})_i,
        \qquad
        \frac{1}{\lambda_i} \in (V^{L^{-1}})_i \ \ \textrm{for every}\ \ i\in \{1,\dots\}.
    \end{equation*}
     \item Compute the infinite vectors $V^{\text{D1}}$ and $V^{\text{D2}}$ which satisfy
     \begin{equation*}
         V^{E_1}*((V^f)^+*V^{L^{-1}} + X^+  ) \subset V^{\text{D1}},
         \quad
         V^{E_1}*((V^f)^-*V^{L^{-1}} + X^- ) \subset V^{\text{D2}}.
     \end{equation*}
     \item Find infinite interval vectors  $V^{\text{D}}$ such that
     \begin{equation*}
        \text{conv}\{V^{\text{D1}}, V^{\text{D2}} \} \subset V^{\text{D}}.
     \end{equation*}
     \item Compute $Z^1$ such that  $V^\text{ND}\cap V^\text{D}\subset Z^1.$
     \item Check if $Z^1 \subset_{\text{int}} Z$ holds.

 \end{enumerate}
 For every $i\in\mathbb{N}$ the interval $[h_i^-,h_i^+]$ from  \eqref{eq:hminushplus} is contained in the interval $V^{ND}_i,$ where $V^{ND}$ is an infinite interval vector obtained in step (2).
 Similarly for every $i\in\mathbb{N}$ the interval $[g_i^-,g_i^+]$ from \eqref{eq:gminusgplus}  is contained in the interval $V^{D}_i,$ where $V^{D}$ is an
 infinite interval vector from step (5). If condition in step (7) is satisfied, we have that $V^{ND}_i\cap V^{ND}_i \subset \text{int} Z_i$ for every $i\in\mathbb{N},$ which implies that assumptions of Lemma \ref{lemma:enclosure} hold and the set $X^0 + Z$ is an enclosure for our initial data $X^0$ and the time step $\tau>0$ .

 If condition $(7)$ does not hold, we can modify the set $Z$ and repeat the validation procedure. The reasonable guess is to take $Z = [0,c]Z^1,$ where $c>1.$ Another possibility is to decrease the time step. The following lemma shows that for certain class of sets  it is always possible to find the enclosure using the above algorithm with sufficiently small time step $\tau$.

 \begin{lemma}
 \label{lem:whyItWorks}
Let $\{a_i\}_{i=1}^\infty$ be a given sequence of positive numbers. Assume that
\begin{itemize}
    \item[(I)] Conditions (A1)-(A2) or (B1)-(B3) hold.
    \item[(II)] For some $i_0\in\mathbb{N}$ we have $\lambda_i < 0$ for every $i\geq i_0.$
    \item[(III)] For every bounded set $A\subset Y$ such that
    \begin{equation*}
        \sup_{x\in A} |x_i| = O(a_i),
    \end{equation*}
    the following holds
    \begin{equation*}
        \sup_{x\in A} \left|\frac{f_i(x)}{\lambda_i}\right| = o(a_i).
    \end{equation*}
\end{itemize}
 Let  $\{X^0_i\}_{i=1}^\infty$ and $\{Z_i\}_{i=1}^\infty$ be sequences of intervals such that
\begin{itemize}
    \item[(IV)] The set $X^0:= \set{\sum_{i=1}^\infty e_i x_i: x_i\in X_i^0}$ is bounded in $Y$. Intervals $X_i^0$ contain zero for
    every $i\geq i_0,$ where $i_0\in\mathbb{N}$
    and
    \begin{equation*}
         \quad \sup_{x_i\in X_i^0}|x_i| = O(a_i).
    \end{equation*}
    \item[(V)] The set $Z:= \set{\sum_{i=1}^\infty e_i z_i: z_i\in Z_i}$ is bounded in $Y$. Every interval $Z_i$ contains zero and
    \begin{equation*}
        z_i^+ = \Theta(a_i)
        \quad \text{and}\quad
        z_i^- = \Theta(a_i).
    \end{equation*}
\end{itemize}
Under these assumptions there exists $\tau>0$ such that condition \eqref{eq:EnclosureCondition} of Lemma \ref{lemma:enclosure} is satisfied, and, in consequence, for every $x^0\in X^0$ there exists a continuous function $x:[0,\tau] \to Y$ which is a unique solution to \eqref{eq:AbstractProblem} and the following estimates hold
for every $t\in[0,\tau]$
\begin{enumerate}
	\item  $x_i(t)\in x^0_i+[0,t](f_i(X^0+Z)+\lambda_i(X^0+Z))$ for every $i\in \mathbb{N},$
	\item $x^-_i+g_i^-\leq x_i(t)\leq x^+_i+g_i^+$ for every $i\in \mathbb{N},$
	\item $x_i(t)\in e^{t \lambda_i} x^0_i + \frac{e^{\lambda_i t} -1}{\lambda_i}[f^-_i,f^+_i]$ for every $i\in \mathbb{N},$
\end{enumerate}
where $g_i^-, g_i^+$ are given by \eqref{eq:gminusgplus} and $f_i^-, f_i^+$ are given by \eqref{eq:fpm}.  
\end{lemma}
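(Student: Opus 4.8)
The plan is to reduce the whole statement to verifying hypothesis \eqref{eq:EnclosureCondition} of Lemma \ref{lemma:enclosure} for a suitable $\tau>0$: once that inclusion holds for all $i$, assumption (I) lets us invoke Lemma \ref{lemma:enclosure} (through its remark in the (B1)--(B3) case) and the existence of the unique solution together with the estimates (1)--(3) is immediate. To produce $\tau$ I would split the Fourier modes into an infinite \emph{tail} $i\geq i_1$ and a finite \emph{head} $i<i_1$ and handle the two regimes by entirely different mechanisms: the tail by the dissipative decay encoded in $g_i^\pm$, uniformly in $\tau$, and the head by continuity as $\tau\to 0$. Throughout I would fix $f_i^\pm$ to be the exact infimum and supremum of $f_i$ over the set $A:=X^0+Z$, which is bounded in $Y$ and satisfies $\sup_{x\in A}|x_i|=O(a_i)$ by (IV) and (V); hypothesis (III) then yields $|f_i^\pm/\lambda_i|=o(a_i)$.

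The heart of the argument is the tail estimate $|g_i^\pm|=o(a_i)$. Write $c_i^+=f_i^+/\lambda_i+x_i^+$ and use that for $i\geq i_0$ we have $\lambda_i<0$, so $e^{\lambda_i t}-1\in[-1,0]$ for $t\in[0,\tau]$. If $c_i^+\geq 0$, the maximand in \eqref{eq:gminusgplus} is nonpositive and attains $0$ at $t=0$, so $g_i^+=0$. If $c_i^+<0$, then since $X_i^0$ contains zero we have $x_i^+\geq 0$, whence $0\leq x_i^+<-f_i^+/\lambda_i$ and therefore $g_i^+\leq|c_i^+|\leq|f_i^+/\lambda_i|=o(a_i)$. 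The symmetric analysis of $c_i^-=f_i^-/\lambda_i+x_i^-$ (using $x_i^-\leq 0$) gives $|g_i^-|\leq|f_i^-/\lambda_i|=o(a_i)$. This is the key cancellation: although $x_i^\pm$ is only $O(a_i)$, the sign structure of $c_i^\pm$ combined with the dissipative factor collapses $g_i^\pm$ to $o(a_i)$. Since by (V) the endpoints satisfy $z_i^-<0<z_i^+$ with $|z_i^\pm|$ of exact order $a_i$, there is an index $i_1\geq i_0$ such that $[g_i^-,g_i^+]\subset\text{int } Z_i$ for all $i\geq i_1$, hence $[g_i^-,g_i^+]\cap[h_i^-,h_i^+]\subset\text{int } Z_i$ there, \emph{independently of} $\tau$.

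For the finitely many head modes $i<i_1$ I would use that both $g_i^\pm$ and $h_i^\pm$ tend to $0$ as $\tau\to 0$: the expressions in \eqref{eq:gminusgplus} vanish at $t=0$ and are continuous in $t$, while the expressions in \eqref{eq:hminushplus} carry an explicit factor $t\leq\tau$. Because $0\in\text{int } Z_i$ for every $i$ (as $z_i^\pm$ are nonzero of order $a_i$), each of these intervals, and a fortiori their intersection, lies in $\text{int } Z_i$ once $\tau$ is small enough; taking the minimum of the admissible thresholds over the finite set $\{i<i_1\}$ produces a single $\tau>0$. Combining the two regimes, \eqref{eq:EnclosureCondition} holds for this $\tau$ and all $i\in\mathbb{N}$, and Lemma \ref{lemma:enclosure} completes the proof.

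I expect the main obstacle to be the tail estimate for $g_i^\pm$ and making its cancellation rigorous uniformly in $i$. The naive bound $|g_i^+|\leq|c_i^+|=O(a_i)$ is \emph{not} strong enough to fit inside $Z_i=\Theta(a_i)$, so one must genuinely exploit that a ``wrong'' sign of $c_i^+$ sends the extremum to $t=0$, while a ``right'' sign forces $x_i^+$ itself to be dominated by $|f_i^+/\lambda_i|=o(a_i)$. The remaining bookkeeping --- choosing $f_i^\pm$ as exact extrema so that (III) applies, and checking that $A=X^0+Z$ meets the $O(a_i)$ growth hypothesis --- is routine.
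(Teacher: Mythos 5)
Your proposal is correct and takes essentially the same route as the paper's proof: the tail modes $i\geq i_1$ are controlled uniformly in $\tau$ by exploiting the sign structure ($x_i^+\geq 0$, $x_i^-\leq 0$, $e^{\lambda_i t}-1\leq 0$) together with hypothesis (III) applied to $X^0+Z$ to force $|g_i^\pm|\leq |f_i^\pm/\lambda_i| = o(a_i) $, which fits inside $Z_i=\Theta(a_i)$ independently of $\tau$, while the finitely many head modes are absorbed into $\text{int}\,Z_i$ by taking $\tau$ small. Your case analysis on the sign of $c_i^\pm$ is simply a more detailed rendering of the paper's one-line bound $g_i^+\leq \max_{t\in[0,\tau]}\left[(e^{\lambda_i t}-1)f_i^+/\lambda_i\right]\leq |f_i^+/\lambda_i|$, so the two arguments coincide in substance.
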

\begin{proof}
Observe that we can find $i_1$ such that
\begin{equation}\label{enclosure_1}
   [g_i^-,g_i^+] \subset \text{int}\; Z_i = (z_i^-,z_i^+),
\end{equation}
for every $i\geq i_1$ and $\tau>0.$
Indeed for big enough $i$ we have
\begin{equation*}
    g_i^+=\max_{t\in[0,\tau]}
        \left[(e^{\lambda_i t} - 1)
        \left(\frac{f^+_i}{\lambda_i} +x^+_i\right)\right]
        \leq
         \max_{t\in[0,\tau]}
        \left[(e^{\lambda_i t} - 1)
        \frac{f^+_i}{\lambda_i}\right] \leq \left|\frac{f_i^+}{\lambda_i}\right|< z_i^+.
\end{equation*}
The first inequality follows from the fact that $x_i^+$ is positive and term $(e^{\lambda_i t} - 1)$ is negative for sufficiently large $i$. The second inequality follows from the assumption on $\lambda_i.$ The third one is a consequence of the fact that $\sup_{x\in X} \left|\frac{f_i(x)}{\lambda_i}\right| = o(a_i)$ and $z^+_i = \Theta(a_i).$
We have shown that for $i\geq i_1$ we have the inclusion \eqref{enclosure_1}. For the remaining, ''low'', indexes $i$,  we take  $\tau>0$ such that
\begin{equation*}
    \tau < \sup_{i<i_1}\frac{|z_i^-|}{ \left|(\lambda_i(x_i^-+z_i^-) +f_i^-)\right| + 1}
    \quad \text{and}\quad
    \tau < \sup_{i<i_1} \frac{|z_i^+|}{\left|(\lambda_i(x_i^++z_i^+) +f_i^+)\right| + 1}.
\end{equation*}
 For such choice of $\tau$ we have
 \begin{equation*}
     [h_i^-,h_i^+]\subset \text{int}\;Z_i\quad\text{for}\quad i< i_1,
 \end{equation*}
 which ends the proof.
\end{proof}
\begin{remark}
	The key assumption of the above lemma is (III). The sequence $a_i$ signifies some decay of Fourier coefficients of the elements $x$ of a set $A$. The decay of Fourier coefficients of $\frac{f_i(x)}{\lambda_i}$ for $x\in A$ must be essentially faster than $a_i$. Assume for example that $a_i = \frac{1}{i^s}$ and that $f$ is a polynomial. Then, the decay of the coefficients of $f_i(x)$ is the same as that of $x$, that is also $\frac{1}{i^s}$ (see Section \ref{sec:algebra}). If the leading operator is dissipative, i.e. $\lambda_i \to -\infty$, then, after the division by $\lambda_i$ this decay will be essentially faster then $a_i$. Thus, dissipativity of $L$ guarantees that the enclosure can be always found, and a step of the rigorous integration algorithm can be performed. The drawback is, that the length of a time-step $\tau$ can be very small in Lemma \ref{lem:whyItWorks}. We expect that such situation holds for dissipative problems with finite type blow-up, for instance for the problems governed by the Fujita equation $u_t=u_{xx}+|u|^{p-1}u$, cf. \cite{Fujita}.   
\end{remark}
Using the above lemma we deduce that for the Brusselator system we can always find an enclousure for any set described in the Section \ref{sec:descritionSet}, by choosing sufficiently short time-step.

  \begin{remark}
 For every $C>0,\;\varepsilon>0,\;k>1$ there exists $\tau>0$ such that for every initial data $(u^0,v^0)\in Y$ satisfying
 \begin{equation*}
     |u^0_k|\leq\frac{C}{|k|^s}
     \quad\text{and}\quad
     |v^0_k|\leq\frac{C}{|k|^s},
 \end{equation*}
 the solution $(u(t),v(t))$ to \eqref{eq:BrusselatorPDE} satisfies
 \begin{equation*}
     |u_k(t)|\leq\frac{C+\varepsilon}{|k|^s}
     \quad\text{and} \quad
     |v_k(t)|\leq\frac{C+\varepsilon}{|k|^s}
     \quad\text{for}\;t\in[0,\tau].
 \end{equation*}

 \end{remark}
 One can easily construct an example of a problem without this property. To this end let us consider the logistic model of population growth with diffusion and homogeneous Dirichlet boundary conditions. The corresponding equation together with the initial and boundary conditions are the following
\begin{equation}\label{eq:logisticEquation}
\begin{cases}
 u_t = d  u_{xx} + u- u^2\   \text{for}\; (x,t)\in [0,\pi]\times(0,\infty),\\
u(t,x) = 0\; \text{for}\; (x,t)\in \{0,\pi\}\times (0,\infty),
\\u(0,x) = u^0(x),
\end{cases}
\end{equation}
 for $d>0.$ As we impose the Dirichlet boundary condition, from the equation we observe that $u_{xx}$ is always $0$ at the boundary. But, in general, the fourth derivative $u_{xxxx}$ can have nonzero values at the boundary.  This implies that the coefficients in the expansion of the solution in the sine trigonometric series  cannot have the arbitrarily fast polynomial decay.

 We can write the equation  in above example in the form \eqref{eq:AbstractProblem} with $L = d u_{xx}+u$ and $f(u)=u^2.$ For the initial condition $u_0 = \sqrt{\frac{\pi}{8}}\sin(x)$ we have that
 \begin{equation*}
      f(u_0) = \frac{\pi}{8}(1-\cos(2x)) = \sum_{i=1}^\infty \frac{(-1+(-1)^{i} )}{-4n+n^3}\sin(ix).
 \end{equation*}
So using Lemma \ref{lemma:enclosure} we have a chance to find the time step $\tau>0$ and $C>0$ such that the solution satisfies
\begin{equation*}
    |u_k(t)|\leq \frac{C}{k^s}\quad \text{for}\quad t\in[0,\tau],
\end{equation*}
 only for $s\in(1,5).$ Similar difficulty would occur if in the considered Brusselator system the term $\sin(x)$ would be replaced with the original term from the Brusselator ODE,  that is the constant $1$.

 \subsection{Evolution of sets}\label{sec:Evolving set}
 We assume that $X([0,\tau]) = X_P([0,\tau])+ X_Q([0,\tau])$ is an enclosure for the initial data $X = X_P+ X_Q$ and the time step $\tau>0.$  The following procedure is used to find the set $X(\tau)$ such that $S(\tau)X\subset X(\tau).$

\begin{enumerate}
    \item Compute the infinite interval vector $V$ such that
    $f_2(X_P([0,\tau]),X_Q([0,\tau]))\subset V$.
    \item Solve the system of differential inclusions
    \begin{equation}\label{inclusion}
        \frac{d}{dt}Px \in L Px+Pf(Px) + PV,
    \end{equation}
    with initial condition in the set $X_P$. Note that if the initial data belongs to the set $X_P$ which has more complicated structure then a vector of intervals (for example it can be a parallelepiped) this step, can be realized without enclosing the initial data in an interval vector. This will result in sharper estimates.
    As the result, the solver will generate the set $X_{P1}$ such that $Px(\tau) \in X_{P1}$ for every $x(0) \in X_P+X_Q$.
    \item Compute the infinite interval vector $V^2$ such that $f(X([0,\tau])) \subset V^2.$
        \item Compute the infinite vectors $V^{E_1}, V^{E_2}$ and $V^{L^{-1}}$ which satisfy
    \begin{equation*}
        e^{\tau\lambda_i} -1 \in (V^{E_1})_i,
        \qquad
        e^{\tau\lambda_i}  \in (V^{E_2})_i,
        \qquad
        \frac{1}{\lambda_i} \in (V^{L^{-1}})_i \ \ \textrm{for every}\ \ i\in \{1,\dots\}.
    \end{equation*}

    \item Compute $V^3$ such that
        $V^{E_2} * X + V^{E_1} * V^{L^{-1}} * V^2 \subset V_3$.

    \item Return the set $(PV_3 \cap X_{P1}) + QV_3$.
\end{enumerate}

Note that the interval vectors  $V^{E_1}$ both in the algorithm for finding the enclosure and evolving the set are not representable. However, we still need the data structures to represent them as infinite vectors and perform the operations such as addition or elementwise multiplication. Particular way of representing such sets and their operations with polynomial decay or growth of coefficients  is addressed in Section \ref{sec:algebra}.

The detailed description how to rigorously solve the differential inclusion can be found in \cite{ControlKapelaZgliczynski}.
For the step (2) we can consider differential inclusion only on part of variables represented explicitly.
This can be beneficial for the computational time.

As we are using the infinite interval vectors \eqref{eq:representationInfiniteVectors} we need to determine the decay rate $s$ for $V^{E_2}.$ It is possible to impose arbitrarily fast polynomial decay in this term. On the other hand, the vector $V^{E_1}$ can be represented by \eqref{eq:representationInfiniteVectors} with $s=0$. Finally, the representation of $V^{L^{-1}}$ decays with some given $s_1>0$ determined by the decay of the inverses of the eigenvalues of $L$ (which have to decay to zero as the considered problem is disspative). So, $V_3$ can have higher $s$ then the initial data $X$. Maximal increase of the rate $s$ in $V_3$ is equal to $s_1$. Theoretically in every time step it is possible to increase the decay $s$ by the value of $s_1.$ But it can lead to overestimates on some variables so it is sometimes beneficial to keep the old $s.$ In the code we are using heuristic algorithm which estimates upper bound of resulting series to decide if it worth to increase  the exponent.

\section{Computer assisted proof of periodic orbit existence}\label{sec:comAssisedProof}
In this section we describe the computer assisted proof of Theorem \ref{th:BrusselatorPeriodicOrbit}.
\subsection{Overview of the proof}
The proof of Theorem \ref{th:BrusselatorPeriodicOrbit}
is based on the Schauder fixed point theorem. We check that for some previously prepared Poincar\'{e} map $\mathcal{P}$ the image of appropriately chosen compact initial set $X^0$ is contained in itself, i.e. $\mathcal{P}(X^0)\subset X^0.$
To validate the inclusion, we apply the previously described integration algorithm and Lemma \ref{lem:cross} to address the issue of crossing the section. As the preliminary step,
we construct a set of initial data $X^0$ and a section $l$ from the analysis of results of approximate numerical integration of the Galerkin projection both of the Brusselator equation and its variational equation.

\subsection{Poincar\'{e} map - crossing the section}\label{sec:crossing}
The problem of finding the periodic orbit for the system \eqref{eq:AbstractProblem} is reduced to finding the fixed point of the Poincar\'{e} map. In this section we present the algorithm of rigorous computatation of the Poincar\'{e} map and justify its correctness. While the same algorithm is  for ODEs can be found in \cite[Section 5]{ZLohner}, and its infinite dimensional version appears in \cite[Section 3]{ZPKuramotoII}, we present it for the exposition completeness. Note that although the results of the present section follow closely the concepts of \cite{ZLohner, ZPKuramotoII}, the results in \cite[Lemma 6 and Theorem 8]{ZPKuramotoII} use the Brouwer fixed point theorem, and the argument on passing to the limit in Galerkin projections to get the fixed point, and our Theorem \ref{th:PoincareFixedPoint} uses the Schauder fixed point theorem. 
 
We assume as in Section \ref{sec:AlgorithmIntegration} that we consider abstract  problem \eqref{eq:AbstractProblem}. The spaces $H,Y$ are also the same as in Section \ref{sec:AlgorithmIntegration}. By $\varphi$ we define the local semiflow which is given by solutions of
\eqref{eq:AbstractProblem}.
The following lemma allows us to check if evolution of initial set transversally intersects the section, which is a kernel of some affine map.
\begin{lemma}\label{lem:cross}
Let $l:Y\to \mathbb{R} $ be a function given by the formula
$l(x) = \sum_{i=1}^n \alpha_i ( x_i -\beta_i) $, where $\alpha_1\ldots,\alpha_n,\beta_1,\ldots,\beta_n\in \mathbb{R}.$
Let $X$ be a bounded set in $Y.$ We assume that for some $\tau>0$ the following conditions hold:
\begin{enumerate}
    \item For every $x\in X$ we have $l(x)<0\; \text{and}\; l(\varphi(\tau,x))>0.$
\item For every $x\in \varphi([0,\tau],X)$ we have $\sum_{i=1}^n\alpha_i F_i(x) > 0.$
\end{enumerate}
Then for every $x\in X$ there exists a unique $\tau_l(x)\in(0,\tau)$ such that $l(\varphi(\tau_l(x),x))=0.$ Moreover the function
$\tau_l:X\to (0,\tau)$ is continuous in the norm of the space $Y.$
\end{lemma}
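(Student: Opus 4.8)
The plan is to reduce the statement to a one-dimensional problem by studying, for a fixed $x\in X$, the scalar function $g(t) = l(\varphi(t,x))$ on $[0,\tau]$. Since $l$ depends only on the finitely many leading Fourier coordinates, $l(y) = \sum_{i=1}^n \alpha_i(y_i-\beta_i)$, and each coordinate map $y\mapsto y_i$ is a bounded linear functional on $Y$, the function $g$ is continuous on $[0,\tau]$. Moreover, by Remark~\ref{lem:AbstractFourierCoef} each coordinate $t\mapsto\varphi_i(t,x)$ solves $\frac{d}{dt}\varphi_i(t,x)=F_i(\varphi(t,x))$, so the finite linear combination $g$ is differentiable with
$$
g'(t) = \sum_{i=1}^n \alpha_i\, F_i(\varphi(t,x)).
$$

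The existence and uniqueness of $\tau_l(x)$ then follow from elementary real analysis. Assumption (1) gives $g(0)=l(x)<0$ and $g(\tau)=l(\varphi(\tau,x))>0$, so by the intermediate value theorem $g$ has at least one zero in $(0,\tau)$. Assumption (2), applied at the point $\varphi(t,x)\in\varphi([0,\tau],X)$, yields $g'(t)>0$ for every $t\in[0,\tau]$; hence $g$ is strictly increasing and its zero is unique. This defines $\tau_l(x)\in(0,\tau)$ unambiguously, and the strict sign conditions at the endpoints guarantee that the zero is interior (so $\tau_l(x)\neq 0,\tau$).

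For the continuity of $\tau_l$ I would argue by sequences, leaning on the uniqueness just established. Take $x^{(k)}\to x$ in $Y$ and set $t_k=\tau_l(x^{(k)})\in(0,\tau)$. Being bounded, $\{t_k\}$ has a subsequence $t_{k_j}\to\bar t\in[0,\tau]$. Using continuity of the local semiflow jointly in both arguments — splitting $\norm{\varphi(t_{k_j},x^{(k_j)})-\varphi(\bar t,x)}_Y$ into continuous dependence on the initial datum (uniformly for $t$ in the compact interval $[0,\tau]$) plus continuity in $t$ for the fixed datum $x$ — together with continuity of $l$, I pass to the limit in $l(\varphi(t_{k_j},x^{(k_j)}))=0$ to get $l(\varphi(\bar t,x))=0$. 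By uniqueness of the zero of $g(\cdot)=l(\varphi(\cdot,x))$ this forces $\bar t=\tau_l(x)$. Since every convergent subsequence of the bounded sequence $\{t_k\}$ has the same limit $\tau_l(x)$, the whole sequence converges, which is the claimed continuity.

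The one genuinely technical ingredient, and the step I expect to be the main obstacle, is the continuous dependence of $\varphi(t,\cdot)$ on the initial datum in the $Y$-norm, uniformly for $t\in[0,\tau]$. This is not literally in the statements quoted above, but it follows from the same Banach fixed-point / Gronwall estimate used to construct the mild solution in Lemma~\ref{lem:solExistence}: subtracting the Duhamel formulae for two initial data and invoking the local Lipschitz bound (A2) on $f$ yields an estimate of the form $\norm{\varphi(t,x)-\varphi(t,y)}_Y \leq C e^{Kt}\norm{x-y}_Y$ on $[0,\tau]$. Everything else in the argument is the elementary analysis of the scalar function $g$ together with the intermediate value theorem and strict monotonicity.
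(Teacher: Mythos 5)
Your proposal is correct and follows essentially the same route as the paper's proof: existence of the crossing time via the intermediate value theorem, uniqueness via strict monotonicity of $g(t)=l(\varphi(t,x))$ obtained by differentiating coordinatewise and invoking assumption (2), and continuity of $\tau_l$ from continuity of the flow combined with uniqueness of the zero. The only cosmetic difference is that you establish continuity by a sequential-compactness argument, whereas the paper uses a direct $\varepsilon$--$\delta$ sign argument at the times $\tau_l(x)\pm\varepsilon$; both rest on the same uniform-in-time continuous dependence on the initial datum, which the paper invokes simply as ``continuity of the flow'' and which you correctly identify as the one ingredient requiring a Gronwall/Duhamel justification.
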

\begin{proof}
Let $x\in X.$
We observe that condition (1) and the continuity of the flow imply that there exists $\tau_l(x)$ such that $l(\varphi(\tau_l(x),x)) = 0.$ By condition (2) we deduce that
the function  $g(t) = l(\varphi(t,x))$ is increasing. Indeed we have
\begin{equation*}
    \frac{dg(t)}{dt} =
    \sum_{i=1}^n\alpha_i\frac{d \varphi_i(t,x)}{dt} =
    \sum_{i=1}^n\alpha_i F_i(\varphi(t,x)) > 0 .
\end{equation*}
So $\tau_l(x)$ has to be unique zero for $g$ on the interval $[0,\tau]$. We  prove the continuity of $\tau_l.$ Let $\varepsilon > 0.$ We define $g(\tau_l(x)-\varepsilon) = A_1 <0$ and $g(\tau_l(x)+\varepsilon) = A_2 >0.$ From the continuity of the flow and the function $l,$ we can pick $\delta>0$ such that for every $y\in B_Y(x,\delta) \cap X$ we have
\begin{equation*}
    \sup_{s\in[0,\tau]}
    |l(\varphi(s,x)) - l(\varphi(s,y))| < \frac{\min\{-A_1,A_2\}}{2}.
\end{equation*}
So for $y\in B_Y(x,\delta) \cap X$  we obtain $l(\varphi(\tau_l(x)-\varepsilon,y))<0$ and $l(\varphi(\tau_l(x)+\varepsilon,y))>0.$
Consequently there exists
$\tau_l(y) \in \tau_l(x)+(-\varepsilon,\varepsilon)$ such that
$l(\varphi(\tau_l(y),y))=0,$ so the funtion $\tau_l$ is continuous,  which concludes the proof.
\end{proof}
If assumption of Lemma \ref{lem:cross}  are satisfied then  for every $x\in X$ we have $l(\varphi(\tau_l(x),x)) = 0.$  Hence we can define the map Poincar\'{e} map $\mathcal{P}:X\to Y$ by the formula $\mathcal{P}(x) = \varphi(\tau_l(x),x).$
The map $\mathcal{P}$ is continuous in the norm of the Banach space $Y$.
If $  \varphi(t,X^0)\subset X$ then we can define the Poincar\'{e} map
$\mathcal{P}:X^0\to Y$ by the formula $\mathcal{P}(x)= \varphi(\tau_l(\varphi(t,x)),\varphi(t,x)).$

We will briefly describe the algorithm which estimates the image of the set $X^0 = X^0_P+ X^0_Q$ by the Poincare map.
We assume that the section $l(x)$ is given by the same formula as in Lemma \ref{lem:cross} and does not depend on values in space $H_Q,$ that is for every $x\in Y$ we have that $l(x) = l(Px).$
 The algorithm  is following.
 \begin{enumerate}
     \item  Set $t_\text{prev}:=0$ and $t_\text{curr} :=0.$
     \item Check if for every $x\in \varphi(t_\text{prev},X^0)$ there holds $l(x)<0$ and for every $x\in \varphi(t_\text{curr},X^0)$ there holds $l(x)>0.$
     \begin{enumerate}
         \item If no, then change
         $t_\text{prev}:=t_\text{curr}$
         and
         $t_\text{curr} := t_\text{curr} + \tau,$ where $\tau>0$ is a given time step. Then go back to step
         $(2).$
         \item If yes, then try to minimise the difference $t_\text{curr}- t_\text{prev}$ for which condition in (2) holds.
     \end{enumerate}

     \item Compute the enclosure $X^E$ such that
     $\varphi([t_\text{prev},t_\text{curr}],X^0) \subset X^E.$
     \item Check that for every $x\in X^E$ we have $\sum_{i=1}^n\alpha_i F_i(x)>0$ and if this condition is satisfied return $X^E.$

 \end{enumerate}

     Images $\varphi(t_\text{prev},X^0)$
     and $\varphi(t_\text{curr},X^0)$ can be estimated from the algorithm of integration described in Section \ref{sec:AlgorithmIntegration}. Computation of evaluations of $l(\varphi(t_\text{prev},X^0))$ and $l(\varphi(t_\text{prev},X^0)))$  depends only on the $P$ part of representations of sets $\varphi(t_\text{prev},X^0)$ and
     $\varphi(t_\text{curr},X^0)$.
     We refer to \cite[Algorithm 1,3]{PoincareMapArticle}
     for further details about evaluation of $l.$
     Step (b) is not necessary but it significantly improves estimation of the algorithm result. For minimization of the crossing time we can use the rigorous version of the bisection method or the the rigorous Newton method \cite[Algorithm 5, Lemma 8]{PoincareMapArticle}.

     Steps (2) and (4) of above algorithm assure that assumptions (1) and (2) of Lemma \ref{lem:cross} are satisfied. In step (4) we can additionally return estimation of the
     set $[B]( X^E_P -[y^0])$ were $[B]$ is some interval matrix and $[y^0]$ is some interval vector. This allows to compute the resulting estimates in some system of coordinates defined on the section and ignore the component which is normal to the section.
     For more details of computation of  $[B]( X^E_P -y^0)$ see \cite[Algorithm 6]{PoincareMapArticle}.

The following result allows us to deduce the existence of periodic orbit. Assumptions of this theorem can be checked with the use of previously described algorithm.
\begin{theorem}\label{th:PoincareFixedPoint}
Let $X^0$ be a nonempty, compact, convex subset of $Y$, such that for every $x^0\in X^0$ we have $l(x^0)=0$. Assume that for $t>0$ and $\tau>0$
the following holds:
\begin{enumerate}
    \item For every $x^0\in X^0$ we have
    $l(\varphi(t,x^0))<0$ and $l(\varphi(t+\tau,x^0))>0.$
    \item For every $x\in\varphi([t,t+\tau],X^0),$ we have
    $\sum_{i=1}^n\alpha_i F_i(x)>0.$
    \item For every $x\in \varphi([t,t+\tau],X^0)$ such that
    $l(x) = 0$ we have $x\in X^0.$
\end{enumerate}
Then there exist $x^* \in \varphi([t,t+\tau],X^0)$ and $T\in(t,t+\tau)$ such that
$l (x^*) = 0$ and $\varphi(T,x^*) = x^*$.
\end{theorem}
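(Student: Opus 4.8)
The plan is to construct the Poincaré map on the set $X^0$ and apply the Schauder fixed point theorem. First I would observe that assumptions (1) and (2) are precisely the hypotheses of Lemma \ref{lem:cross}, applied not to $X^0$ directly but to its forward image $\varphi(t,X^0)$. Indeed, setting $X := \varphi(t,X^0)$, condition (1) says $l(x)<0$ and $l(\varphi(\tau,x))>0$ for every $x\in X$, and condition (2) gives $\sum_{i=1}^n \alpha_i F_i > 0$ on $\varphi([0,\tau],X)=\varphi([t,t+\tau],X^0)$. Hence Lemma \ref{lem:cross} yields, for each $x^0\in X^0$, a unique return time $\tau_l(\varphi(t,x^0))\in(0,\tau)$ at which the trajectory crosses the section $\{l=0\}$, and the resulting map is continuous in the $Y$-norm. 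I would therefore define the Poincaré map $\mathcal{P}:X^0\to Y$ by
\begin{equation*}
\mathcal{P}(x^0) = \varphi\bigl(\tau_l(\varphi(t,x^0)),\,\varphi(t,x^0)\bigr),
\end{equation*}
which is continuous as a composition of the continuous semiflow $\varphi$, the continuous map $\tau_l$, and evaluation.

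Next I would verify that $\mathcal{P}$ maps $X^0$ into itself. By construction, for every $x^0\in X^0$ the point $\mathcal{P}(x^0)$ lies in $\varphi([t,t+\tau],X^0)$ and satisfies $l(\mathcal{P}(x^0))=0$. Assumption (3) states exactly that any point of $\varphi([t,t+\tau],X^0)$ lying on the section $\{l=0\}$ must belong to $X^0$; therefore $\mathcal{P}(x^0)\in X^0$, so $\mathcal{P}(X^0)\subset X^0$. Since $X^0$ is assumed nonempty, compact and convex, and $\mathcal{P}$ is continuous, the Schauder fixed point theorem guarantees a fixed point $x^*\in X^0$ with $\mathcal{P}(x^*)=x^*$.

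Finally I would translate this fixed point into a genuine periodic orbit. Writing $s^* := \tau_l(\varphi(t,x^*))\in(0,\tau)$ for the return time and $T := t + s^*\in(t,t+\tau)$, the fixed point relation reads $\varphi(T,x^*)=x^*$, while $l(x^*)=0$ holds because $x^*$ lies on the section. The semigroup property from Lemma \ref{re:flowCon} then propagates this to a full periodic trajectory of minimal period dividing $T$. The delicate points are mostly bookkeeping: one must check that the compositions of semiflow evaluations at a state-dependent time $\tau_l$ remain jointly continuous in $Y$ (using the continuity of $\tau_l$ from Lemma \ref{lem:cross} together with continuity of $\varphi$ in both arguments on the local domain $\Omega$), and that the constructed $x^*$ genuinely lands in $\varphi([t,t+\tau],X^0)$ rather than merely in its closure.

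The main obstacle I anticipate is confirming that $\mathcal{P}$ is well-defined and continuous as a self-map on the full convex compact set $X^0$, since $\mathcal{P}$ involves evaluating the semiflow at the \emph{variable} time $\tau_l$, and one must ensure the infinite-dimensional semiflow $\varphi$ is jointly continuous in $(t,x)$ on the relevant portion of $\Omega$ so that the composition inherits continuity; the compactness of $X^0$ in the infinite-dimensional space $Y$ is what makes Schauder applicable and sidesteps the Galerkin-limit argument used previously in \cite{ZPKuramotoII}.
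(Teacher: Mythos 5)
Your proposal is correct and follows essentially the same route as the paper's own proof: you define the Poincar\'{e} map $\mathcal{P}(x^0)=\varphi(\tau_l(\varphi(t,x^0)),\varphi(t,x^0))$ via Lemma \ref{lem:cross} applied to $X=\varphi(t,X^0)$, use assumption (3) to get $\mathcal{P}(X^0)\subset X^0$, and conclude with the Schauder fixed point theorem, exactly as the paper does. Your additional bookkeeping (the semigroup identity giving $T=t+s^*\in(t,t+\tau)$ and the continuity of the composition) only makes explicit what the paper leaves implicit.
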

\begin{proof}
From Lemma \ref{lem:cross} we observe that for every $x\in X^0$ we have $l(\varphi(\tau_l(\varphi(t,x)),\varphi(t,x) )) = 0.$  Hence we can define the map $\mathcal{P}:X^0\to Y$ by the formula $\mathcal{P}(x^0) = \varphi(\tau_l(\varphi(t,x)),\varphi(t,x) ).$ The map $\mathcal{P}$ is continuous in the norm of the Banach space $Y$.
From assumption (3) we see that $\mathcal{P}(X^0) \subset X^0.$
As $X^0$ is compact and convex, the Schauder fixed-point theorem ensures the existence of a fixed point $x^*$. For this point we have $x^*\in \mathcal{P}(X^0)\subset \varphi([t,t+\tau],X^0)$ which concludes the proof.
\end{proof}
 \begin{remark}
    If in Theorem \ref{th:PoincareFixedPoint} we additionally assume that for some $t_1<t_2<t+\tau $:
    \begin{enumerate}
        \item For every $x\in\varphi([0,t_1],X_0)$ and $x\in \varphi([t_2,t+\tau],X_0)$ we have that $\sum_{i=1}^n\alpha_i F_i(x)>0$  .
        \item For every $t\in [t_1,t_2]$ we have $\varphi(t,X^0) \cap X^0 = \emptyset$.
    \end{enumerate}
    Then $T$ has to be a fundamental period for $x^*$.
\end{remark}

\subsection{Numerical approximation of periodic orbit}
The first step in constructing the initial data and the section which will be used in validating the assumptions of Theorem \ref{th:PoincareFixedPoint}, is finding the approximation of periodic using the Galerkin projection of \eqref{eq:AbstractBrusselator}.
This means that we need to find an initial data $x^*=(u^*,v^*)$ and a time $T^*$ such that the solution to the system
\begin{equation}\label{eq:ProjAbstractBrusselator}
    \begin{cases}
     \frac{d}{dt}P(u(t),v(t)) = LP(u(t),v(t)) + Pf( P(u(t),v(t))),\\
      (u(0),v(0)) = (u^*,v^*).
    \end{cases}
\end{equation}
is close to a periodic solution and $T^*$ is close to its period.
\begin{figure}[h]

\begin{subfigure}{0.45\textwidth}
    \includegraphics[width=1\linewidth, height=6cm]{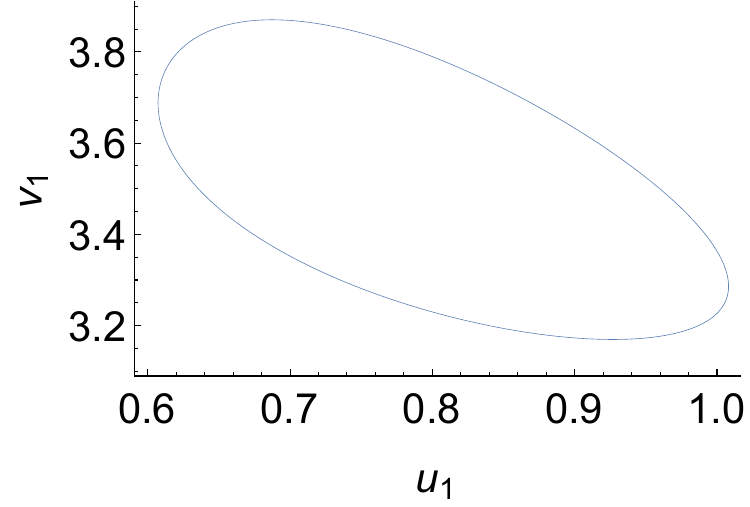}
    \caption{Projection of the numerical approximation of periodic orbit into two first modes $u_1,v_1,$
    for parameters $d_1,d_2,A,B$ from Theorem \ref{th:BrusselatorPeriodicOrbit}.
    }
\end{subfigure}
\quad
\begin{subfigure}{0.45\textwidth}
    \includegraphics[width=0.95\linewidth, height=6cm]{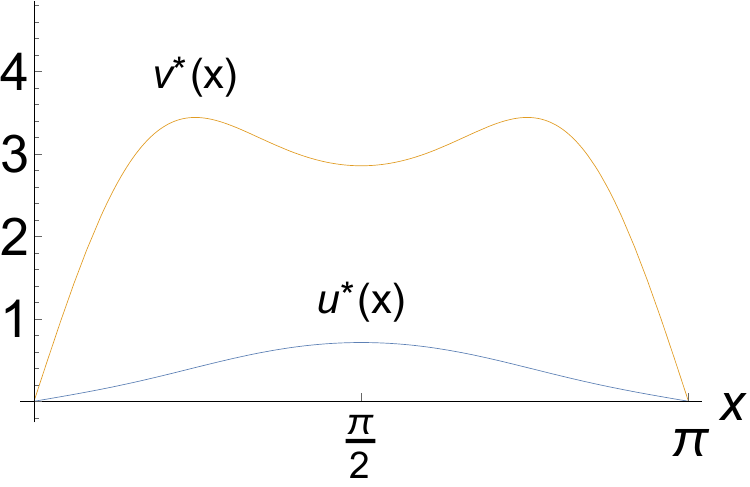}
\caption{Blue function corresponds to function $u^*(x)$ and orange to the function $v^*(x),$ around which we build the set of initial data.}
    \end{subfigure}

\label{fig:image2}
\caption{First two modes and initial data for the approximate periodic solution found by nonrigorous computations.}
\end{figure}

As, for the Brusselator system, we observe that periodic orbit is numerically attracting, it is enough to find approximation of attracting fixed point of some Poincare map.
We need to ensure that the section which defines this Poincare map intersects periodic orbit of \eqref{eq:ProjAbstractBrusselator}.
Additionally we are searching for $(u^*,v^*)$ for which only odd Fourier coefficients in the sine series are nonzero.
We chose to project the Brusselator system \eqref{eq:BrusselatorPDE} on the subspace of $L^2\times L^2$ spanned by the functions $\{(\sin(x),0),(\sin(3x),0),\ldots,(\sin(17x),0)\}$ and $\{(0,\sin(x)),(0,\sin(3x)),\ldots,(0,\sin(17x))\}.$

With this procedure we have found the following approximations of the fixed point of Poincar\'{e} map.
\begin{align*}
    u^*(x) &= 10^{-1}* 6.999\sin(x) - 10^{-2}* 8.170 \sin(3x)+
    -10^{-3}*5.377\sin(5x)+
    10^{-2}*1.325\sin(7x)+
    \\
    &+
    10^{-3}*1.050\sin(9x)
    -10^{-4}*2.585\sin(11 x)
     -10^{-6}*1.764\sin(13x) + 10^{-7}*5.029\sin(15x)
     \\
     &+
      10^{-8}*2.779\sin(17x),
\end{align*}
and
\begin{align*}
v^*(x) &= 3.869 \sin(x) +
1.136 \sin(3x) +
 10^{-1}* 1.017\sin(5x)
 - 10^{-3}*9.291 \sin(7x)
 \\&- 10^{-3}*1.297 \sin(9x)
 + 10^{-4}*1.960 \sin(11x) +
 10^{-5}*1.993 \sin(13x) -
 10^{-6}*4.109 \sin(15x)
 \\&- 10^{-7}*3.147 \sin(17x).
\end{align*}
The coefficients of $u^*$ and $v^*$ are written up to three decimal places.

\subsection{Defining a Poincar\'{e} map and constructing an initial set}\label{sec:ConstructingInitialSet}

We need to define a Poincar\'{e} map and an initial set that will be used to validate
Theorem \ref{th:BrusselatorPeriodicOrbit}.
The section is a mapping
$l:Y\to \mathbb{R}$ given by the formula
$l(x) = \sum_{i=1}^n \alpha_i(x_i-x^*_i),$ where the $x^*$ is the point from the numerical approximation of the periodic orbit.
The numbers $\alpha_i\in\mathbb{R}$ are chosen to assure that transversality condition (2) of Lemma \ref{lem:cross} holds. The method to choose the coefficients in the optimal way, such that the time of passing through the section is minimal is given in \cite[Theorem 18]{PoincareMapArticle}. We use this approach.
To describe the initial set $X^0 = X_P^0 + X_Q^0 $ we define separately two sets $X_P^0$ and $X_Q^0.$
The set $X_P^0$ is defined as follows
\begin{equation*}
    X_P^0= x^* + Ar^0,
\end{equation*}
where $x^*$ is a (noninterval) vector, $A$ is a (noninterval) square matrix and $r^0$ is an interval vector. Note, that since both $r^0$ is an interval vector, $x^*$ is a vector, and $A$ is a matrix, $X_P^0$ is a bounded and convex set.

The first column $c_1$ of the   matrix $A$
is equal to $(\alpha_1,\ldots,\alpha_n).$ The subsequent columns $c_2,\ldots,c_n$ of this matrix 
constitute the coordinate system on section. They are assumed to satisfy the following two conditions
\begin{enumerate}
    \item For every $i\in\{2,\ldots,n\}$ we have $l(c_i+x^*) = 0,$
    \item
    For the linear functional $\hat{l}(x) = l(x+x^*)$  we have that $\text{span}\{c_2,\ldots,c_n\} = P(\ker \hat{l}).$
\end{enumerate}
The first coefficient of $r^0$ can be set to interval $[0,0]$ as our initial set should be on the previously defined section. The next coefficients describe size of the set in the section and can be fixed for example to intervals $[-\delta,\delta]$ where $\delta>0$. In the algorithm of the evolution of the set, we will take as the initial data the set $\overline{X}_P^0$ which is a superset of $X_P^0$ and is defined by $\overline{X}_P^0 = [x^*] + [A]r^0$, where $[A]$ is an interval matrix containing $A$ and $[x^*]$ is an interval vector containing $x^*$. The first column of $[A]$ is the interval vector containing $c_1$. The remaining interval columns denoted by $\{C_i\}_{i=2}^n$ are constructed in such a way that they are guaranteed to contain vectors which constitute the coordinate system $\{c_i\}_{i=2}^n$ on the section, i.e. $c_i\in C_i$ for $i\in \{2,\ldots,n \}$. 
In construction of columns $C_2,\ldots, C_n$ of the matrix $[A]$ we can use numerical approximation of the eigenvectors of the Poincar\'{e} map given by the section $l$.
 Additionally every numerical approximation of these eigenvectors is rigorously projected on the first column in order to assure that condition $(1)$ is satisfied for a certain $c_i\in C_i$. We also compute an interval matrix $[B]$ which is a rigorous interval inverse of the interval matrix $[A].$ Existence of this matrix ensures that condition (2) is satisfied. Matrix $[B]$ is also used in further steps.
 The set $X^0_Q$ is defined by infinite interval vectors with the polynomial decay of coefficients as presented in \eqref{eq:representationInfiniteVectors}.

\subsection{Computer assisted proof}
With the previously defined section $l$ and the initial data $X^0$ we check that the assumptions of Theorem \ref{th:PoincareFixedPoint} are satisfied. We use the algorithm described in Section \ref{sec:crossing} to estimate the image $\mathcal{P}(X^0) \subset X^1_P+ X^1_Q.$ This algorithm guarantees that assumptions (1) and (2) of Theorem \ref{th:PoincareFixedPoint} are satisfied. The set $X^1_P$ is returned in the form
\begin{equation*}
    X^1_P = [x^*] + [A]q^0.
\end{equation*}
 To obtain the interval vector $q^0$ we use the algorithm of computing a Poincar\'{e} map with the matrix $[B],$ which is an interval inverse of the matrix $[A],$ and $[y^0]$ equal to $[x^*]$.  To validate the assumption (3) of Theorem \ref{th:PoincareFixedPoint} it is enough to check that
 \begin{itemize}
     \item[(C1)]\label{as:C1} we have $q^0_i\subset r^0_i$  for $i\in\{2,\ldots,n\},$
     \item[(C2)]\label{as:C2} we have $X^1_Q\subset X^0_Q.$
 \end{itemize}
 In the computer assisted proof of Theorem \ref{th:BrusselatorPeriodicOrbit} we set
  \begin{equation*}
     H_{P}:= \text{span}
     \bigcup_{\stackrel{1\leq k\leq 59}{k\,\textrm{mod}\, 2 = 1}} \{ (\sin(kx),0),(0,\sin(kx))\},
 \end{equation*}
We define $H_Q$ as the orthogonal complement of $H_P,$
in the space
\begin{equation*}
    H:=\left\{(u,v)\in L^2\times L^2:\, u(x) = \sum_{k=1}^\infty u_{2k-1}\sin((2k-1)x),\, v(x) = \sum_{k=1}^\infty v_{2k-1}\sin((2k-1)x)\right\}.
\end{equation*}
Observe that
$\text{dim}(H_{P}) = 60.$
 The set $X_P^0$ is defined using the procedure described in Section \ref{sec:ConstructingInitialSet} applied to the Brusselator system. This is a bounded, closed and convex set in a finite dimensional space $H_P$.
To see that $q^0_i\subset r^0_i$ implies that $X^0_P\subset X^1_P$ note that $q^0\subset r^0$ implies that 
$$
[B](X_P^1-[x^*]) = q^0 \subset r^0 = A^{-1}(X^0_P-x^*).
$$
But, since $A^{-1}\in [B]$ and $x^*\in [x^*]$, the last inclusion implies that
$$
A^{-1}(X_P^1-x^*)\subset  A^{-1}(X^0_P-x^*),
$$
which guarantees the required inclusion $X_P^1\subset X_P^0$. 
 The part $X_Q^0$ is given as follows
 \begin{equation*}
 X_Q^0:=\left\{(u,v)\in H^Q:\;
     u_k\in \frac{[-1,1]}{|k|^5},
     \;
     v_k\in \frac{[-1,1]}{|k|^5}\quad\text{for}\; k>59
     \right\}.
 \end{equation*}
  The set $X^0 = X_P^0+X_Q^0$ must be compact in $Y=C_0\times C_0$. As it is closed in $Y$, it is sufficient to show that it is bounded in $H^1_0\times H^1_0$. To this end assume that $(u,v)\in X^0$. Then
\begin{align*}
(u_x,v_x) = \left(\sum_{\underset{k\; \textrm{mod}\; 2 = 1}{k=1}}^\infty k u_k \sin(kx),\sum_{\underset{k\; \textrm{mod}\; 2 = 1}{k=1}}^\infty k u_k \sin(kx)\right),
\end{align*}
and 
\begin{align*}
& \|(u,v)\|_{H^1_0\times H^1_0}^2 = \|(u_x,v_x)\|_{L^2\times L^2}^2 = \frac{\pi}{2}\sum_{\underset{k\; \textrm{mod}\; 2 = 1}{k=1}}^\infty k^2(|u_k|^2+|v_k|^2) \\
& \ \ \ \ \ \leq  \frac{59^2\pi}{2}\sum_{\underset{k\; \textrm{mod}\; 2 = 1}{k=1}}^{59} (|u_k|^2+|v_k|^2) + \frac{\pi}{2}\sum_{\underset{k\; \textrm{mod}\; 2 = 1}{k=61}}^\infty \frac{2}{k^8}.
\end{align*}
Since the last quantity is bounded uniformly with respect to the choice of $(u,v)\in X^0$, we get the required compactness of this set in $Y$.
 After the computation of Poincar\'{e} map we get the set $X_P^1 + X_Q^1$ with the following $Q$ part
\begin{equation*}
    X_Q^1:=\left\{(u,v)\in H^Q:\;
     u_i\in 10^{-14}\frac{[-3.46474, 3.46474]}{|i|^{6.5}},
     \;
     v_i\in 10^{-13}\frac{[-3.9024, 3.9024]}{|i|^{6.5}}\quad\text{for}\; i>59
     \right\}.
\end{equation*}
 The conditions \hyperref[as:C1]{(C1)},\;\hyperref[as:C2]{(C2)} are satisfied, so the main Theorem \ref{th:BrusselatorPeriodicOrbit} is validated. The fact that \hyperref[as:C1]{(C1)} holds is demonstrated in Tab. \ref{tab:1} where for brevity only the first $10$ coordinates are depicted. Due to the extra information on the periodic orbit obtained in the computer assisted proof, we can provide the following extended version of the main theorem.
 
\begin{theorem}
For parameters $d_1 = 0.2,\; d_2 = 0.02,\; A = 1,\;  B= 2$ the Brusselator system has a periodic solution $(\Bar{u}(t,x),\Bar{v}(t,x)),$ with the period $T\in [7.69666, 7.69667].$ The functions $\Bar{u}(t,x),\Bar{v}(t,x)$ are symmetric with respect to the point $x=\frac{\pi}{2}.$ Moreover the following estimates are true
\begin{align*}
    \sup_{t\in[0,T]}\norm{\Bar{u}(t)}_{L^2}&\leq 1.27261,\;
    \sup_{t\in[0,T]}\norm{\Bar{v}(t)}_{L^2}\leq 5.05587,
    \\
    \sup_{t\in[0,T]}\norm{\Bar{u}_x(t)}_{L^2}&\leq 1.35194,\;
    \sup_{t\in[0,T]}\norm{\Bar{v}_x(t)}_{L^2}\leq 6.75405,
    \\
    \sup_{t\in[0,T]}\norm{\Bar{u}(t)-{u}^*(t)}_{L^2}&\leq 0.00049664,\;
    \sup_{t\in[0,T]}\norm{\Bar{v}(t)-{v}^*(t)}_{L^2}\leq 0.000955005,
     \\
    \sup_{t\in[0,T]}\norm{\Bar{u}_x(t)-{u}_x^*(t)}_{L^2}&\leq 0.000546005,\;
    \sup_{t\in[0,T]}\norm{\Bar{v}_x(t)-{v}_x^*(t)}_{L^2}\leq 0.00141263,
\end{align*}
where $(u^*(t,x),v^*(t,x))$ is the solution to the Brusselator system with the initial data $u^*(0,x)=u^*(x),\; v^*(0,x)=v^*(x),$ and the same parameters as above.

\end{theorem}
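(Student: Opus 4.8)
The existence of the periodic solution, the period interval and the symmetry are essentially immediate from the machinery already in place, so the plan is first to dispatch these three assertions and then to read off the quantitative bounds from the enclosures generated along the way. For existence, the rigorous Poincar\'e-map computation of Section~\ref{sec:crossing} guarantees assumptions (1) and (2) of Theorem~\ref{th:PoincareFixedPoint}, while the verified inclusions (C1) and (C2) supply assumption (3); Theorem~\ref{th:PoincareFixedPoint} then yields a point $\bar x=(\bar u,\bar v)\in X^0$ and a time $T$ with $\varphi(T,\bar x)=\bar x$, and the periodic solution of the statement is $t\mapsto\varphi(t,\bar x)$. For the period, since $T=t+\tau_l(\varphi(t,\bar x))$ with $\tau_l$ the crossing time of Lemma~\ref{lem:cross}, the interval $[7.69666,7.69667]$ is the rigorous enclosure of this crossing time evaluated on $\bar x$, sharpened by the bisection/Newton step of Section~\ref{sec:crossing}. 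For the symmetry, the entire computation takes place in the space $W$ of functions with only odd sine modes (the phase space $H$ retains only the coefficients $u_{2k-1},v_{2k-1}$ and $X^0\subset W$), so Proposition~\ref{prop:OddSubspace} applies to $\bar x$ and gives $\bar u(t,\tfrac{\pi}{2}+x)=\bar u(t,\tfrac{\pi}{2}-x)$ together with the analogous identity for $\bar v$.

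For the four norm bounds I would enclose the whole orbit, not merely its return map. As $\bar x\in\mathcal P(X^0)\subset X^0$, the orbit $\{\varphi(s,\bar x):s\in[0,T]\}$ lies in $\varphi([0,T],X^0)$, which the integration algorithm of Section~\ref{sec:AlgorithmIntegration} covers by finitely many step enclosures $X^{E_j}=X^{E_j}_P+X^{E_j}_Q$. On each enclosure the norms are evaluated by Parseval, $\|u\|_{L^2}^2=\tfrac{\pi}{2}\sum_k|u_k|^2$ and $\|u_x\|_{L^2}^2=\tfrac{\pi}{2}\sum_k k^2|u_k|^2$, exactly as in the $H^1_0$ computation preceding the theorem: the finitely many modes of $X^{E_j}_P$ contribute through their interval widths and the tail $X^{E_j}_Q$ is summed using the polynomial decay \eqref{eq:representationInfiniteVectors}. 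Maximizing over the finite index set $j$ produces the stated suprema of $\|\bar u\|_{L^2}$, $\|\bar v\|_{L^2}$, $\|\bar u_x\|_{L^2}$ and $\|\bar v_x\|_{L^2}$ over $[0,T]$.

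For the distance-to-reference inequalities I would propagate the genuine PDE solution $(u^*(t),v^*(t))$ issued from the approximate data $(u^*,v^*)\in X^0$ simultaneously with the set, keeping the evolving enclosure in coordinates centred at this reference trajectory, i.e.\ in the doubleton form $[x^*(t)]+[A(t)]\,r(t)$ already used for $X^0_P$. Because both $\bar x$ and $x^*$ start in $X^0$ and the centred representation bounds the deviation of every enclosed trajectory from the reference, the $L^2$- and $H^1_0$-norms of the deviation intervals — again computed via Parseval and the tail decay — bound $\|\bar u(t)-u^*(t)\|_{L^2}$, $\|\bar v(t)-v^*(t)\|_{L^2}$ and their derivative analogues; taking the supremum over the time-steps gives the last four inequalities.

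The main obstacle is the propagation of these full-period enclosures. To obtain the $\sup_{t\in[0,T]}$ bounds, and especially the difference bounds (which are several orders of magnitude smaller than the solution norms), the orbit enclosure must be carried over the entire period without the wrapping effect inflating the diameters, and the $H^1_0$ tail, weighted by the extra factor $k^2$, must remain summable. The latter is exactly why the decay rate $s=5$ is imposed on $X^0_Q$ and an even faster decay on $V^{E_2}$ in the set-evolution step, so that $\sum_k k^2|u_k|^2$ over the tail converges and contributes negligibly to the reported constants.
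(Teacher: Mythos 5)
Your proposal is correct and follows essentially the same route as the paper: existence via Theorem~\ref{th:PoincareFixedPoint} with conditions (C1)--(C2), the period from the rigorous crossing-time enclosure of Section~\ref{sec:crossing}, symmetry from working in the odd-mode subspace $W$ via Proposition~\ref{prop:OddSubspace}, and the $L^2$/$H^1_0$ and difference bounds read off from the Parseval evaluation of the step enclosures (with the $s=5$ tail decay controlling the $k^2$-weighted sums). This is exactly how the paper extracts the extended statement from its computer-assisted proof.
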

\begin{center}

\begin{table}[h]
        \centering
        
\begin{tabular}{|p{0.07\textwidth}|p{0.24\textwidth}|p{0.4\textwidth}|}
\hline 
  & $\displaystyle r_{0}$ & $\displaystyle q_{0}$ \\
\hline 
 1. & $\displaystyle [ 0,0]$ & $\displaystyle 10^{-6}[-1.10436, 1.21675]$ \\
\hline 
 2. & $\displaystyle 10^{-5}[ -1\ ,\ 1]$ & $\displaystyle 10^{-6}[-6.87377, 6.87371]$ \\
\hline 
 3. & $\displaystyle 10^{-5}[ -1\ ,\ 1]$ & $\displaystyle 10^{-6}[-1.1776, 1.17766]$ \\
\hline 
 4. & $\displaystyle 10^{-5}[ -1\ ,\ 1]$ & $\displaystyle 10^{-11}[-2.8735, -2.36735]$ \\
\hline 
 5. & $\displaystyle 10^{-5}[ -1\ ,\ 1]$ & $\displaystyle 10^{-9}[-5.40597, 5.43755]$ \\
\hline 
 6. & $\displaystyle 10^{-5}[ -1\ ,\ 1]$ & $\displaystyle 10^{-9}[-1.60821, 1.59499]$ \\
\hline 
 7. & $\displaystyle 10^{-5}[ -1\ ,\ 1]$ & $\displaystyle 10^{-10}[-1.56502, 1.59018]$ \\
\hline 
 8. & $\displaystyle 10^{-5}[ -1\ ,\ 1]$ & $\displaystyle 10^{-11}[-4.72352, 6.65989]$ \\
\hline 
 9. & $\displaystyle 10^{-5}[ -1\ ,\ 1]$ & $\displaystyle 10^{-11}[-5.29417, -1.37926]$ \\
\hline 
 10. & $\displaystyle 10^{-5}[ -1\ ,\ 1]$ & $\displaystyle 10^{-11}[0.131355, 2.59156]$ \\
 \hline
\end{tabular}
       \caption{First 10  coordinates of interval vectors $r_0$ and computed $q_0$ which correspond to the sets $X^0_P$ and $X^1_P$ in the computer assisted proof of Theorem \ref{th:BrusselatorPeriodicOrbit} for parameters $d_1=0.2,d_2=0.02,A=1,B=2.$} \label{tab:1}
        \end{table}
\end{center}

\section{Numerical and rigorous results for other parameter values}\label{sec:numerical}
In this section we discuss some numerical observations and rigorous results concerning the Brusselator system with various parameter values. We  conduct the computer assisted proof of existence of periodic orbit for the parameter values from the set $\mathcal{A}= \mathcal{A}_1\cup\mathcal{A}_2\cup\mathcal{A}_3,$ of parameters $(d_1,d_2,A,B)$, where
\begin{align*}
     &\mathcal{A}_1 = \left\{\left(0.2,0.02,1,2+\frac{i}{10}\right): i\in \{0,\ldots,11\} \right\},
     \\
     &\mathcal{A}_2 = \left\{\left(1,\frac{1}{64},1,2.71\right),\left(1,\frac{1}{64},1,2.83\right),\left(1,\frac{1}{64},1,2.84\right)\right\},
     \quad
\mathcal{A}_3 =  { \{ (0.02,0.02,1,2) \} }.
\end{align*}

We have rigorously proved the existence of the  periodic orbits for all parameters from the set $\mathcal{A}$. Values in the set $\mathcal{A}_1$ correspond to the slow-fast behavior of the system, ones in the set $\mathcal{A}_2$ correspond to the period-doubling  bifurcation and cross-validation with the results of Arioli \cite{ArioliBrusselator}, and  the paramters in $\mathcal{A}_3$ are related with the numerical experiments where we find attracting torus. 

In the grid set $\mathcal{A}_1$ we fix the parameters $d_1, d_2, A$ and we increase the parameter $B$ by $0.1$ starting from its value $B=2$, which corresponds to the periodic orbit found in our main Theorem \ref{th:BrusselatorPeriodicOrbit}, and ending at the value $B=3.1$. For the corresponding planar ODE \eqref{eq:BrusselatorODE}, it is known \cite{BrusselatorSlowFast} that the slow-fast dynamics of the system increases  when the parameter $B$ grows. We observe the same intensification of slow-fast behavior upon the increase of parameter $B$ also for the Brusselator with diffusion. This is depicted in Fig. \ref{fig:OrbitsPlots1} and \ref{fig:OrbitsPlots2}.
The slow-fast behavior is especially visible in the higher Fourier modes, cf. Fig. \ref{fig:fourModesU}. We also stress that as the value of $B$ increases, we  require more modes to accurately represent the solution. Therefore, to successfully carry out the computer-assisted proof, we must increase the dimension of the inclusion as the value of $B$ increases, and, consequently the computation time lengthens. This is shown in Tab.\ref{tab:incltim}.

\begin{center}
	
	\begin{table}[!h]
		\centering
		
		\begin{tabular}{|p{0.22\textwidth}|p{0.22\textwidth}|p{0.22\textwidth}|}
			\hline
			Value of parameter $\displaystyle B$ & Number of variables in the inclusion \eqref{inclusion} & Computation time in seconds \\
			\hline
			$\displaystyle 2.1$ & $\displaystyle 14$ & $\displaystyle 298$ $\displaystyle s$ \\
			\hline
			$\displaystyle 2.3$ & $\displaystyle 22$ & $\displaystyle 332$ $\displaystyle s$ \\
			\hline
			$\displaystyle 2.5$ & $\displaystyle 22$ & $\displaystyle 341$ $\displaystyle s$ \\
			\hline
			$\displaystyle 2.7$ & $\displaystyle 30$ & $\displaystyle 477\ s$ \\
			\hline
			$\displaystyle 2.9$ & $\displaystyle 32$ & $\displaystyle 534\ s$ \\
			\hline
			$\displaystyle 3.1$ & $\displaystyle 40$ & $\displaystyle 771\ s$ \\
			\hline
		\end{tabular}
		\caption{Selected values of parameter $B$ used in the computer assisted proof of Theorem \ref{thm:morepars}, the dimensions of corresponding inclusion \eqref{inclusion} and the computation times needed to rigorously validate the periodic orbit existence. The computations were single-threaded and realized on the processor
			Intel(R) Core(TM) i5-4200M CPU @ 2.50GHz.}\label{tab:incltim}
\end{table}
\end{center}

The result on the existence of the priodic orbits for parameters in $\mathcal{A}_1$ is contained in the next theorem.
\begin{theorem}\label{thm:morepars}
	The Brusselator system has a periodic orbit for  $(d_1,d_2,A,B)\in\mathcal{A}_1 $.
\end{theorem}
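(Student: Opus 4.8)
The plan is to repeat, for each of the twelve parameter tuples in $\mathcal{A}_1$, exactly the computer-assisted procedure already used to establish Theorem \ref{th:BrusselatorPeriodicOrbit}. Since $\mathcal{A}_1$ is a finite set, it suffices to carry out the validation independently for each value $B = 2 + i/10$, $i\in\{0,\ldots,11\}$, with the fixed diffusion and reaction parameters $d_1=0.2$, $d_2=0.02$, $A=1$. For every such tuple I would invoke Theorem \ref{th:PoincareFixedPoint}: the existence of a periodic orbit follows once one exhibits a nonempty, compact, convex set $X^0\subset Y$ lying in a section $\{l=0\}$, together with times $t,\tau>0$ for which the transversality conditions (1)--(2) and the self-mapping condition (3) of that theorem hold.

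First I would compute, for each $B$, a numerical approximation $(u^*,v^*)$ of the fixed point of the Poincar\'{e} map and an approximate period $T^*$ by integrating the Galerkin projection \eqref{eq:ProjAbstractBrusselator}, restricting as before to the forward-invariant odd subspace $W$ from Proposition \ref{prop:OddSubspace}. Next I would build the affine section $l(x)=\sum_i\alpha_i(x_i-x^*_i)$ and the initial set $X^0 = X_P^0 + X_Q^0$ following Section \ref{sec:ConstructingInitialSet}: the finite-dimensional part $X_P^0 = x^* + A r^0$ is a parallelepiped anchored at the approximate fixed point whose columns span the section, and the tail $X_Q^0$ is the polynomially decaying interval vector of \eqref{eq:representationInfiniteVectors}. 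Then I would run the rigorous integration scheme of Section \ref{sec:AlgorithmIntegration} together with the crossing algorithm of Section \ref{sec:crossing} to enclose $\mathcal{P}(X^0)\subset X_P^1 + X_Q^1$, and finally verify the two inclusions (C1) $q^0_i\subset r^0_i$ for $i\in\{2,\ldots,n\}$ and (C2) $X_Q^1\subset X_Q^0$, which together imply assumption (3) of Theorem \ref{th:PoincareFixedPoint}. Compactness of $X^0$ in $Y$ is obtained exactly as for the extended version of the main theorem, by bounding its $H^1_0\times H^1_0$ norm using the decay of the tail.

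The main obstacle is quantitative rather than conceptual: as $B$ grows the system develops increasingly pronounced slow--fast dynamics, with sharp transitions that are especially visible in the higher Fourier modes. Consequently the approximate orbit is no longer well resolved by a fixed small number of explicitly tracked modes, and the overestimation produced by interval arithmetic in the enclosure and in solving the inclusion \eqref{inclusion} degrades. To keep conditions (C1)--(C2) verifiable I would have to enlarge the explicitly represented finite-dimensional block $H_P$, equivalently raise the number of variables carried in the differential inclusion, as $B$ increases, at the cost of substantially longer computation, as recorded in Table \ref{tab:incltim}. Choosing this dimension together with the section coefficients $\alpha_i$ and the tail parameters so that the self-mapping inclusion closes for the larger values of $B$ is the delicate part of the argument; once a successful configuration is found for each of the twelve parameters, Theorem \ref{th:PoincareFixedPoint} delivers the periodic orbit in each case.
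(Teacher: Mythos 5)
Your proposal is correct and follows essentially the same route as the paper: the authors prove Theorem \ref{thm:morepars} by repeating, for each tuple in $\mathcal{A}_1$, the computer-assisted validation of Theorem \ref{th:BrusselatorPeriodicOrbit} --- constructing a section and initial set around a numerically approximated orbit, rigorously enclosing the Poincar\'{e} map image, and checking conditions (C1)--(C2) so that Theorem \ref{th:PoincareFixedPoint} applies. Your observation that the dimension of the differential inclusion \eqref{inclusion} must grow with $B$ because of the slow--fast behavior is exactly what the paper reports in Table \ref{tab:incltim}.
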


\color{black}

\begin{figure}[H]
    \centering
    \includegraphics{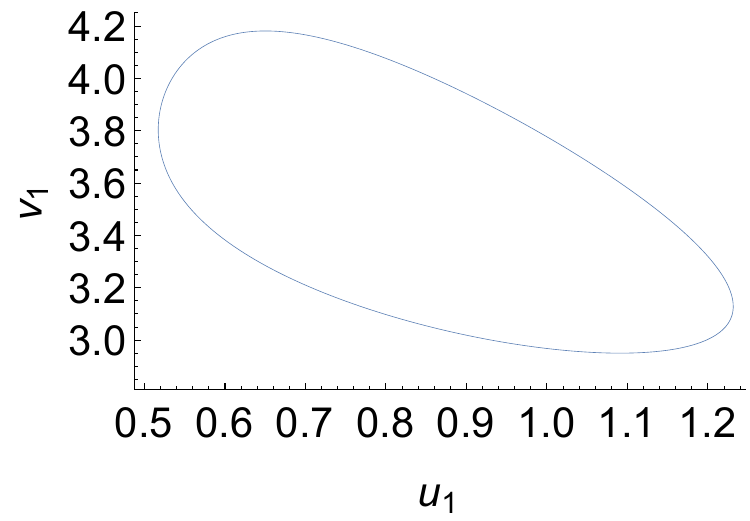}
    \quad
    \includegraphics{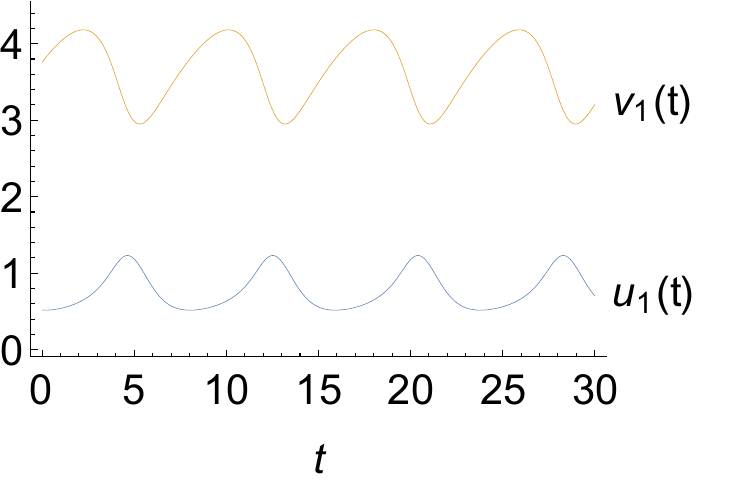}
    \includegraphics{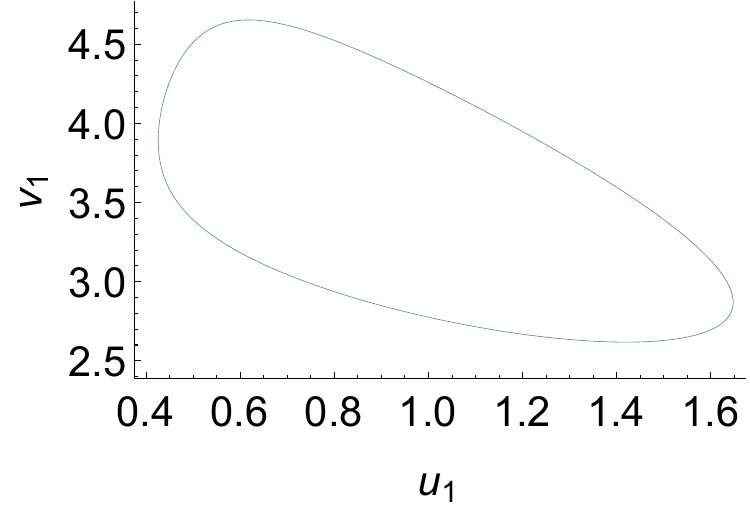}
    \quad
    \includegraphics{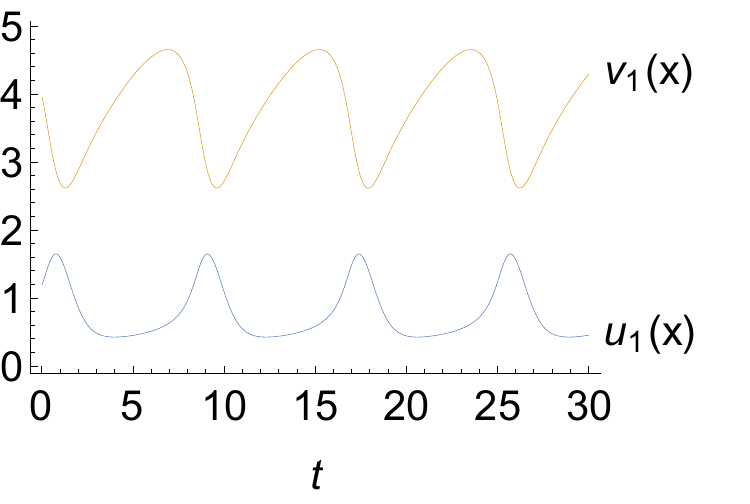}

    \caption{Plots of projection of numerical approximation of the periodic orbits into first modes of $u$ and $v$ (left) and the evolution in time of the first Fourier modes (right) for
    $d_1 = 0.2$, $d_2=0.02$, $B=2.1$ (top) and $B=2.3$ (bottom), $A=1.$ }
    \label{fig:OrbitsPlots1}
\end{figure}

\begin{figure}[H]
    \centering
        \includegraphics{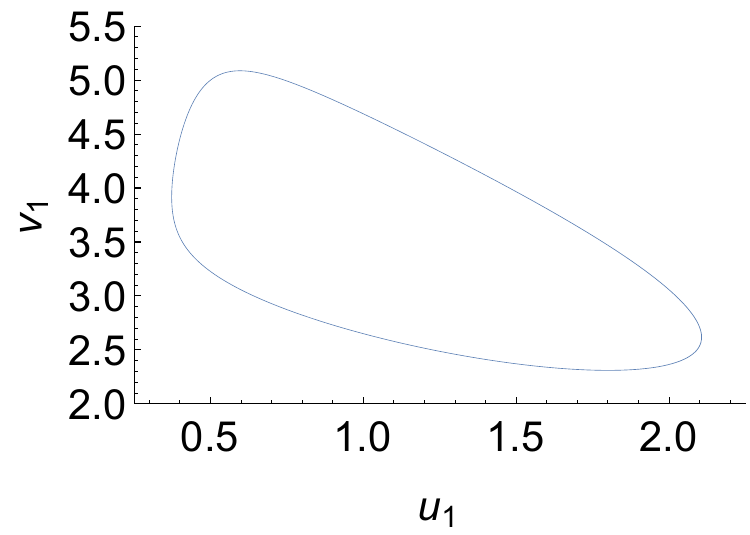}
    \quad
    \includegraphics{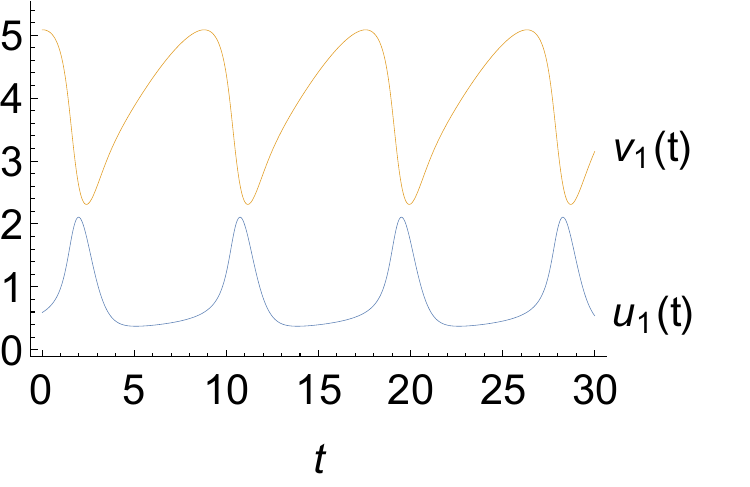}
    \noindent
    \includegraphics{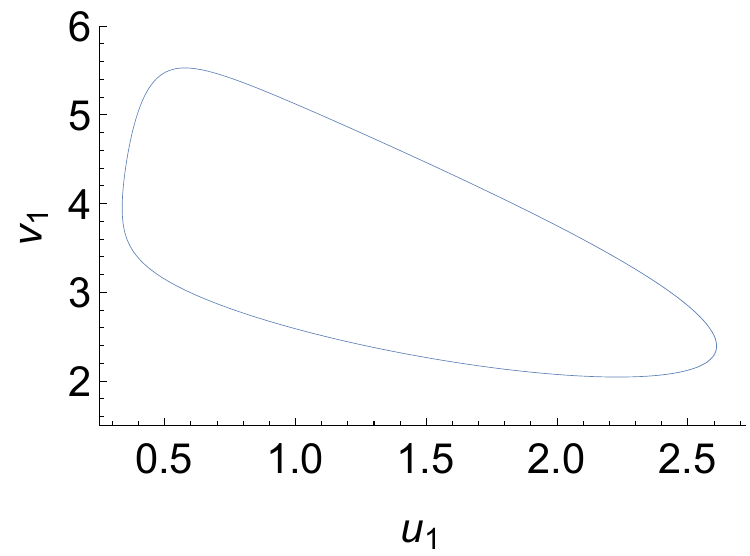}
    \quad
    \includegraphics{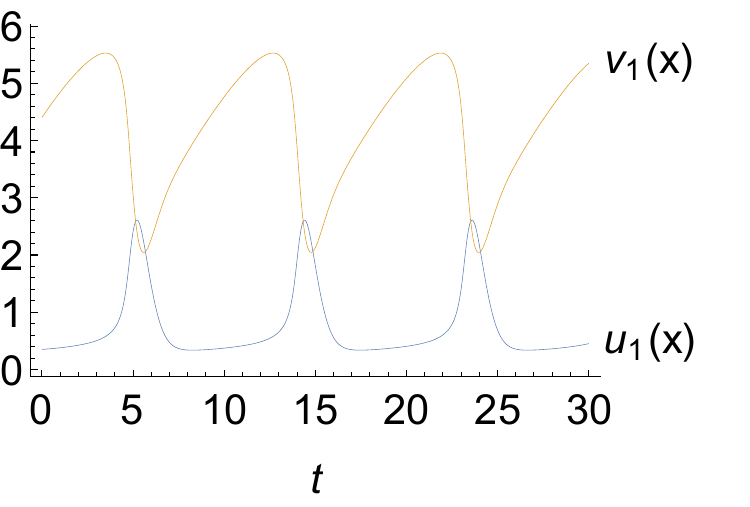}
    \includegraphics{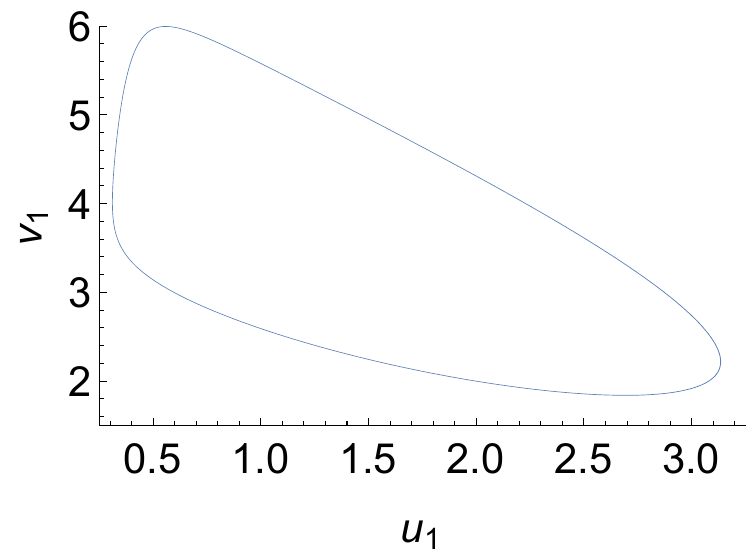}
    \quad
    \includegraphics{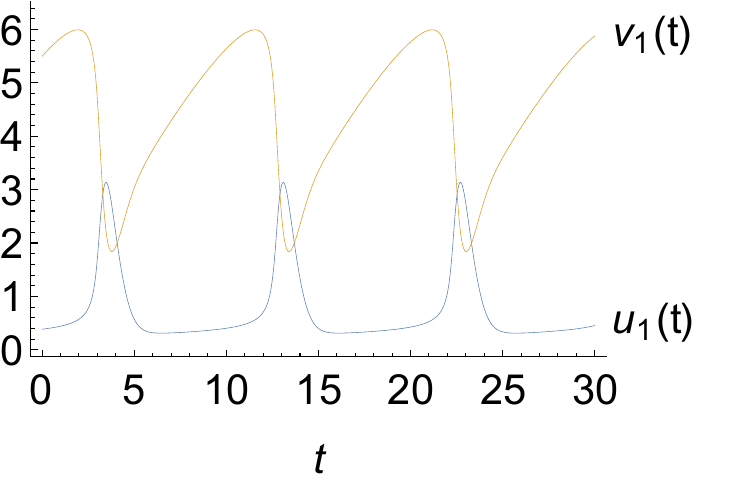}
    \quad
    \includegraphics{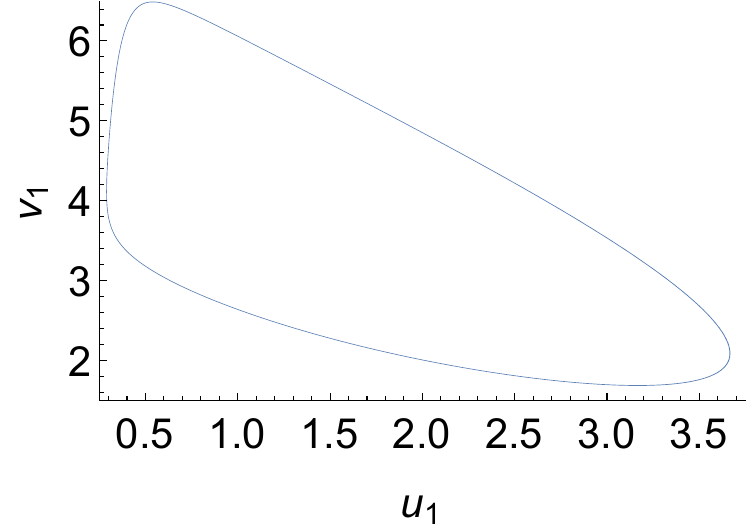}
    \quad
    \includegraphics{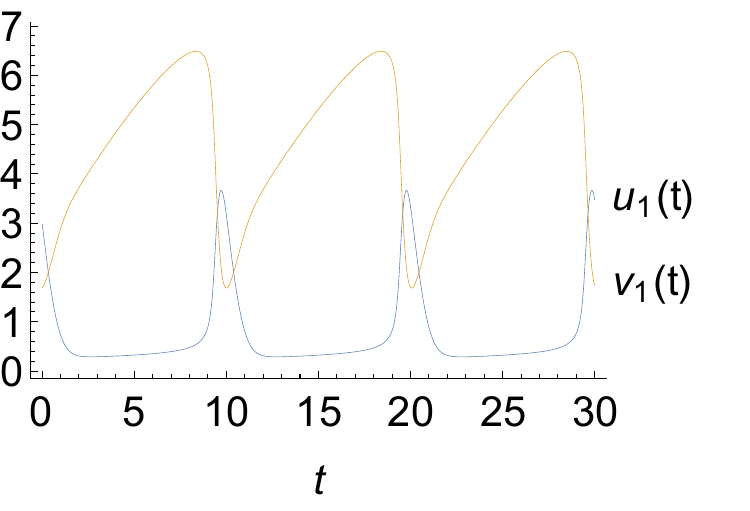}
    \caption{Plots of projection of numerical approximation of the periodic orbits into first modes of $u$ and $v$ (left) and the evolution in time of the first Fourier modes (right) for
    $d_1 = 0.2,\;d_2=0.02,\;B=2.5,\;2.7,\;2.9,\;3.1$ (from top to bottom), $A=1.$}
    \label{fig:OrbitsPlots2}
\end{figure}
\begin{figure}[H]
    \centering
    \includegraphics{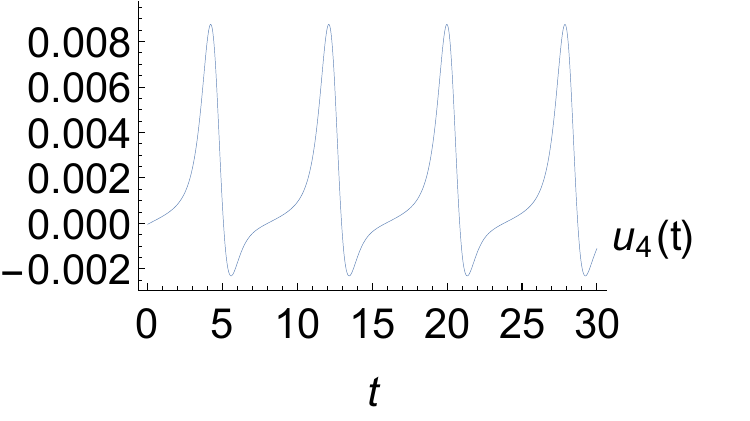}
    \quad
    \includegraphics{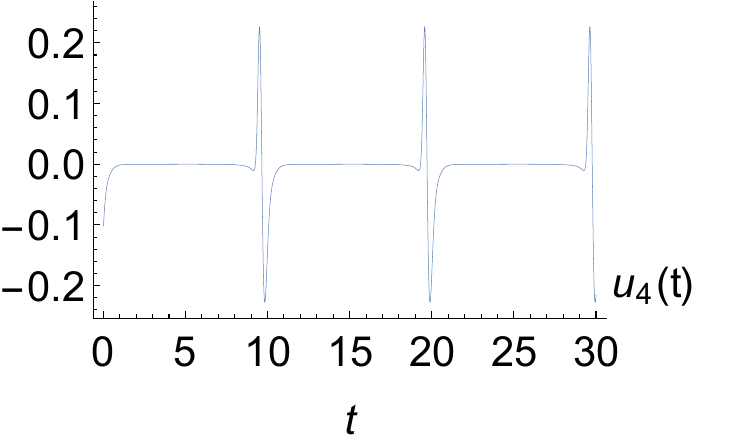}
    \caption{The evolution of fourth mode of $u$ for $B = 2.1$ (left picture) and $B=3.1$ (right picture).}
    \label{fig:fourModesU}
\end{figure}

In \cite[Theorem 2]{ArioliBrusselator} Arioli used computer assisted method and proved the existence of periodic orbit  for parameters $d_1=1,\,d_2 = \frac{1}{64},\,B\in[2.6993,2.7419],\,A=1.$ We have checked his result for $B=2.71$, and we found that both results correspond to each other in terms of the found period of the orbit. Our resuls, which succesfully cross-validates both approaches, is contained in the following theorem.

\begin{theorem}\label{th:Arioli}
For parameters $d_1 = 1,\; d_2 = \frac{1}{64},\; A = 1,\;  B= 2.71$ the Brusselator system has a periodic solution $(\Bar{u}(t,x),\Bar{v}(t,x)),$ with the period $T\in [10.4549, 10.455].$
The functions $\Bar{u}(t,x),\Bar{v}(t,x)$ are symmetric with respect to the point $x=\frac{\pi}{2}.$ Moreover the following estimates are true
\begin{align*}
    \sup_{t\in[0,T]}\norm{\Bar{u}(t)}_{L^2}&\leq 0.600569,\;
    \sup_{t\in[0,T]}\norm{\Bar{v}(t)}_{L^2}\leq 5.05587,
    \\
    \sup_{t\in[0,T]}\norm{\Bar{u}_x(t)}_{L^2}&\leq 10.0529,\;
    \sup_{t\in[0,T]}\norm{\Bar{v}_x(t)}_{L^2}\leq 11.5792,
    \\
    \sup_{t\in[0,T]}\norm{\Bar{u}(t)-{u}^*(t)}_{L^2}&\leq 10^{-5}*8.69037,\;
    \sup_{t\in[0,T]}\norm{\Bar{v}(t)-{v}^*(t)}_{L^2}\leq 0.000295375,
     \\
    \sup_{t\in[0,T]}\norm{\Bar{u}_x(t)-{u}_x^*(t)}_{L^2}&\leq 10^{-5}*8.99234,\;
    \sup_{t\in[0,T]}\norm{\Bar{v}_x(t)-{v}_x^*(t)}_{L^2}\leq 0.000450742,
\end{align*}
where $(u^*(t,x),v^*(t,x))$ is the solution to the Brusselator system with the initial data

\begin{align*}
    u^*(0,x) &= 0.43 \sin (x)-0.0231361 \sin (3 x)-0.00129933 \sin (5 x)+10^{-5} *7.48643 \sin (7 x)
    \\
    &+10^{-6}*7.466799 \sin (9 x)-10^{-7}*2.998759 \sin (11 x)-10^{-8}* 3.89921 \sin (13 x)
    \\
    &+10^{-9}*1.15918 \sin (15x)+10^{-10}*1.98398 \sin (17 x),
\end{align*}
\begin{align*}
    v^*(0,x) &= 7.85996 \sin (x)+1.59666 \sin (3 x)+0.091348 \sin (5 x)-0.0041776 \sin (7 x)
    \\
    &-10^{-4}*4.73146 \sin (9 x)+10^{-5}*1.66984 \sin (11 x)+10^{-6}*2.42261\sin (13 x)
    \\
    &-10^{-8}* 6.5405 \sin (15 x)-10^{-8}*1.23104 \sin (17 x),
\end{align*}
 and the same parameters as above.

\end{theorem}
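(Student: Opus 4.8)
The plan is to establish Theorem~\ref{th:Arioli} by the same computer-assisted scheme that proves Theorem~\ref{th:BrusselatorPeriodicOrbit}, now executed for the parameters $d_1=1$, $d_2=\frac{1}{64}$, $A=1$, $B=2.71$, reading off the quantitative estimates from the rigorous enclosure produced along the way. First I would work inside the forward-invariant subspace $W=\{(u,v)\in Y:u_{2i}=v_{2i}=0\}$ supplied by Proposition~\ref{prop:OddSubspace}. Since the numerical data $u^*(0,\cdot),v^*(0,\cdot)$ in the statement contain only odd sine modes, the entire construction stays in $W$; this is exactly what yields the asserted symmetry of $\bar{u},\bar{v}$ about $x=\frac{\pi}{2}$. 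Next, following Section~\ref{sec:ConstructingInitialSet}, I would fix a splitting $H=H_P\oplus H_Q$ with $H_P$ spanned by odd modes up to a cutoff large enough to resolve the orbit, a transversal affine section $l(x)=\sum_i\alpha_i(x_i-x_i^*)$ through the approximate fixed point, and an initial set $X^0=X_P^0+X_Q^0$, where $X_P^0=x^*+Ar^0$ lives on the section and $X_Q^0$ is a polynomially decaying tail as in \eqref{eq:representationInfiniteVectors}. Closedness together with boundedness in $H^1_0\times H^1_0$, obtained via Parseval applied to the explicit modes and the $O(k^{-s})$ tail, gives compactness of $X^0$ in $Y$, as in the proof of the main theorem.

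The core step is to run the rigorous integration algorithm of Section~\ref{sec:AlgorithmIntegration} together with the Poincar\'{e}-map algorithm of Section~\ref{sec:crossing} to enclose $\mathcal{P}(X^0)\subset X_P^1+X_Q^1$, and to verify conditions (1)--(3) of Theorem~\ref{th:PoincareFixedPoint}. Transversality, i.e.\ conditions (1) and (2), is guaranteed by steps (2) and (4) of the crossing algorithm via Lemma~\ref{lem:cross}; the self-mapping condition (3) reduces, exactly as in the main proof, to checking (C1) $q_i^0\subset r_i^0$ for the section coordinates together with (C2) $X_Q^1\subset X_Q^0$ for the tail. The Schauder fixed-point theorem, already packaged in Theorem~\ref{th:PoincareFixedPoint}, then delivers the fixed point and hence the periodic orbit. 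The period bound $T\in[10.4549,10.455]$ is the rigorous enclosure of the crossing time $\tau_l$, and the six pairs of norm estimates follow from the time-enclosure $X([0,T])$ by Parseval, using $\|u\|_{L^2}^2=\frac{\pi}{2}\sum_k|u_k|^2$ and $\|u_x\|_{L^2}^2=\frac{\pi}{2}\sum_k k^2|u_k|^2$, with the difference norms $\|\bar{u}-u^*\|$ read off from the diameter of the enclosing tube around the numerical trajectory.

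The main obstacle I expect is the small diffusion coefficient $d_2=\frac{1}{64}$ combined with the larger value $B=2.71$. The $v$-modes carry eigenvalues $\lambda_{2k}=-d_2k^2=-k^2/64$, which are only weakly dissipative, so the linear part dominates the cubic nonlinearity only at comparatively high modes; this is precisely the slow-fast regime discussed in Section~\ref{sec:numerical}. In practice this forces a larger explicitly integrated block $H_P$ and a finer control of the tail in the sense of Lemma~\ref{lem:whyItWorks}, lengthening both the time-stepping, needed to keep the enclosure from blowing up during the fast excursions, and the overall computation. The favorable mechanism is nevertheless unchanged: dissipativity of $L$ guarantees that an enclosure exists for a sufficiently short step, cf.\ the remark following Lemma~\ref{lem:whyItWorks}, so the scheme stays applicable. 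The difficulty is purely quantitative, namely keeping the overestimation small enough that (C1) and (C2) actually close.
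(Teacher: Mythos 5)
Your proposal matches the paper's own route: Theorem \ref{th:Arioli} is proved there by exactly the same machinery as Theorem \ref{th:BrusselatorPeriodicOrbit} --- working in the odd-mode invariant subspace $W$, constructing the section and initial set as in Section \ref{sec:ConstructingInitialSet}, rigorously enclosing the Poincar\'{e} map image, checking (C1)--(C2) to invoke Theorem \ref{th:PoincareFixedPoint} via Schauder, and reading the period and $L^2$/$H^1_0$ bounds off the rigorous enclosure. Your observation about the weaker dissipativity from $d_2=\tfrac{1}{64}$ forcing a larger explicit block and longer computation is likewise consistent with the paper's discussion of the dimension of the inclusion growing with the difficulty of the parameters.
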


\begin{figure}[H]

\begin{subfigure}{0.4\textwidth}
    \includegraphics[width=0.9\linewidth, height=6cm]{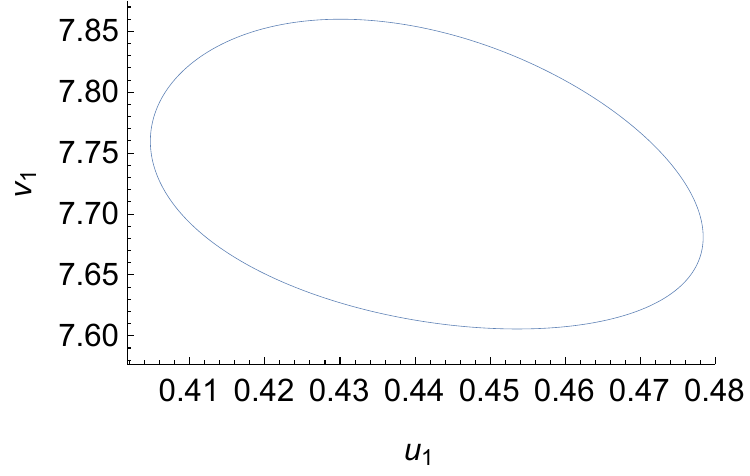}
    \caption{Projection of the numerical approximation of periodic orbit on two first modes $u_1,v_1,$
    for parameters  from Theorem \ref{th:Arioli}.
    }
\end{subfigure}
\quad
\begin{subfigure}{0.4\textwidth}
    \includegraphics[width=0.9\linewidth, height=6cm]{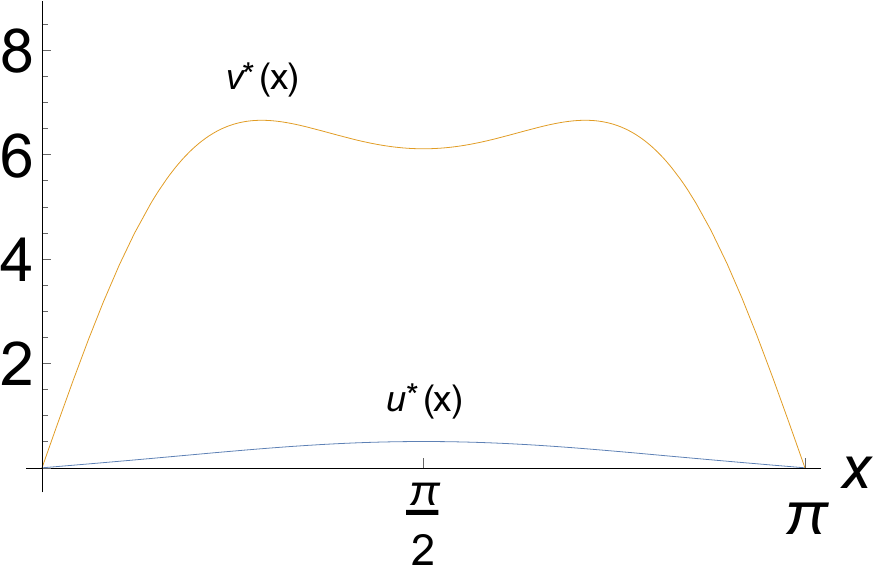}
\caption{Blue plot corresponds to  $u^*(0,x)$ and orange to the  $v^*(0,x)$ in Theorem \ref{th:Arioli}.  }
    \end{subfigure}
\caption{Periodic orbit from Theorem \ref{th:Arioli}, i.e. for $B=2.71$.}

\label{fig:image4}
\end{figure}
\begin{figure}[H]
    \centering
    \includegraphics{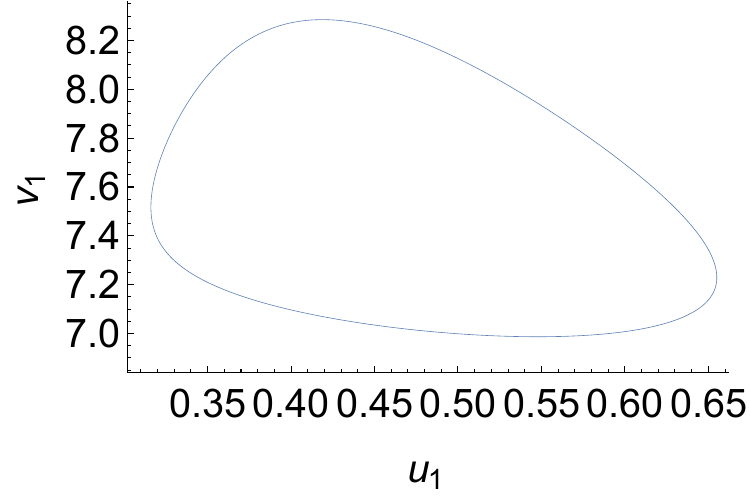}
    \quad
    \includegraphics{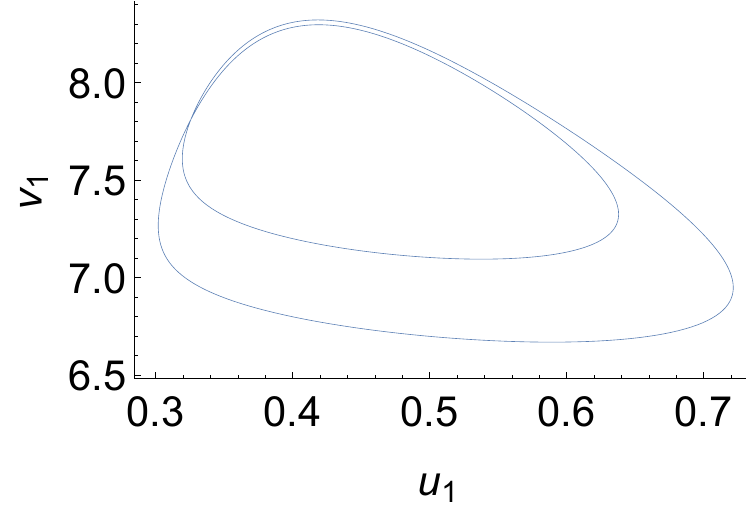}
    \caption{Numerically attracting periodic orbit for $B=2.83$ (left) and $B = 2.84$ (right).}
    \label{fig:my_label}
\end{figure}

Arioli observed a period doubling bifurcation, a phenomenon that cannot occur in the  planar ODE \eqref{eq:BrusselatorODE}. Thus, the dynamics of \eqref{eq:BrusselatorPDE} is expected to be more complicated than that of the planar ODE \eqref{eq:BrusselatorODE}. Although we do not rigorously prove the bifurcation, we show that the minimal period of the found orbits approximately doubles with a small increase in the parameter $B$, as seen in Theorem \ref{th:podwojenieOkresu}.
Specifically, as we increase $B$ and keep other parameters fixed $d_1 = 1,\; d_2 = \frac{1}{64},\; A = 1$, from numerical simulations we observe that the system goes through the period doubling bifurcation.
The bifurcation appears to occur between $B=2.83$ and $B=2.84$. We have proven the following theorem about periodic orbits for these parameters.
\begin{theorem}\label{th:podwojenieOkresu}
For $d_1=1,\;d_2=\frac{1}{64}\;,$ $B = 2.83\;A= 1$ there exist a periodic orbit with its fundamental period in the interval
$[13.2128, 13.2130]$. For $d_1=1,\;d_2=\frac{1}{64}\;,$ $B = 2.84,\;A= 1$ there exist a periodic orbit with its fundamental period in the interval $[27.2436, 27.2439]$.
\end{theorem}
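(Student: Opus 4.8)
The plan is to apply Theorem \ref{th:PoincareFixedPoint} separately for each of the two parameter sets, just as in the proof of Theorem \ref{th:BrusselatorPeriodicOrbit}, and then to strengthen the conclusion from ``periodic orbit of period $T$'' to ``periodic orbit of \emph{fundamental} period $T$'' by invoking the remark that follows Theorem \ref{th:PoincareFixedPoint}. Throughout I would restrict to the forward-invariant subspace $W$ of odd sine modes, as in the earlier computer-assisted proofs.

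First, for each $B\in\{2.83,2.84\}$ with $d_1=1$, $d_2=\frac{1}{64}$, $A=1$ fixed, I would run the nonrigorous Galerkin integration of \eqref{eq:ProjAbstractBrusselator} to locate an approximate Poincar\'e fixed point $x^*$ together with an approximate return time $T^*$ (numerically $T^*\approx 13.21$ for $B=2.83$ and $T^*\approx 27.24$ for $B=2.84$). From $x^*$ I would build the affine section $l(x)=\sum_{i=1}^n\alpha_i(x_i-x_i^*)$ and the compact convex initial set $X^0=X_P^0+X_Q^0$ following Section \ref{sec:ConstructingInitialSet}, with $X_P^0=x^*+Ar^0$ in the finite-dimensional odd-mode space $H_P$ and $X_Q^0$ a polynomially decaying tail of rate $s>1$. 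The dimension of $H_P$ and the rate $s$ must be chosen large enough that the rigorous enclosures close up; for the long doubled orbit at $B=2.84$ one should expect to need both a larger $H_P$ and a finer time step than at $B=2.83$.

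Next, I would feed $X^0$ into the rigorous integration algorithm of Section \ref{sec:AlgorithmIntegration} coupled with the Poincar\'e-map algorithm of Section \ref{sec:crossing}, producing the image $\mathcal{P}(X^0)\subset X_P^1+X_Q^1$ with $X_P^1=[x^*]+[A]q^0$. Steps (2) and (4) of that algorithm certify the transversality assumptions (1)--(2) of Theorem \ref{th:PoincareFixedPoint} (condition (2) of Lemma \ref{lem:cross}), while assumption (3) reduces, exactly as before, to the two checks $q^0_i\subset r^0_i$ for $i\in\{2,\dots,n\}$ and $X_Q^1\subset X_Q^0$. Passing these yields, through the Schauder fixed-point theorem, a point and a return time $T$ with $\varphi(T,x^*)=x^*$; the rigorous enclosure of the crossing time $\tau_l$ delivered by the algorithm is precisely what produces the stated period intervals $[13.2128,13.2130]$ and $[27.2436,27.2439]$.

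The genuinely new and hardest ingredient is certifying that $T$ is the fundamental period and not a multiple of a shorter one --- this is exactly the content of the period doubling, so it cannot be omitted. Here I would invoke the remark after Theorem \ref{th:PoincareFixedPoint}: I must exhibit times $t_1<t_2$ in the integration window on whose complementary subintervals the flow is strictly transversal, that is $\sum_{i=1}^n\alpha_i F_i(x)>0$, and such that $\varphi(t,X^0)\cap X^0=\emptyset$ for all $t\in[t_1,t_2]$. For $B=2.84$ the trajectory crosses the section near its half-period, so the delicate step is to rigorously enclose the state at that intermediate crossing and show it is disjoint from $X^0$, thereby excluding the shorter period; for $B=2.83$ the analogous check confirms the single loop. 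The difficulty is compounded by the long integration time --- roughly $27$ units for $B=2.84$ --- over which the enclosures inflate and the tail estimates degrade, so the bulk of the effort will lie in controlling this overestimation by enlarging $H_P$, increasing the decay rate $s$, and shrinking the time step until the disjointness and inclusion tests finally pass.
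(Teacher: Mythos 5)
Your proposal is correct and follows essentially the same route as the paper: the existence and period enclosures come from the rigorous integration and Poincar\'e-map algorithms combined with Theorem \ref{th:PoincareFixedPoint} (checking (C1)--(C2) for a set $X^0$ built around a nonrigorous approximation, within the odd-mode subspace $W$), and the upgrade to \emph{fundamental} period is obtained exactly as you describe, via the remark following Theorem \ref{th:PoincareFixedPoint} by certifying transversality outside $[t_1,t_2]$ and disjointness $\varphi(t,X^0)\cap X^0=\emptyset$ on it. Your observations about the practical burden for $B=2.84$ (larger $H_P$, finer steps, controlling inflation over the roughly doubled integration time) match the paper's own account of how the computational cost grows with the orbit's complexity.
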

 Note that we only obtain rigorous computer assisted proofs for the parameter values $B$ before and after the expected bifurcation. We believe that conducting computer assisted proof of the bifurcation existence would be interesting and challenging problem.

 Other nontrivial dynamics of the Brusselator PDE \eqref{eq:BrusselatorPDE} was investigated in \cite{BrusselatorChaosAnd2DTori}, where the numerical evidence on the existence of 2-dimensional attracting tori was shown. We observe, only numerically, the same phenomenon, which exists only in PDE Brusselator model and not in the corresponding planar ODE \eqref{eq:BrusselatorODE}.
 First, we rigorously observe the existence of the periodic orbit when the diffusion rates $d_1$ and $d_2$ are equal to each other. We have proved that for parameter values $d_1=0.02, d_2=0.02, B = 2, A= 1$.
If we decrease the diffusion rates $d_1$ and $d_2$ (keeping them equal to each other)
we numerically observe the emergence of two dimensional attracting torus, cf. Fig. \ref{fig:2dTori}. This is a numerical confirmation of the phenomenon which was firstly observed in  \cite{BrusselatorChaosAnd2DTori}. Note that the same paper also contains the numerical evidence for the presence of chaos in the same system of equations. These dynamical phenomena are further interesting challenge for computer
assisted proofs for the Brusselator system.

\begin{figure}[H]
    \centering
    \includegraphics[width=0.5\textwidth]{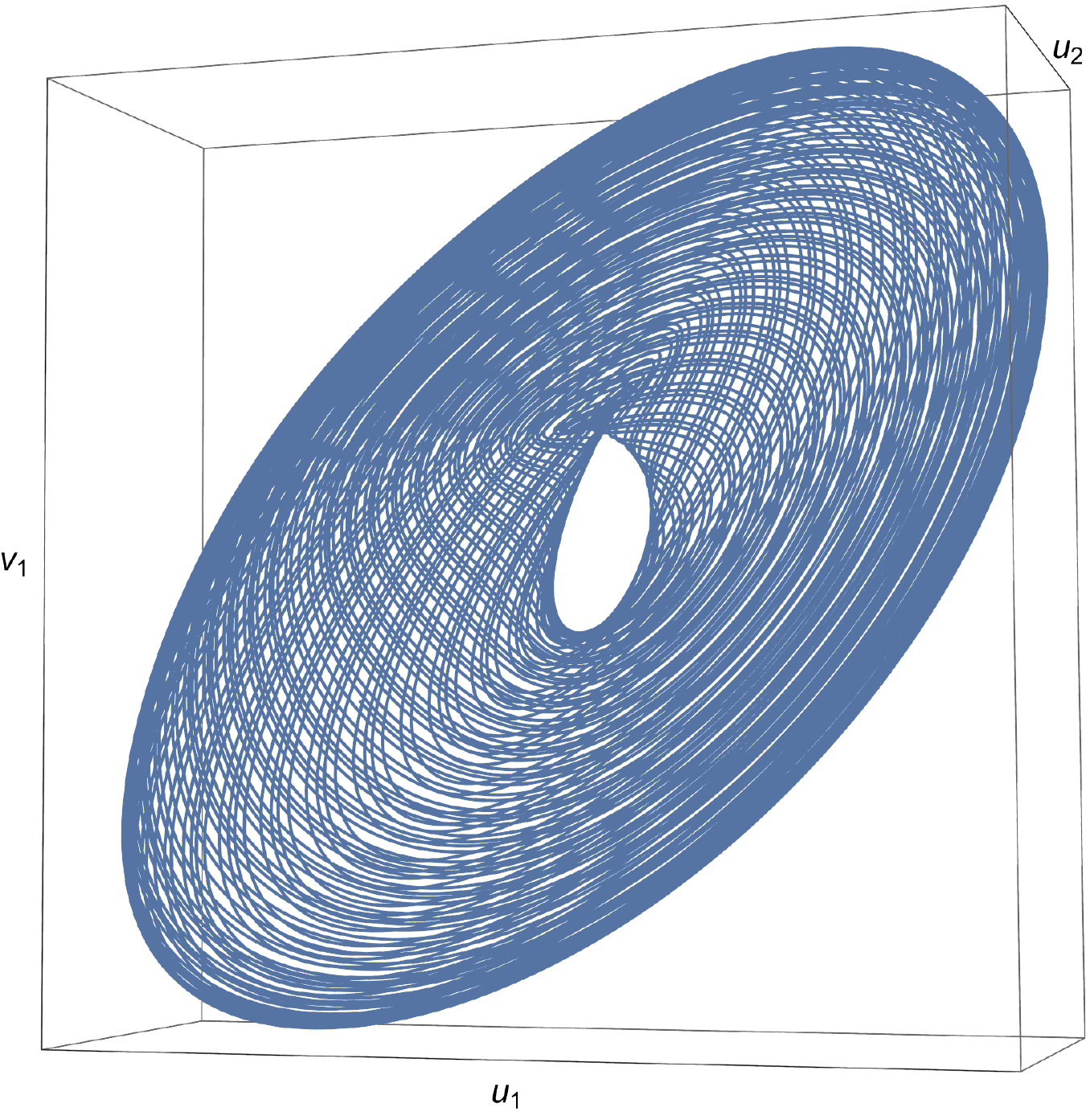}
    \caption{Two dimensional numerically attracting torus observed in for $d_1= 0.009,\; d_2=0.009,\; B=2.1,\;A=1.$}
    \label{fig:2dTori}
\end{figure}

\section{Algebra}\label{sec:algebra}
This section contains the technical results on the estimates of the convolutions of the sine and cosine Fourier series represented with uniform estimates on the decay of coefficients. Such operations are performed during the calculations in the algorithm as part of the computer-assisted proofs, where we require uniform estimates on the tails of the series.  We work with sequences $\{u_i\}_{i=1}^\infty$ such that first $n$ coefficients are given by some numbers or intervals and the remainders of the sequence satisfy $u_i\in\frac{[C_u^-,C_u^+]}{i^s}$ for some $C_u^-\leq C_u^+$ and $s\in\mathbb{R}.$

The decay of the Fourier coefficients for smooth periodic functions is related with their regularity: if a periodic function  $u:\mathbb{R}\to \mathbb{R}$ is of class $C^s$, then its coefficients must decay as $\frac{1}{i^s}.$ Clearly, the product of two $C^s$ functions also has regularity $C^s$. This is related with our results of this section, which state that if two functions, represented in the sine or cosine Fourier series have some given decay of the Fourier coefficients of the form $O\left(\frac{1}{i^s}\right)$ for $s>1$ then their product must have the same decay. Moreover we provide the exact estimates for the Fourier coefficients of the product: such estimates  are needed in rigorous computations of nonlinear polynomial terms present in the equations.

The following results are basic, so we skip the proofs. We  use them several times is the following considerations.
\begin{proposition}
Let $s>1.$ The following inequality holds
\begin{equation}\label{eq:sumEstimation}
\sum_{i=1+n}^\infty \frac{1}{i^s}\leq \frac{n^{1-s}}{s-1} .
\end{equation}
\end{proposition}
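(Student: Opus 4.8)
The plan is to prove the bound by comparing the series with an integral, i.e.\ to invoke the integral test in its quantitative form. First I would observe that for $s>1$ the summand arises from the function $f(x)=x^{-s}$, which is positive, continuous and strictly decreasing on $[1,\infty)$. This monotonicity is the only structural property of $f$ that the argument uses.

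Next, I would exploit this monotonicity to bound each term of the tail by an integral over the preceding unit interval: for every integer $i\geq n+1$ and every $x\in[i-1,i]$ we have $f(i)\leq f(x)$, and hence $\frac{1}{i^s}=f(i)\leq\int_{i-1}^{i}f(x)\,dx$. Summing this inequality over $i$ from $n+1$ to $\infty$ merges the consecutive integration intervals into a single improper integral starting at $n$:
\[
\sum_{i=n+1}^\infty \frac{1}{i^s}\;\leq\;\sum_{i=n+1}^\infty\int_{i-1}^{i}\frac{dx}{x^s}\;=\;\int_n^\infty\frac{dx}{x^s}.
\]

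Finally, I would evaluate the improper integral. Since $s>1$ we have $1-s<0$, so $x^{1-s}\to 0$ as $x\to\infty$, and therefore $\int_n^\infty x^{-s}\,dx=\frac{n^{1-s}}{s-1}$, which is precisely the claimed bound. The hypothesis $s>1$ enters exactly at this point, simultaneously guaranteeing convergence of the tail and fixing the sign that makes the antiderivative evaluate to a finite positive quantity; for $s\leq 1$ the tail diverges and no such estimate can hold. There is essentially no obstacle in this argument, since the result is a classical integral comparison; the only point demanding the slightest care is to check that the index alignment in the comparison (bounding $f(i)$ by the integral over $[i-1,i]$ rather than over $[i,i+1]$) is the choice that produces the lower endpoint $n$, rather than $n+1$, in the final integral.
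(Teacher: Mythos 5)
Your proof is correct: the integral comparison $\sum_{i=n+1}^\infty i^{-s} \leq \int_n^\infty x^{-s}\,dx = \frac{n^{1-s}}{s-1}$ is carried out with the right index alignment, and the hypothesis $s>1$ is used exactly where needed. The paper explicitly skips the proof of this proposition as basic, and your argument is precisely the standard one it implicitly relies on, so there is nothing to add.
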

\begin{proposition}
Let $s>1$. If $a,b>0$ then the following inequality holds
\begin{equation}\label{eq:inequality}
    (a+b)^s\leq 2^{s-1}(a^s+b^s).
\end{equation}
\end{proposition}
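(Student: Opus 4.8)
The plan is to derive this inequality from the convexity of the power function $t\mapsto t^s$ on $(0,\infty)$, which is the structural reason behind the constant $2^{s-1}$. Since $s>1$, the second derivative $s(s-1)t^{s-2}$ is positive, so $\phi(t)=t^s$ is convex; in fact only midpoint convexity will be needed.

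First I would apply the midpoint convexity inequality to the points $a$ and $b$:
\begin{equation*}
\left(\frac{a+b}{2}\right)^s = \phi\!\left(\frac{a+b}{2}\right)\leq \frac{\phi(a)+\phi(b)}{2}=\frac{a^s+b^s}{2}.
\end{equation*}
Then I would multiply both sides by $2^s$, which gives exactly
\begin{equation*}
(a+b)^s\leq 2^{s-1}(a^s+b^s),
\end{equation*}
completing the argument.

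Alternatively, one can reduce to a one-variable minimization by homogeneity. Both sides are homogeneous of degree $s$ in $(a,b)$, so it suffices to treat the normalized case $a+b=1$; writing $a=t$ and $b=1-t$ for $t\in(0,1)$, the claim becomes $1\leq 2^{s-1}g(t)$ with $g(t)=t^s+(1-t)^s$. A direct computation of $g'(t)=s t^{s-1}-s(1-t)^{s-1}$ shows that the unique critical point is $t=\tfrac12$, which is a minimum because $g$ is convex, and there $g(\tfrac12)=2^{1-s}$. Hence $2^{s-1}g(t)\geq 2^{s-1}\cdot 2^{1-s}=1$, which is the desired bound.

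There is no genuine obstacle here: the only point requiring care is the hypothesis $s>1$, which guarantees convexity. For $s=1$ equality holds with $2^{s-1}=1$, while for $s<1$ the function $\phi$ is concave and the inequality reverses, so the restriction cannot be dropped.
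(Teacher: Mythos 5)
Your proof is correct. Note that the paper itself offers no proof to compare against: it states that these propositions are ``basic'' and explicitly skips their proofs. Your midpoint-convexity argument (with the homogeneity/minimization variant as a cross-check) is the standard and complete justification, and your closing remark about the necessity of $s>1$ -- equality at $s=1$, reversal for $s<1$ -- is also accurate.
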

The following lemma gives us the estimates on the result of multiplication of $u$ and $v$ which are both represented in the sine Fourier series. The first $n$ coefficients of $u$ and $v$ are given explicitly and the coefficients indexed by numbers larger than $n$ are expressed by the polynomial decay.
\begin{lemma}\label{lem:sinTimesSin}
Assume that
\begin{equation}
u(x) = \sum_{i=1}^\infty u_i\sin(ix), \quad v(x) = \sum_{i=1}^\infty v_i \sin(ix).
\end{equation}
Moreover we assume that for some
$n\in\mathbb{N}$ and $s>1$ the following estimates hold
\begin{equation}
    u_i \in \frac{C_u[-1,1]}{i^s}\quad \textrm{and}\quad v_i \in \frac{C_v[-1,1]}{i^s}\quad\text{for $i>n,$}
\end{equation}
where $C_v>0,C_u>0$. Then
\begin{equation}
(uv)(x) = (uv)_0 + \sum_{k=1}^\infty (uv)_k \cos(kx),
\end{equation}
with
\begin{equation}
    (uv)_0 = \frac{1}{2}\sum_{i=1}^\infty u_iv_i\quad
    (uv)_k =
    \frac{1}{2}\sum_{i=1}^\infty u_{i+k}v_i +
    \frac{1}{2}\sum_{i=1}^\infty u_{i}v_{i+k}-
    \frac{1}{2}\sum_{i=1}^{k-1} u_{i}v_{k-i}.
\end{equation}
and the following estimates hold for the coefficients of the product. For $k=0$ we have
    \begin{equation}
        (uv)_0 \in \frac{1}{2}\sum_{i=1}^n u_i v_i + \frac{C_uC_v}{2}\frac{ n^{1-2s}}{ 2s - 1 }[-1,1],
    \end{equation}
for $ 1\leq k \leq 2n$ we have
    \begin{equation}
        (uv)_k \in
        \frac{1}{2}\sum_{i=1}^n u_i v_{i+k}  +
        \frac{1}{2}\sum_{i=1}^n u_{i+k} v_{i} - \frac{1}{2}\sum_{i=1}^{k-1}u_{i}v_{k-i} + C_uC_v\frac{ n^{1-2s}}{ 2s - 1 } [-1,1],
    \end{equation}
    and for $k>2n$ we have
    \begin{equation}
        (uv)_k \in \frac{D[-1,1]}{k^s},\quad D =
    \frac{1}{2}\left(
    \sum_{i=1}^n (C_u|v_i| + C_v|u_i|)
    \left(1+\left(\frac{2n+1}{2n+1-i}\right)^s\right)+
    C_uC_v(2+2^s)\frac{n^{1-s}}{s-1}
    \right).
    \end{equation}
\end{lemma}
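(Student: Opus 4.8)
The plan is to first derive the cosine--series representation of the product and then prove the three coefficient enclosures by splitting every infinite sum into an explicitly retained low-index part and a tail controlled by Propositions \eqref{eq:sumEstimation} and \eqref{eq:inequality}. Since $u$ and $v$ are odd, $uv$ is even and has a cosine expansion. I would start from the product-to-sum identity $\sin(ix)\sin(jx)=\tfrac12\bigl(\cos((i-j)x)-\cos((i+j)x)\bigr)$ and expand
\[
uv = \frac12\sum_{i,j\ge1} u_i v_j\bigl(\cos((i-j)x) - \cos((i+j)x)\bigr).
\]
Collecting the coefficient of $\cos(kx)$ yields the constant term from $i=j$, the contributions $i-j=\pm k$ (giving $\sum_i u_{i+k}v_i$ and $\sum_i u_i v_{i+k}$), and the contributions $i+j=k$ with $1\le i\le k-1$ (giving $-\sum_{i=1}^{k-1}u_iv_{k-i}$). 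Absolute convergence of every rearrangement follows from $s>1$, so this reproduces the stated formulas for $(uv)_0$ and $(uv)_k$.

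For $k=0$ and for $1\le k\le 2n$ I would split each sum at the index $n$. The low part (index $\le n$) is retained explicitly; in the tail both factors carry the decay $C_u/i^s$, $C_v/i^s$, so each product is bounded by $C_uC_v/i^{2s}$, and \eqref{eq:sumEstimation} with exponent $2s>1$ gives $\tfrac{C_uC_v}{2}\tfrac{n^{1-2s}}{2s-1}$ per infinite sum. For $k=0$ there is one such tail; for $1\le k\le 2n$ there are two (from the two infinite sums), while the finite convolution $\sum_{i=1}^{k-1}$ is kept as is. This produces exactly the stated interval enclosures.

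The case $k>2n$ (so $k\ge 2n+1$) is the heart of the lemma, and the goal is to factor out $k^{-s}$ from every piece. For $\tfrac12\sum_i u_{i+k}v_i$ I split at $i=n$: for $i\le n$ I use $(i+k)^s\ge k^s$ to get $|u_{i+k}|\le C_u/k^s$, contributing $\tfrac{C_u}{2k^s}\sum_{i=1}^n|v_i|$; for $i>n$ both factors decay, giving a tail $\tfrac{C_uC_v}{2k^s}\tfrac{n^{1-s}}{s-1}$, and $\tfrac12\sum_i u_iv_{i+k}$ is symmetric. For the convolution $-\tfrac12\sum_{i=1}^{k-1}u_iv_{k-i}$ I split into $i\le n$, $k-i\le n$, and both $>n$. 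When $i\le n$ the index $k-i$ still exceeds $n$, and I must express $|v_{k-i}|\le C_v/(k-i)^s$ in terms of $k^{-s}$; since $\tfrac{k}{k-i}$ is decreasing in $k$, its maximum over $k\ge 2n+1$ occurs at $k=2n+1$, producing the factor $\bigl(\tfrac{2n+1}{2n+1-i}\bigr)^s$, and the symmetric range $k-i\le n$ gives the matching term. For the double tail $\sum_{n<i<k-n}i^{-s}(k-i)^{-s}$ I would invoke \eqref{eq:inequality} in the form $k^s\le 2^{s-1}(i^s+(k-i)^s)$ to obtain
\[
\frac{1}{i^s(k-i)^s}\le \frac{2^{s-1}}{k^s}\Bigl(\frac{1}{i^s}+\frac{1}{(k-i)^s}\Bigr),
\]
and then bound each of the two resulting sums by \eqref{eq:sumEstimation}, giving $\tfrac{2^s}{k^s}\tfrac{n^{1-s}}{s-1}$. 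Collecting the $\sum_{i=1}^n$ contributions yields $\tfrac12\sum_{i=1}^n(C_u|v_i|+C_v|u_i|)\bigl(1+(\tfrac{2n+1}{2n+1-i})^s\bigr)$, while the three double-tail pieces combine (the $\tfrac12+\tfrac12$ from the first two sums plus $2^{s-1}$ from the convolution) to $\tfrac12 C_uC_v(2+2^s)\tfrac{n^{1-s}}{s-1}$, which is exactly the constant $D$.

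The main obstacle is matching the constants in $D$ for $k>2n$: a naive treatment of the convolution double tail (for instance splitting the range at $i=k/2$ and using $(k-i)^s\ge(k/2)^s$) overshoots the target constant, and it is the convexity inequality \eqref{eq:inequality} together with the sharp choice $k=2n+1$ in the factor $\bigl(\tfrac{2n+1}{2n+1-i}\bigr)^s$ that recovers precisely the claimed bound. Everything else is bookkeeping of finite sums and geometric-type tails.
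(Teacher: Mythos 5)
Your proposal is correct and follows essentially the same route as the paper's own proof: the product-to-sum expansion, splitting each sum at index $n$, the bound $(i+k)^s\geq k^s$ for the shifted sums, the monotonicity of $k/(k-i)$ evaluated at $k=2n+1$ for the convolution edges, and the inequality $\frac{1}{i^s(k-i)^s}\leq \frac{2^{s-1}}{k^s}\left(\frac{1}{i^s}+\frac{1}{(k-i)^s}\right)$ for the middle of the convolution, with all constants combining to the stated $D$ exactly as in the paper.
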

The above lemma provides explicit formulas for estimating the coefficients with indexes $0$ to $2n$ of the cosine Fourier series.
This is motivated by the fact that if the sine expansions of $u$ and $w$ are finite and concentrate on the first $n$ coefficients, then the representation of $uv$ is also finite and only first $2n+1$ coefficients are nonzero. In the formulas in the above lemma, whenever the terms $u_i$ and $v_i$ appear with $i > n$ in the computation, we substitute them with the intervals $\frac{C_u}{i^s}[-1,1]$ and $\frac{C_v}{i^s}[-1,1]$, respectively. Additionally, we can use other available estimates of $u_i$. For example we can use the fact that $u_i\in \frac{[C_u^-,C_u^+]}{i^s}$ (with upper and lower bound different from each other), or that $u_i$ is zero for odd coefficients.

The  coefficients for $k > 2n$ of the cosine expansion in the above lemma are given by the uniform polynomial decay with the same rate as the sine series  for $u$ and $v.$

\begin{proof}[proof of Lemma \ref{lem:cosTimesSin}]
We have
\begin{align*}
    uv &= \left(\sum _{i_1=1}^\infty \sin(i_1x)\right)
         \left(\sum _{i_2=1}^\infty \sin(i_2x)\right) =
         \frac{1}{2}
         \sum_{i_1,i_2=1}^{\infty}
         u_{i_1}v_{i_2}
         \cos((i_1-i_2) x) -
         \frac{1}{2}
         \sum_{i_1,i_2=1}^{\infty}
         u_{i_1}v_{i_2}
         \cos((i_1+i_2) x)
        \\&=
        \frac{1}{2}\sum_{i_1 = i_2}u_{i_1} v_{i_2}
        +
        \frac{1}{2}\sum_{k=1}^\infty \left(
        \sum_{i_1 - i_2 = k}u_{i_1} v_{i_2}
        +
        \sum_{i_1 - i_2 = -k}u_{i_1} v_{i_2}
        -
        \sum_{i_1 - i_2 = k}u_{i_1} v_{i_2}
        \right) \cos(kx).
\end{align*}
We express all coefficients of the resultant cosine series separately, using the formulas
\begin{equation*}
    (uv)_0 = \frac{1}{2}\sum_{i=1}^\infty u_iv_i,
\end{equation*}
and
\begin{equation*}
    (uv)_k =
    \frac{1}{2}\sum_{i=1}^\infty u_{i+k}v_i +
    \frac{1}{2}\sum_{i=1}^\infty u_{i}v_{i+k}-
    \frac{1}{2}\sum_{i=1}^{k-1} u_{i}v_{k-i},
\end{equation*}
for $k\geq 1.$
Observe that for $i>n$ there holds
\begin{equation*}
    |u_iv_i|\leq \frac{C_vC_u}{i^{2s}}.
\end{equation*}
So,  we obtain
\begin{equation*}
      (uv)_0 = \sum_{i=1}^n u_iv_i +\sum_{i=n+1}^\infty u_iv_i\in
      \sum_{i=1}^n u_iv_i +C_uC_v\sum_{i=n+1}\frac{1}{i^{2s}} [-1,1]
     \subset \sum_{i=1}^n u_iv_i +
     C_uC_v\frac{n^{1-2s}}{-1+2s}[-1,1].
\end{equation*}
Now, observe that for $i>n$ the following estimates hold
\begin{equation*}
    |u_iv_{i+k}|\leq \frac{C_vC_u}{i^{2s}},
    \quad
    |u_{i+k}v_i|\leq \frac{C_vC_u}{i^{2s}},
\end{equation*}
whereas we deduce that for $1 \leq k\leq 2n$ we have
\begin{align*}
(uv)_k &=
    \frac{1}{2}\sum_{i=1}^n u_{i+k}v_i +
    \frac{1}{2}\sum_{i=1}^n u_{i}v_{k+i}
    -\frac{1}{2}\sum_{i=1}^{k-1} u_{i}v_{k-i}
    +
    \frac{1}{2}\sum_{i=n+1}^\infty (u_{i+k}v_i + u_{i}v_{i+k})\\
     &\in
     \frac{1}{2}\sum_{i=1}^n u_{i+k}v_i +
    \frac{1}{2}\sum_{i=1}^n u_{i}v_{k+i}
    -\frac{1}{2}\sum_{i=1}^{k-1} u_{i}v_{k-i}
    + C_vC_u\frac{n^{1-2s}}{2s-1}[-1,1].
\end{align*}
Finally, for $k > 2n$ we need to estimate the sums in the expression
\begin{align*}
    (uv)_k=
    \frac{1}{2}
     &\Bigg[
    \sum_{i=1}^n u_{i+k}v_i +
    \sum_{i=1}^n u_{i}v_{i+k}+
    \sum_{i=n+1}^\infty u_{i+k}v_i+
    \sum_{i=n+1}^\infty u_{i}v_{i+k}\\
    &+
    \sum_{i=1}^n u_{k-i}v_i +
    \sum_{i=k-n}^{k-1} u_{k-i}v_i +
    \sum_{i=n+1}^{k-n-1} u_{k-i}v_i
    \Bigg].
\end{align*}
To this end observe, that we have
\begin{equation*}
    \sum_{i=1}^n |u_{i+k}v_i|
    \leq
    \sum_{i=1}^n \frac{C_u}{(i+k)^s}|v_i|
    \leq
    \frac{C_u}{k^s}\sum_{i=1}^n|v_i|,\quad
    \sum_{i=1}^n |u_{i}v_{i+k}| \leq
    \frac{C_v}{k^s}\sum_{i=1}^n|u_i|,
\end{equation*}
\begin{equation*}
    \sum_{i=n+1}^\infty |u_{i+k}v_i|\leq
    \sum_{i=n+1}^\infty\frac{C_u C_v}{i^s(i+k)^s}
    \leq
    \frac{C_u C_v}{k^s}\sum_{i=n+1}^\infty\frac{1}{i^s}
    \leq \frac{C_u C_v}{k^s} \frac{n^{1-s}}{s-1},
    \
    \sum_{i=n+1}^\infty |u_{i}v_{i+k}|\leq \frac{C_u C_v}{k^s} \frac{n^{1-s}}{s-1}.
\end{equation*}
 We also obtain
 \begin{equation*}
     \sum_{i=1}^n |u_{k-i}v_i| \leq
     \frac{C_u}{k^s}\sum_{i=1}^n |v_i|
     \left(\frac{k}{k-i}\right)^s
     =
     \frac{C_u}{k^s}\sum_{i=1}^n |v_i|
     \left(1+\frac{i}{k-i}\right)^s
     \leq
     \frac{C_u}{k^s}\sum_{i=1}^n |v_i|\left(1+\frac{i}{2n+1-i}\right)^s.
 \end{equation*}
 The last inequality follows from fact that the sequence $\{(1+\frac{i}{k-i})\}_{k\geq 2n+1}$ is decreasing with respect to $k.$ Similarly, we have

 \begin{equation*}
     \sum_{i=k-n}^{k-1} |u_{k-i}v_i| =
     \sum_{i=1}^{n} |u_iv_{k-i}|
    \leq
     \frac{C_v}{k^s}\sum_{i=1}^n |u_i|\left(1+\frac{i}{2n+1-i}\right)^s.
 \end{equation*}
 The last infinite sum is estimated as follows
 \begin{align*}
     \sum_{i=n+1}^{k-1 - n}|u_i v_{k-i}|&\leq C_u C_v \sum_{i=n+1}^{k-1 - n} \frac{1}{i^s(k-i)^s} =
     \frac{C_u C_v}{k^s}
     \sum_{i=n+1}^{k-1 - n} \left
     (\frac{1}{i} +\frac{1}{k-i}\right)^s
     \\&\leq
     \frac{2^{s-1} C_u C_v}{k^s}
     \left(
     \sum_{i=n+1}^{k-1 - n}\frac{1}{i^s} +
     \sum_{i=n+1}^{k-1 - n}\frac{1}{(k -i)^s}
     \right)
     \leq
     \frac{2^{s-1} C_u C_v}{k^s} \frac{2n^{-s+1}}{s-1}.
 \end{align*}
Combining the estimates of all six sums yields directly the assertion of the lemma.
\end{proof}

The next lemma is analogous to Lemma \ref{lem:sinTimesSin} and gives us the estimate on the result of multiplication of $u$ and $v$ which are represented in cosine and sine series, respectively.
The first $n+1$ coefficients of $u$ and the first $n$ coefficients of $v$ are given explicitly and the rest of them is expressed by the polynomial decay.

\begin{lemma}\label{lem:cosTimesSin}
Assume that
\begin{equation}
u(x) = u_0 + \sum_{i=1}^\infty u_i\cos(ix), \quad v(x) = \sum_{i=1}^\infty v_i \sin(ix).
\end{equation}
Moreover, assume that for some
$n\in\mathbb{N}$ and $s>1$ the following bounds hold
\begin{equation}
    u_i \in \frac{C_u[-1,1]}{i^s}\quad \textrm{and}\quad
    v_i \in \frac{C_v[-1,1]}{i^s}\quad\text{for $i>n,$}
\end{equation}
where $C_v>0,C_u>0.$
Then
\begin{equation}
(uv)(x) = \sum_{k=1}^\infty (uv)_k\sin(kx),
\end{equation}
with
\begin{equation}
(uv)_k = u_0v_k +
\frac{1}{2}\sum_{i=1}^\infty u_iv_{i+k}
-\frac{1}{2}\sum_{i=1}^\infty u_{i+k}v_i
+\frac{1}{2}\sum_{i=1}^{k-1}  u_{i}v_{k-i},
\end{equation}
and the following estimates on the coefficients of the product hold for $1\leq k\leq 2n$
\begin{equation}
  (uv)_{k}\in u_0v_k +
  \frac{1}{2} \left(
    \sum_{i = 1}^n v_{i+k}u_{i} -
    \sum_{i=  1}^n v_{i}u_{k + i} +
    \sum_{i = 1 }^{k-1}v_{i}u_{k - i}
    \right)
    + C_uC_v\frac{ n^{1-2s}}{ 2s - 1 } [-1,1],
\end{equation}
while  for $k>2n$ we have
\begin{equation}
        (uv)_k \in \frac{D[-1,1]}{k^s},\quad D = |u_0|C_v+
    \frac{1}{2}\left(
    \sum_{i=1}^n (C_u|v_i| + C_v|u_i|)\left(1+\left(\frac{2n+1}{2n+1-i}\right)^s\right)+
    C_uC_v(2+2^s)\frac{n^{1-s}}{s-1}
    \right).
\end{equation}
\end{lemma}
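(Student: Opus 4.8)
The plan is to follow closely the proof of Lemma \ref{lem:sinTimesSin}, the only structural differences being the extra contribution of the constant mode $u_0$ and the use of a product-to-sum identity that produces a sine (rather than cosine) series. First I would expand the product using $\cos(ix)\sin(jx) = \tfrac12\big(\sin((i+j)x)+\sin((j-i)x)\big)$ together with the term $u_0\sin(jx)$, obtaining
\begin{equation*}
uv = u_0\sum_{j=1}^\infty v_j\sin(jx) + \frac12\sum_{i,j=1}^\infty u_i v_j\big(\sin((i+j)x)+\sin((j-i)x)\big).
\end{equation*}
Collecting the coefficient of $\sin(kx)$ for $k\geq1$: the first term gives $u_0 v_k$; the terms with $i+j=k$ give $\frac12\sum_{i=1}^{k-1}u_i v_{k-i}$; the terms with $j-i=k$ give $\frac12\sum_{i=1}^\infty u_i v_{i+k}$; and the terms with $j-i=-k$ (using $\sin(-kx)=-\sin(kx)$) give $-\frac12\sum_{i=1}^\infty u_{i+k}v_i$. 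This reproduces the claimed formula for $(uv)_k$.

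For $1\leq k\leq 2n$ I would keep the finite convolution $\frac12\sum_{i=1}^{k-1}u_i v_{k-i}$ as it stands, evaluating any factor whose index exceeds $n$ through its interval enclosure $\tfrac{C_u}{i^s}[-1,1]$ or $\tfrac{C_v}{i^s}[-1,1]$, and split each of the two infinite sums into its first $n$ explicit terms plus a tail. Since $i>n$ forces both $i$ and $i+k$ to exceed $n$, each tail is controlled by $|u_i v_{i+k}|,\,|u_{i+k}v_i|\leq \tfrac{C_u C_v}{i^{2s}}$ and \eqref{eq:sumEstimation} (applied with exponent $2s$), giving a bound $\tfrac12 C_u C_v\tfrac{n^{1-2s}}{2s-1}$ per tail and hence total interval radius $C_u C_v\tfrac{n^{1-2s}}{2s-1}$, exactly as stated.

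The substantive part is the decay estimate for $k>2n$, and this is where I expect the main difficulty. Here $|v_k|\leq C_v/k^s$ immediately yields $|u_0 v_k|\leq |u_0|C_v/k^s$. For $\frac12\sum_i u_i v_{i+k}$ and $-\frac12\sum_i u_{i+k}v_i$ I would split at $i=n$, using $(i+k)^s\geq k^s$ on the low part and $\sum_{i>n} i^{-s}\leq \tfrac{n^{1-s}}{s-1}$ on the tail, extracting a factor $k^{-s}$ throughout. The convolution $\sum_{i=1}^{k-1}u_i v_{k-i}$ is the delicate term: I would split it into the blocks $i\leq n$, $i\geq k-n$, and the middle block $n<i<k-n$. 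On the two outer blocks exactly one index is $\leq n$ while the complementary index exceeds $n$ (because $k>2n$), and after factoring out $k^{-s}$ the weights $\big(\tfrac{k}{k-i}\big)^s=\big(1+\tfrac{i}{k-i}\big)^s$ are monotone decreasing in $k$, hence maximized at $k=2n+1$, yielding the factor $\big(\tfrac{2n+1}{2n+1-i}\big)^s$. On the middle block I would write $\tfrac{1}{i^s(k-i)^s}=k^{-s}\big(\tfrac1i+\tfrac1{k-i}\big)^s$ and apply the convexity bound \eqref{eq:inequality} followed by \eqref{eq:sumEstimation} to each of the two resulting sums, producing $2^s\tfrac{n^{1-s}}{s-1}$.

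Finally I would assemble the constant $D$. The three infinite tails contribute $\tfrac{n^{1-s}}{s-1}$ with coefficients $1$, $1$, and $2^s$, giving the factor $(2+2^s)$; the two outer blocks together with the low parts of the first two sums give the weighted finite sum $\sum_{i=1}^n(C_u|v_i|+C_v|u_i|)\big(1+(\tfrac{2n+1}{2n+1-i})^s\big)$; and the constant mode contributes $|u_0|C_v$. Combining these with the overall factor $\tfrac12$ in front of the three sums reproduces $D$ exactly. The only genuinely nonroutine step is the middle-block estimate for $k>2n$, where monotonicity in $k$ and the convexity inequality must be combined carefully to pull out the uniform $k^{-s}$ decay; everything else is a bookkeeping adaptation of the argument already carried out for Lemma \ref{lem:sinTimesSin}.
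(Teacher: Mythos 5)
Your proposal is correct and follows essentially the same route as the paper: the paper's proof of Lemma \ref{lem:cosTimesSin} likewise expands the product via the same trigonometric identity, isolates the $u_0v_k$ term, splits the two infinite sums at $i=n$ and the convolution into the blocks $i\leq n$, $n<i<k-n$, $i\geq k-n$, and then imports the tail, outer-block monotonicity, and middle-block convexity estimates (via \eqref{eq:sumEstimation} and \eqref{eq:inequality}) verbatim from Lemma \ref{lem:sinTimesSin}. The only difference is presentational: the paper states these estimates by reference to Lemma \ref{lem:sinTimesSin}, whereas you spell them out explicitly.
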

\begin{proof}
The argument is analogous to the proof of Lemma \ref{lem:sinTimesSin}. Namely, we have the following representation of the product
\begin{align*}
    uv &=
    \left(u_0 + \sum_{i_1=1}^\infty u_{i_1}
    \cos(i_1 x)\right)
    \left(\sum_{i_2=1}^\infty v_{i_2} \sin(i_2 x)\right)
    \\
    &=
    \sum_{k=1}^\infty u_0v_k \sin(k x)
     +
    \frac{1}{2}\sum_{i_1,i_2=1}^{\infty} u_{i_1}v_{i_2} \sin(i_2-i_1) +
    \frac{1}{2}\sum_{i_1,i_2=1}^{\infty} u_{i_1}v_{i_2} \sin(i_1+i_2)
    \\&= \sum_{k=1}^\infty
    \left(
    u_0v_k
    + \frac{1}{2}\sum_{i_2-i_1 = k} u_{i_1}v_{i_2}
    -\frac{1}{2}\sum_{i_2-i_1 = -k} u_{i_1}v_{i_2}
    + \frac{1}{2}\sum_{i_2+i_1 = k} u_{i_1}v_{i_2}
    \right)\sin(k x)
\end{align*}
So, for natural $k\geq 1$ we  represent all coefficients using the formulas
\begin{equation}
(uv)_k = u_0v_k +
\frac{1}{2}\sum_{i=1}^\infty u_iv_{i+k}
-\frac{1}{2}\sum_{i=1}^\infty u_{i+k}v_i
+\frac{1}{2}\sum_{i=1}^{k-1}  u_{i}v_{k-i},
\end{equation}
For $1 \leq k\leq 2n$ we write
\begin{equation*}
(uv)_k = u_0v_k +
\frac{1}{2}\sum_{i=1}^n u_iv_{i+k}
-\frac{1}{2}\sum_{i=1}^n u_{i+k}v_i
+\frac{1}{2}\sum_{i=1}^{k-1}  u_{i}v_{k-i}
+\frac{1}{2}\sum_{i=n+1}^\infty (u_iv_{i+k} -u_{i+k}v_i)
.
\end{equation*}
The infinite sums in the above formula are estimated in the same way as in Lemma \ref{lem:sinTimesSin}.
For $k>2n$ we have
\begin{align*}
    (uv)_k &= u_0v_k +
    \frac{1}{2}\sum_{i=1}^n u_iv_{i+k}
-\frac{1}{2}\sum_{i=1}^n u_{i+k}v_i
+\frac{1}{2}\sum_{i=n+1}^\infty u_iv_{i+k} -\frac{1}{2}\sum_{i=n+1}^\infty u_{i+k}v_i
\\&+\frac{1}{2}\sum_{i=1}^{n}  u_{i}v_{k-i} +
\frac{1}{2}\sum_{i=k-n}^{k-1}  u_{i}v_{k-i}
+ \frac{1}{2}\sum_{i=n+1}^{k-n-1}u_{i}v_{k-i}.
\end{align*}
It is easy to see that the first component of the above sum can be estimated in the following way $|u_0v_k|\leq \frac{|u_0|C_v}{k^s}.$ The remaining infinite sums are estimated in the same way as in Lemma \ref{lem:sinTimesSin}.
\end{proof}

The following simple lemmas are useful to implement operations on the infinite interval vectors. They can be used when working both with sine and cosine Fourier series.
\begin{lemma}
Assume that the sequence $\{u_i \}_{i=1}^\infty$ satisfies
\begin{equation*}
     u_i\in [u_i^-,u_i^+]\ \
     \text{ for }\ \ i\leq n \ \ \text{ and }\ \
     u_i \in \frac{[C_u^-,C_u^+]}{i^s}\ \
     \text{ for  }\ \ i>n .
\end{equation*}
If $k<n$, then
\begin{equation*}
    u_i \in [D_u^-,D_u^+]\quad\text{for } i>k.
\end{equation*}
where,
\begin{equation*}
    D_u^- =
    \min \{ u_{k+1}^-(k+1)^s,\ldots,u_{n}n^s ,C_u^- \}, \quad
    D_u^+ = \max \{ u_{k+1}^+(k+1)^s,\ldots,u_{n}n^s,C_u^+ \}.
\end{equation*}
\end{lemma}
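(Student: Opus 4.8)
The plan is to verify the asserted inclusion index by index after passing to the rescaled quantities $i^s u_i$. Note that the conclusion is most naturally read as the polynomial-decay representation $u_i\in\frac{[D_u^-,D_u^+]}{i^s}$ valid for all $i>k$ (this is the form used elsewhere in Section~\ref{sec:algebra}, and is what makes the statement nontrivial, since a literal $u_i\in[D_u^-,D_u^+]$ fails for the decaying tail). Equivalently, it suffices to show that $i^s u_i\in[D_u^-,D_u^+]$ whenever $i>k$. Throughout I would use that $i^s>0$, so that multiplying an interval inclusion $u_i\in[a,b]$ by $i^s$ preserves the order of the endpoints and yields $i^s u_i\in[i^s a,\,i^s b]$.

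First I would fix an index $i$ in the explicit range $k<i\le n$. There the hypothesis gives $u_i\in[u_i^-,u_i^+]$, hence $i^s u_i\in[i^s u_i^-,\,i^s u_i^+]$. By construction the number $i^s u_i^-$ is one of the finitely many arguments of the minimum defining $D_u^-$, so $D_u^-\le i^s u_i^-$; symmetrically $i^s u_i^+\le D_u^+$. Thus $i^s u_i\in[D_u^-,D_u^+]$ for every such $i$. Next I would treat the tail indices $i>n$, where $u_i\in\frac{[C_u^-,C_u^+]}{i^s}$ means precisely $i^s u_i\in[C_u^-,C_u^+]$. Since both $C_u^-$ and $C_u^+$ are themselves among the arguments of the defining minimum and maximum, we have $D_u^-\le C_u^-$ and $C_u^+\le D_u^+$, and again $i^s u_i\in[D_u^-,D_u^+]$. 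The two ranges together exhaust all $i>k$, which completes the argument.

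I do not expect a genuine obstacle here: the statement is a bookkeeping device that lowers the number of explicitly stored modes from $n$ to $k$ by absorbing the intermediate coefficients $u_{k+1},\dots,u_n$ into a single polynomial-tail bound. The only points requiring care are the sign of the scaling factor --- because $i^s>0$ the inequalities do not reverse --- together with the elementary fact that a minimum (respectively maximum) over a finite list is bounded above (respectively below) by each of its entries, so that every endpoint produced by the two cases indeed lies inside $[D_u^-,D_u^+]$.
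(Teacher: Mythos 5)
Your proof is correct; the paper in fact omits the proof of this lemma entirely (it is one of the ``simple lemmas'' in Section~\ref{sec:algebra} whose proofs are skipped), and your argument---rescaling by $i^s>0$ and checking the two ranges $k<i\le n$ and $i>n$ separately against the entries defining $D_u^-$ and $D_u^+$---is precisely the elementary verification intended. You were also right to read the conclusion in the polynomial-decay form $u_i\in\frac{[D_u^-,D_u^+]}{i^s}$ for $i>k$ (and to read $u_n n^s$ as $u_n^{\pm}n^s$): as literally typeset the conclusion contains a typo, since the unscaled inclusion $u_i\in[D_u^-,D_u^+]$ can fail for the intermediate coefficients, exactly as you observe.
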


\begin{lemma}
Assume that sequence $\{u_i \}_{i=1}^\infty$ satisfies
\begin{equation*}
     u_i\in [u_i^-,u_i^+]\ \
     \text{ for }\ \ i\leq n\ \  \text{ and }  \ \
     u_i \in \frac{[C_u^-,C_u^+]}{i^s}\ \
     \text{ for  }\ \ i>k.
\end{equation*}
Then for $s_1<s$ there holds
\begin{equation*}
    u_i \in [D_u^-,D_u^+]\quad\text{for }i>n,
\end{equation*}
where,
\begin{equation*}
    D_u^- =
    \min \left\{ 0,\frac{C_u^-}{(n+1)^{s-s_1}} \right\}, \quad
    D_u^+ =
    \max \left\{0,\frac{C_u^+}{(n+1)^{s-s_1}} \right\}.
\end{equation*}
\end{lemma}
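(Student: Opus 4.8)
The plan is to reduce the claim to an elementary one–variable monotonicity estimate. Throughout I take $i>n$, and since the decay hypothesis is assumed for $i>k$ with $k\leq n$, the bound $u_i\in\frac{[C_u^-,C_u^+]}{i^s}$ is available on this range. The essential observation is that the decay rate factors as
\begin{equation*}
\frac{1}{i^s}=\frac{1}{i^{s-s_1}}\cdot\frac{1}{i^{s_1}},
\end{equation*}
so that certifying the new constants $D_u^\pm$ (and the re-expressed tail $u_i\in\frac{[D_u^-,D_u^+]}{i^{s_1}}$ for $i>n$) amounts to bounding the residual factor $i\mapsto i^{-(s-s_1)}$ uniformly over $i>n$. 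Concretely, I would first write the two inequalities contained in the hypothesis, $\frac{C_u^-}{i^s}\leq u_i\leq\frac{C_u^+}{i^s}$, multiply through by $i^{s_1}$, and thereby reduce the goal to showing $\frac{C_u^-}{i^{s-s_1}}\geq D_u^-$ and $\frac{C_u^+}{i^{s-s_1}}\leq D_u^+$ for all $i>n$.

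The key step is the sign-based case analysis, which is precisely what produces the $\min/\max$ with $0$ in the definitions of $D_u^\pm$. Because $s_1<s$, the map $i\mapsto i^{-(s-s_1)}$ is positive and strictly decreasing on $i\geq n+1$, with maximal value $(n+1)^{-(s-s_1)}$ at the smallest admissible index $i=n+1$ and infimum $0$ as $i\to\infty$. For the upper bound: if $C_u^+\geq 0$ then $\frac{C_u^+}{i^{s-s_1}}\leq\frac{C_u^+}{(n+1)^{s-s_1}}$, whereas if $C_u^+<0$ then $\frac{C_u^+}{i^{s-s_1}}<0$; in either case $\frac{C_u^+}{i^{s-s_1}}\leq\max\bigl\{0,\tfrac{C_u^+}{(n+1)^{s-s_1}}\bigr\}=D_u^+$. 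Symmetrically, for the lower bound: if $C_u^-<0$ then $\frac{C_u^-}{i^{s-s_1}}\geq\frac{C_u^-}{(n+1)^{s-s_1}}$, and if $C_u^-\geq 0$ then $\frac{C_u^-}{i^{s-s_1}}\geq 0$; in either case $\frac{C_u^-}{i^{s-s_1}}\geq\min\bigl\{0,\tfrac{C_u^-}{(n+1)^{s-s_1}}\bigr\}=D_u^-$. Combining the two bounds gives $u_i\in\frac{[D_u^-,D_u^+]}{i^{s_1}}$ for every $i>n$, as asserted.

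There is no genuine obstacle here; the argument is routine. The only point requiring care — and the reason the constants are expressed as a $\min/\max$ with $0$ rather than simply as $\frac{C_u^\pm}{(n+1)^{s-s_1}}$ — is the sign of the endpoints: when $C_u^+$ is negative (respectively $C_u^-$ is positive), the extremal value of $\frac{C_u^\pm}{i^{s-s_1}}$ over $i>n$ is the limiting value $0$ rather than the value at $i=n+1$. I would therefore make the strict monotonicity of $i\mapsto i^{-(s-s_1)}$ and this sign dichotomy explicit, exactly as above, and remark that the effect of the lemma is to trade the faster decay exponent $s$ for the slower one $s_1$ at the cost of enlarging the tail constants, which is the operation needed when reconciling the decay rates of the infinite interval vectors in Section~\ref{sec:Evolving set}.
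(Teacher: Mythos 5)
Your proof is correct: the paper states this lemma without proof (it is one of the ``simple lemmas'' whose verification is skipped), and your monotonicity-plus-sign-dichotomy argument is exactly the routine computation intended, correctly explaining why the constants are a $\min$/$\max$ with $0$ rather than simply $C_u^{\pm}/(n+1)^{s-s_1}$. You also rightly read the conclusion as the tail bound $u_i\in\frac{[D_u^-,D_u^+]}{i^{s_1}}$ for $i>n$ (the displayed conclusion drops the factor $i^{-s_1}$, evidently a typo, since otherwise the exponent $s-s_1$ in $D_u^{\pm}$ would be unmotivated), and correctly noted that the hypothesis stated for $i>k$ with $k\leq n$ covers the range $i>n$.
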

\begin{lemma}\label{lem:add}
Assume that sequences $\{u_i \}_{i=1}^\infty$ and $\{v_i \}_{i=1}^\infty$ satisfy

\begin{equation}
    u_i \in \frac{[C_u^-,C_u^+]}{i^{s_1}},\quad
    v_i \in \frac{[C_v^-,C_v^+]}{i^{s_2}}\quad\text{for $i>n$},
\end{equation}
with the constants $s_1,s_2.$ Then
\begin{itemize}
    \item if $s_1=s_2$ then for $i>n$ we have $u_i+v_i\in
    \frac{[C_v^- + C_v^-,C_u^+ + C_v^+]}{i^s},$
    \item if $s_1 <s_2$ then for $i>n$ we have
    $u_i+v_i\in
    \frac{[C_u^-,C_u^+] + \frac{[0,1]}{(n+1)^{s_2-s_1}}[C_v^-,C_v^+] }{i^s_1},$
    \item if $s_1 >s_2$ then for $i>n$ we have
    $u_i+v_i\in
    \frac{\frac{[0,1]}{(n+1)^{s_1-s_2}}[C_u^-,C_u^+] + [C_v^-,C_v^+] }{i^{s_2}}.$
\end{itemize}
\end{lemma}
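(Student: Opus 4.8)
The plan is to reduce all three cases to elementary interval arithmetic, once the two sequences are expressed with a common decay rate. First I would dispose of the balanced case $s_1=s_2=:s$: for $i>n$ the memberships $u_i\in\frac{[C_u^-,C_u^+]}{i^s}$ and $v_i\in\frac{[C_v^-,C_v^+]}{i^s}$ already carry the same power $i^{-s}$, so it suffices to add the two intervals endpoint by endpoint, using that the positive scalar $i^{-s}$ preserves the order of the endpoints and that $[a,b]+[c,d]=[a+c,b+d]$. This yields $u_i+v_i\in\frac{[C_u^-+C_v^-,\,C_u^++C_v^+]}{i^s}$.

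For the unequal cases the \emph{key observation} is a monotonicity bound on the residual power. Suppose $s_1<s_2$. I would factor out the slower decay $i^{-s_1}$ from the $v$-term, writing for $i>n$
\[
v_i\in\frac{[C_v^-,C_v^+]}{i^{s_2}}=\frac{1}{i^{s_1}}\cdot\frac{1}{i^{s_2-s_1}}[C_v^-,C_v^+].
\]
Since $s_2-s_1>0$ and $i\geq n+1$, the map $i\mapsto i^{-(s_2-s_1)}$ is positive and decreasing, so $\frac{1}{i^{s_2-s_1}}\in\left[0,\frac{1}{(n+1)^{s_2-s_1}}\right]=\frac{[0,1]}{(n+1)^{s_2-s_1}}$. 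Taking the interval product with $[C_v^-,C_v^+]$ gives $i^{s_1}v_i\in\frac{[0,1]}{(n+1)^{s_2-s_1}}[C_v^-,C_v^+]$, while $i^{s_1}u_i\in[C_u^-,C_u^+]$ by hypothesis; adding these two enclosures produces exactly the claimed membership. The case $s_1>s_2$ is identical after interchanging the roles of $u$ and $v$ and factoring out $i^{-s_2}$ instead.

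There is essentially no obstacle here; the lemma is a bookkeeping statement needed to implement addition of the tail representations \eqref{eq:representationInfiniteVectors}. The only point deserving a word of care is that the scalar interval $\frac{[0,1]}{(n+1)^{s_2-s_1}}$ is multiplied by the possibly sign-indefinite interval $[C_v^-,C_v^+]$ in the sense of interval arithmetic, so that the resulting enclosure remains valid whatever the signs of $C_v^-,C_v^+$ may be. This is automatic, because for each individual index $i>n$ the scalar $\frac{1}{i^{s_2-s_1}}$ lies in the fixed interval $\left[0,\frac{1}{(n+1)^{s_2-s_1}}\right]$, and the bound is uniform in $i$, which is precisely what the algorithm requires.
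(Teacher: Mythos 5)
Your proof is correct, and since the paper states this lemma without proof (it appears among the ``simple lemmas'' for implementing operations on infinite interval vectors, whose proofs are skipped), your argument --- factoring out the slower decay rate and enclosing the residual factor $\frac{1}{i^{s_2-s_1}}$ for $i\geq n+1$ in the uniform interval $\frac{[0,1]}{(n+1)^{s_2-s_1}}$ before taking the interval product with the possibly sign-indefinite $[C_v^-,C_v^+]$ --- is exactly the intended elementary bookkeeping. One remark: your balanced case yields $[C_u^-+C_v^-,\,C_u^++C_v^+]$, which silently corrects a misprint in the lemma as stated (the lower endpoint there reads $C_v^-+C_v^-$, and the denominator $i^s$ should read $i^{s_1}$ with $s_1=s_2$).
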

\begin{lemma}\label{lem:mult}
Assume that sequences $\{u_i \}_{i=1}^\infty$ and $\{v_i \}_{i=1}^\infty$ satisfy

\begin{equation}
    u_i \in \frac{[C_u^-,C_u^+]}{i^{s_1}}\quad \textrm{and}\quad
    v_i \in \frac{[C_v^-,C_v^+]}{i^{s_2}}\quad\text{for $i>n$},
\end{equation}
with some constants $s_1,s_2.$
Then for $i>n$ we have
$u_iv_i\in \frac{[C_u^-+C_v^-,C_u^+ + C_v^+]}{i^{s_1+s_2}}.$
\end{lemma}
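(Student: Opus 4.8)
The plan is a direct substitution argument, exactly parallel to the proof of the additive Lemma~\ref{lem:add}. I would fix an arbitrary index $i>n$ and unwind the hypotheses: the assumption $u_i\in[C_u^-,C_u^+]/i^{s_1}$ means there is a scalar $a\in[C_u^-,C_u^+]$ with $u_i=a\,i^{-s_1}$, and likewise $v_i=b\,i^{-s_2}$ for some $b\in[C_v^-,C_v^+]$. Multiplying the two representations and using the elementary power law $i^{s_1}i^{s_2}=i^{s_1+s_2}$ produces the single identity
\begin{equation*}
  u_iv_i=\frac{ab}{i^{s_1+s_2}}.
\end{equation*}
This already settles the decay exponent appearing in the conclusion, since the denominators of the two factors combine to give precisely $i^{s_1+s_2}$, and it reduces the whole lemma to controlling the numerator $ab$.

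Because $i^{-(s_1+s_2)}$ is a fixed positive scalar, multiplication by it preserves interval endpoints, so no estimate that depends on $i$ is needed beyond the power law above; it suffices to bound $ab$ as the pair $(a,b)$ ranges over the rectangle $[C_u^-,C_u^+]\times[C_v^-,C_v^+]$. The remaining task, and the only place any verification is genuinely required, is to confirm that the extreme values of this numerator are captured by the constants $C_u^-+C_v^-$ and $C_u^++C_v^+$ recorded in the statement, after which
\begin{equation*}
  u_iv_i\in\frac{[C_u^-+C_v^-,\;C_u^++C_v^+]}{i^{s_1+s_2}}\qquad\text{for every }i>n,
\end{equation*}
follows at once, the index $i$ having been arbitrary.

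I expect the substitution and the power-law step to be entirely routine, which is why the authors indicate the proof may be omitted; the lemma is simply the multiplicative companion of Lemma~\ref{lem:add}. The one point that warrants care, and therefore the closest thing to an obstacle, is the sign bookkeeping for the numerator: which combination of the four constants $C_u^\pm,C_v^\pm$ supplies the lower endpoint and which the upper depends on their signs, so this must be tracked uniformly to guarantee that the bounds displayed in the statement remain valid across every sign configuration. I would carry this out as a short finite case check on the vertices of the rectangle, since the numerator is affine in each variable separately and hence attains its extrema there.
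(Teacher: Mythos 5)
Your skeleton---writing $u_i=a\,i^{-s_1}$ with $a\in[C_u^-,C_u^+]$ and $v_i=b\,i^{-s_2}$ with $b\in[C_v^-,C_v^+]$, combining the powers via $i^{s_1}i^{s_2}=i^{s_1+s_2}$, and then bounding the bilinear numerator $ab$ by a vertex check on the rectangle---is the right and essentially only argument here; the paper itself states the lemma without proof as a routine fact of interval arithmetic, so there is no authorial proof to deviate from. Your vertex reasoning is also sound in itself: $ab$ is affine in each variable separately, so its extrema over a product of intervals occur at the four corners, and multiplying by the positive scalar $i^{-(s_1+s_2)}$ preserves interval endpoints.

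The genuine gap is in the final step, which you deferred rather than executed, and which in fact cannot be completed: the extrema of $ab$ over $[C_u^-,C_u^+]\times[C_v^-,C_v^+]$ are the minimum and maximum of the four \emph{products} $C_u^-C_v^-$, $C_u^-C_v^+$, $C_u^+C_v^-$, $C_u^+C_v^+$, and these are not the \emph{sums} $C_u^-+C_v^-$ and $C_u^++C_v^+$ displayed in the statement. The lemma as printed is false: taking $C_u^-=C_u^+=2$ and $C_v^-=C_v^+=3$ forces $u_iv_i=6\,i^{-(s_1+s_2)}$, which does not lie in $[5,5]/i^{s_1+s_2}$. The stated endpoints are evidently a slip carried over from the additive Lemma~\ref{lem:add}, where sums are correct; the intended conclusion, and the one actually needed when the algorithm multiplies coefficients by eigenvalues of $L$, is the interval product
\begin{equation*}
u_iv_i\in \frac{\left[\min\{C_u^-C_v^-,\,C_u^-C_v^+,\,C_u^+C_v^-,\,C_u^+C_v^+\},\ \max\{C_u^-C_v^-,\,C_u^-C_v^+,\,C_u^+C_v^-,\,C_u^+C_v^+\}\right]}{i^{s_1+s_2}}.
\end{equation*}
So your proposal is the correct skeleton, but the ``short finite case check on the vertices'' you postponed is precisely where the claim breaks: carried out honestly, it refutes the printed endpoints and yields the corrected interval above, rather than confirming the statement as you anticipated.
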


\printbibliography
\end{document}